\newcommand{\F}{\mathbb{F}}
\newcommand{\C}{\mathbb{C}}
\newcommand{\rad}{\mathrm{rad}}
\newcommand{\Fun}{\mathrm{Fun}}
\newcommand{\End}{\mathrm{End}}
\numberwithin{equation}{subsection}
\newtheorem{theorem}{Theorem}[subsection]
\newtheorem{lemma}[theorem]{Lemma}
\newtheorem{proposition}[theorem]{Proposition}
\newtheorem{corollary}[theorem]{Corollary}
\newtheorem{example}[theorem]{Example}
\theoremstyle{definition}
\newtheorem{definition}[theorem]{Definition}
\newtheorem{remark}[theorem]{Remark}
\newcommand{\Comp}{\mathrm{Cpl}}
\title{Boundary Calculus, Rigidity Islands, and Deformation Theory in Algebraic Phase Structures}
\author{
Joe Gildea\\
Department of Computing Science and Mathematics,\\
School of Informatics and Creative Arts,\\
Dundalk Institute of Technology\\
\texttt{gildeajoe@gmail.com}}
\date{}
\begin{document}
\maketitle

\begin{abstract}
We develop a general boundary calculus for algebraic phases and use it to
formulate an intrinsic structural framework for deformation and obstruction
phenomena. Structural boundaries are shown to be finitely detectable and
canonically stratified by failure type and depth. For each boundary we
construct a canonical boundary exact sequence and identify a maximal rigid
subphase, called a rigidity island, that persists beyond global
boundary failure.

Rigidity islands are organised by intrinsic invariants and serve as canonical
base points for deformation theory. Deformation behaviour within the standing
admissibility framework is governed by boundary quotients, while rigidity
islands remain stable under admissible deformation. Boundary quotients act as
obstruction objects whose associated strata organise higher-depth deformation
behaviour.

As a consequence, deformation behaviour is naturally stratified by boundary
depth and failure type, while formal smoothness is associated with the
vanishing of boundary data. The resulting moduli behaviour is organised by
rigidity islands together with their associated obstruction patterns, without
requiring intrinsic analytic or continuous deformation parameters.
\end{abstract}

\medskip

\medskip
\noindent\textbf{Mathematics Subject Classification (2020).}
Primary: 18D10, 16D90; Secondary: 20C05, 46L05, 81P70.

\medskip
\noindent\textbf{Keywords.}
Algebraic Phase Theory, structural boundaries, rigidity islands,
boundary calculus, deformation theory, obstruction theory,
filtered representations, defect stratification.

\section{Introduction}

Algebraic Phase Theory (APT) provides a structural framework for extracting,
organizing, and classifying algebraic data arising from interaction laws and
defect propagation. Rather than beginning with a fixed algebraic object, APT
starts from admissible interaction data and studies how canonical algebraic
structure, defect, filtration, and finite termination emerge intrinsically.

Earlier developments in the theory
\cite{GildeaAPT1,GildeaAPT2,GildeaAPT3,GildeaAPT4} revealed a hierarchical
structural picture. Interaction generates defect data; defect induces a
canonical filtration; termination leads to structural boundaries; and, in the
strongly admissible regime, many apparent distinctions between models collapse
under intrinsic phase equivalence and Morita-type equivalence. These phenomena
appear across radical, representation-theoretic, and quantum coding examples
developed in the earlier papers of the series.

What has remained less understood is the internal structure of boundaries and
their role in deformation and obstruction phenomena. Earlier work identifies
where boundaries occur and establishes their invariance properties, but does
not systematically analyse how boundaries are detected intrinsically at finite
depth, how they decompose into canonical strata, or how they organise
rigidity, obstruction, deformation, and moduli behaviour.

The purpose of this paper is to develop such a framework. We introduce a
boundary calculus in which structural boundaries are detected and stratified,
and we study how these strata organise structural variation in terminating
algebraic phases. A central theme is that boundaries do not simply represent a
loss of control. Instead, boundary formation reorganises surviving algebraic
structure into rigidity islands that are intrinsic, functorial within the
standing admissibility framework, and independent of presentation. These rigid
components provide canonical anchors for reconstruction and deformation theory.

This perspective differs substantially from classical deformation frameworks,
including Gerstenhaber deformation of algebras, Kodaira--Spencer theory,
tensor categorical deformation, and Hopf algebraic approaches
(cf.~\cite{AndersonFuller1992Rings,Faith1973Algebra,Lam2001FirstCourse,
EtingofGelaki2015Tensor,ConnesKreimer1998Hopf}). Classical theories typically
rely on linearisation, cohomology, or analytic or formal parameters, whereas
deformation behaviour in APT is organised through boundary strata and finite
boundary depth. Obstruction phenomena arise through the interaction between
boundary layers and defect propagation, and the resulting deformation
behaviour is naturally stratified by boundary type and depth.

The structural behaviour identified here places APT in dialogue with classical
rigidity and representation-theoretic frameworks
(cf.~\cite{Jacobson1956Structure,Mackey1958Induced,Serre,Howe1979Heisenberg,
CurtisReiner,Weil1964SurCertains,Honold2001QuasiFrobenius,Wood1999Duality}),
while remaining independent of additive, analytic, or cohomological
foundations. In particular, the boundary calculus developed here connects the
invariance and rigidity phenomena established earlier in the theory with the
reconstruction and duality questions developed in subsequent work
\cite{GildeaAPT6}.

Taken together, these results motivate the view that structural boundaries act
as intrinsic horizons for canonical propagation: beyond them, global
functorial control may fail, while local rigid substructures continue to
persist in a controlled and intrinsically detectable way. The boundary
calculus developed in this paper provides a framework for organising the
associated obstruction, deformation, and moduli phenomena in Algebraic Phase
Theory.

\section{Standing Framework and Structural Assumptions}

This paper is purely structural. We do not repeat the analytic or pre-algebraic
extraction procedures of Algebraic Phase Theory developed in Paper~I
\cite{GildeaAPT1}. Instead, we fix the minimal internal framework assumed
throughout and work entirely within the structural consequences of those assumptions.

An \emph{algebraic phase} is a structured object $(\mathcal P,\circ)$ encoding
a specified family of interaction operations. All defect, filtration, rigidity, and complexity data are not additional
structure; they are induced functorially from $(\mathcal P,\circ)$ within the
standing admissibility assumptions. Throughout the paper, all
phases are assumed to satisfy the axioms of Algebraic Phase Theory, and we
recall only their structural content.

Two admissibility regimes play a central role. An algebraic phase is
\emph{strongly admissible} if all defect propagation and higher interaction
constraints determined by $(\mathcal P,\circ)$ are forced functorially at every
filtration depth. It is \emph{weakly admissible} if defect detection and
canonical filtration are functorially determined at finite depth, but higher
propagation may fail to be canonical beyond a controlled threshold. Strong admissibility yields rigidity throughout the controlled defect regime, whereas weak admissibility permits
boundary behavior beyond defect generation.

Phase interaction determines a functorial notion of defect, which measures
deviation from rigidity. Defect induces a canonical filtration measuring
structural complexity, and this filtration always terminates after finitely
many steps. Under the standing admissibility assumptions, the theory admits a canonical
descending sequence
\[
\mathcal P = \mathcal P^{(0)} \supseteq \mathcal P^{(1)} \supseteq
\mathcal P^{(2)} \supseteq \cdots,
\]
a distinguished rigid core $\mathcal P_{\mathrm{rig}}$, and intrinsic boundary
quotients associated with the breakdown of controlled extension beyond rigidity.
All structural consequences used in this paper depend only on these intrinsic
features and not on the manner in which the phase was originally extracted or
realized.

The \emph{defect rank} $d(\mathcal P)$ is the maximal defect degree required to
generate all non-rigid behavior of $\mathcal P$ under its interaction
operations. It controls the point at which defect propagation ceases to
determine structure functorially.

Unlike Papers~I–III \cite{GildeaAPT1,GildeaAPT2,GildeaAPT3}, no admissible
phase data, analytic realizations, or operator models are constructed here.
Unlike Paper~IV \cite{GildeaAPT4}, the emphasis is not on categorical
organization. The focus of the present work is the intrinsic calculus of structural
boundaries, the existence and classification of rigidity islands, and the
deformation and obstruction theory associated with boundary data.
The constructions and invariants considered in this paper are formulated to be
compatible with intrinsic phase equivalence and Morita-type relations.

\section{Structural Boundaries Revisited}

This section develops the notion of structural boundary and analyzes
its relationship to defect generation, termination, and intrinsic
invariance. We begin by introducing boundary depth and comparing it
to defect rank.

\subsection{Boundary Depth and Defect Generation}

Structural boundaries are the central organizing principle of the present
paper. They mark the precise points at which the axiomatic mechanisms of Algebraic
Phase Theory cease to propagate structure functorially.
Rather than representing pathological failure, boundaries are intrinsic,
detectable, and rigid features of phase interaction.

Conceptually, a boundary is not an external obstruction imposed on a phase,
but an internal limit associated with defect-induced complexity.
Beyond this limit, canonical extension is no longer possible, and new
phenomena such as rigidity islands, obstruction objects, and controlled
deformation may emerge. We now formalize this notion.

\begin{definition}
Let $\mathcal P$ be a terminating algebraic phase with canonical filtration
\[
\mathcal P=\mathcal P^{(0)} \supseteq \mathcal P^{(1)} \supseteq \cdots
\supseteq \mathcal P^{(L)}=0.
\]
A \emph{structural boundary} occurs at filtration depth $k$ if:
\begin{itemize}
  \item $\mathcal P^{(k)}$ is the minimal stratum at which at least one axiom of
  Algebraic Phase Theory fails to propagate functorially, and
  \item all strata $\mathcal P^{(i)}$ for $i<k$ satisfy the axioms fully.
\end{itemize}
The integer $k$ is called the \emph{boundary depth}.
\end{definition}

\begin{proposition}
Let $\mathcal P$ be a terminating algebraic phase with defect rank $d$ and
boundary depth $k$.
Then necessarily
\[
k \ge d.
\]
Moreover, if $k>d$, one may write
\[
k = d + k_{\mathrm{ext}},
\qquad k_{\mathrm{ext}} \ge 0,
\]
where $k_{\mathrm{ext}}$ is the \emph{weak extension depth}: the number of
filtration steps beyond defect generation for which canonical propagation of
higher interaction constraints remains functorial before the first failure.
\end{proposition}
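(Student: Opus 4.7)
The plan is to separate the statement into its two logical parts: first establish the inequality $k \geq d$ from the axiomatic roles played by defect rank and boundary depth, then obtain the decomposition $k = d + k_{\mathrm{ext}}$ essentially by definition once the inequality is in hand. Both parts rely only on the structural content recalled in Section~2, not on any external analytic input.

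For the inequality, I would argue by contradiction. Suppose $k < d$. By the characterization of defect rank recalled in Section~2, $d$ is the maximal defect degree required to generate all non-rigid behavior of $\mathcal{P}$ under its interaction operations; so the defect-generation axiom, together with the canonical filtration axiom it drives, must act functorially at each depth $0, 1, \ldots, d$, producing the strata $\mathcal{P}^{(i)}$ from those preceding them. On the other hand, the definition of boundary depth says that at depth $k$ at least one APT axiom fails to propagate functorially. If $k < d$, this failure would prevent the canonical construction of the strata $\mathcal{P}^{(k+1)}, \ldots, \mathcal{P}^{(d)}$ needed to realize defect degree $d$, contradicting that $\mathcal{P}$ has defect rank $d$. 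Hence $k \geq d$.

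For the decomposition, assuming $k \geq d$, I would set $k_{\mathrm{ext}} := k - d \geq 0$ and verify that this quantity has the interpretation claimed in the statement. The axioms that can still propagate above defect rank $d$ are precisely the higher interaction constraints of the weakly admissible regime, since all defect generation has been completed. By minimality of $k$, these higher constraints propagate functorially at depths $d, d+1, \ldots, k-1$ and first fail to do so at depth $k$; this gives exactly $k - d$ functorial extension steps beyond defect generation, matching the stated description of $k_{\mathrm{ext}}$. The main obstacle is the first step: making rigorous the claim that a failure at some $i < d$ genuinely blocks the construction of defect degree $d$. The framework of Section~2 states the necessary content but does not spell out this incompatibility, so one has to argue explicitly that defect rank is only well-defined in the presence of functorial propagation of all prior APT axioms. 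Once this presupposition is made precise, the inequality, and hence the decomposition, follow.
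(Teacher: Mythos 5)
Your proof follows essentially the same route as the paper's: the inequality $k \geq d$ comes from observing that boundary failure cannot precede completion of defect generation, and $k_{\mathrm{ext}} := k - d$ is then read off directly from the definitions. Your framing by contradiction is logically equivalent to the paper's direct statement, and your closing observation — that the argument implicitly presupposes defect rank is only meaningful where the prior axioms propagate functorially — correctly identifies a step the paper also treats at the same informal level.
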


\begin{proof}
Let
\[
\mathcal P=\mathcal P^{(0)} \supseteq \mathcal P^{(1)} \supseteq \cdots
\supseteq \mathcal P^{(L)}=0
\]
be the canonical terminating filtration.

By definition of \emph{defect rank} $d=d(\mathcal P)$, all non-rigid behavior of
$\mathcal P$ is generated by defect data of degree at most $d$, and $d$ is
minimal with this property.

By definition of \emph{boundary depth} $k=k(\mathcal P)$, all axioms of Algebraic
Phase Theory propagate functorially on $\mathcal P^{(i)}$ for $i<k$, and at least
one axiom fails to propagate functorially at depth $k$.

Since defect generation up to degree $d$ is required to account for all non-rigid
behavior, boundary formation cannot occur strictly before defect generation.
Hence $k\ge d$.

If $k>d$, then defect generation completes by depth $d$ and, by minimality of $k$,
all axioms continue to propagate functorially on each stratum $\mathcal P^{(d+j)}$
for $0\le j<k-d$ before failing for the first time at depth $k$. In this case set
\[
k_{\mathrm{ext}}:=k-d,
\]
so $k_{\mathrm{ext}}\ge 0$ and $k=d+k_{\mathrm{ext}}$.
\end{proof}

\begin{corollary}
If $\mathcal P$ is strongly admissible, then $k_{\mathrm{ext}}(\mathcal P)=0$ and hence
\[
k(\mathcal P)=d(\mathcal P).
\]
\end{corollary}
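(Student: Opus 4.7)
The plan is to deduce the corollary directly from the preceding proposition together with the definition of strong admissibility. The proposition already supplies the decomposition $k = d + k_{\mathrm{ext}}$ with $k_{\mathrm{ext}} \ge 0$, so proving $k = d$ is equivalent to proving $k_{\mathrm{ext}} = 0$. The entire argument therefore reduces to identifying the weak extension depth $k_{\mathrm{ext}}$ as precisely the phenomenon that the strong admissibility hypothesis rules out.

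To show $k_{\mathrm{ext}} = 0$, I would argue by contradiction. Assume instead that $k_{\mathrm{ext}} \ge 1$, so that $k = d + k_{\mathrm{ext}} > d$. By the definition of boundary depth, at the stratum $\mathcal P^{(k)}$ at least one axiom of Algebraic Phase Theory fails to propagate functorially, since $k$ is the minimal depth at which such a failure occurs. But strong admissibility asserts that all defect propagation and higher interaction constraints determined by $(\mathcal P,\circ)$ are forced functorially at \emph{every} filtration depth, including depth $k$. This is the sought contradiction, so $k_{\mathrm{ext}} = 0$, and therefore $k(\mathcal P) = d(\mathcal P) + 0 = d(\mathcal P)$.

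The hard part, to the extent there is one, is conceptual rather than technical: one must check that the family of axioms whose first failure defines the boundary depth coincides with the family of propagation constraints whose functoriality is mandated by strong admissibility. This identification is built into the standing framework of Section~2, in which strong admissibility is formulated precisely in terms of the same interaction constraints whose breakdown marks the boundary. Once this is noted, no further obstruction appears; the corollary is essentially a rigidity statement pinning the boundary to the defect rank, obtained by unpacking the two definitions through the decomposition already established in the proposition.
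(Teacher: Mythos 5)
Your argument by contradiction has a real logical gap: the contradiction you derive never actually uses the hypothesis $k_{\mathrm{ext}}\ge 1$. You observe that at depth $k$ an axiom fails (this is true by definition of boundary depth, regardless of whether $k>d$ or $k=d$), and you observe that, under the literal reading of strong admissibility you quote, no axiom can fail at any depth. Those two facts already clash, so the argument as written actually ``proves'' that a strongly admissible phase has no boundary depth at all. But that conclusion is inconsistent with the paper's own Theorem that \emph{every} terminating algebraic phase has a structural boundary at finite depth, and with the paper's repeated assertion that strongly admissible phases form boundaries precisely at depth $k=d$. So under your reading of the hypothesis, either the auxiliary assumption $k_{\mathrm{ext}}\ge 1$ is doing nothing (and your argument proves too much), or the definitions are inconsistent.

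You actually flag the issue yourself in your last paragraph, calling it the ``hard part'' and saying one must check that the propagation constraints whose functoriality strong admissibility mandates coincide with the constraints whose first failure marks the boundary. You then assert that ``once this is noted, no further obstruction appears,'' but that is exactly the step that needs doing and cannot be waved through, because under the literal matching you propose, the two requirements are incompatible at depth $k$ no matter what $k$ is. The paper's proof avoids this by interpreting strong admissibility more narrowly: it reads strong admissibility as ruling out failure of canonical \emph{higher} propagation strictly beyond the defect generation horizon, i.e.\ at depths strictly greater than $d$. With that reading, the assumption $k>d$ genuinely matters: strong admissibility forbids failures at depths $>d$, and $k>d$ places a failure at such a depth, so the contradiction uses the hypothesis. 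Your proposal should be revised to adopt this narrower reading (that strong admissibility constrains only propagation beyond depth $d$), which restores the logical dependence on $k_{\mathrm{ext}}\ge 1$ and aligns with the paper.
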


\begin{proof}
If $\mathcal P$ is strongly admissible, then canonical higher propagation does
not fail strictly beyond the defect generation horizon. Since boundary formation
cannot occur strictly before defect generation, one has $k\ge d$. If $k>d$ then,
by definition, there would be a nontrivial weak extension interval beyond defect
generation, contradicting strong admissibility. Hence $k=d$ and therefore
$k_{\mathrm{ext}}=0$.
\end{proof}

\begin{remark}
The failure detected at a boundary need not be total.
Typically, only one mechanism, such as canonical extension, filtration
stability, or termination, fails, while others persist locally.
This controlled failure is what makes a boundary calculable rather than
destructive.
\end{remark}

\begin{theorem}
Every terminating algebraic phase admits a well-defined structural boundary
depth.
\end{theorem}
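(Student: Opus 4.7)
The plan is to extract the boundary directly from the termination axiom together with well-ordering of the nonnegative integers. Fix the canonical terminating filtration
\[
\mathcal P=\mathcal P^{(0)} \supseteq \mathcal P^{(1)} \supseteq \cdots \supseteq \mathcal P^{(L)}=0,
\]
whose length $L$ is finite by the termination axiom. Let $S\subseteq\{0,1,\dots,L\}$ denote the set of depths at which at least one APT axiom fails to propagate functorially on the corresponding stratum. Once $S$ is shown to be non-empty, $k=\min S$ is a well-defined nonnegative integer, bounded above by $L$, and it is by construction the structural boundary depth. The theorem therefore reduces to establishing $S\neq\emptyset$.

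I would prove non-emptiness by contradiction. Assume every APT axiom, in particular canonical extension, defect propagation, and rigidity preservation, propagates functorially on every stratum. Then canonical extension applied to any nonzero $\mathcal P^{(i)}$ with $i<L$ produces a nonzero successor stratum determined functorially from the interaction data, and iterating this step from $\mathcal P^{(0)}=\mathcal P\neq 0$ forces $\mathcal P^{(L)}\neq 0$. This contradicts termination at depth $L$. Hence at least one axiom must fail to propagate functorially at some depth no greater than $L$, so $S$ is non-empty and $k=\min S$ exists and is finite.

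To guard against edge cases, I would cross-check against the preceding proposition. In the non-rigid regime, the lower bound $k\ge d(\mathcal P)$ forces the first failure to sit at a nontrivial stratum; in the strongly admissible or rigid regime the corollary identifies $k$ with $d$, and the bound $k\le L$ remains intact. Combined with the contradiction argument above, this yields $d\le k\le L<\infty$ in all cases, which is precisely the claim.

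The main obstacle I anticipate is conceptual rather than combinatorial: pinning down ``failure of functorial propagation'' precisely enough that the collapse of structure driving termination is genuinely recorded as such a failure, rather than dismissed as a trivial consequence of already having reached the zero stratum. I would handle this by using termination as the externally visible fingerprint of the earliest internal failure of canonical extension, and by invoking the minimality clause in the definition of boundary depth to extract a canonical witness among all candidate failure depths.
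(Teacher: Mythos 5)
Your proof takes the same overall route as the paper: fix the canonical terminating filtration with $\mathcal P^{(L)}=0$, assume for contradiction that every axiom propagates functorially at every depth, derive a contradiction with termination, and then extract the boundary depth as the minimum of the (now nonempty) set of failure depths. The difference is in how the contradiction is obtained. The paper argues \emph{backwards from $L$}: since $\mathcal P^{(L)}=0$, there is no nontrivial target for canonical extension beyond depth $L$, so canonical extension is declared to fail at $L$. You instead argue \emph{forward from $0$}: canonical extension applied to a nonzero stratum must produce a nonzero successor, so iterating from $\mathcal P^{(0)}\neq 0$ would force $\mathcal P^{(L)}\neq 0$.

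The forward version rests on a claim the paper never makes and which is not obviously warranted: that functorial canonical extension of a nonzero stratum yields a nonzero stratum. The successor strata $\mathcal P^{(i+1)}$ are not \emph{produced} by canonical extension; they are given a priori as the defect-induced filtration, and the termination axiom already dictates that they descend to zero. What is being extended across the step $\mathcal P^{(i)}\supseteq\mathcal P^{(i+1)}$ is interaction and constraint data, not the stratum itself, so there is no built-in reason a nonzero stratum cannot functorially shrink to zero. This is precisely why the paper has to append the rigid-case convention $k=0$: in a phase with no genuine failure, the filtration still reaches zero. Your cross-check paragraph gestures at this (by quoting $k\ge d$ and the strong-admissibility corollary) but does not actually repair the step. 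In short: same plan, but your central inductive claim needs either to be stated as an explicit axiom or to be replaced by the paper's terminal observation that the zero stratum admits no nontrivial extension, together with the degenerate $k=0$ convention.
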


\begin{proof}
Let $\mathcal P$ be a terminating algebraic phase with canonical defect-induced
filtration
\[
\mathcal P=\mathcal P^{(0)} \supseteq \mathcal P^{(1)} \supseteq \cdots
\supseteq \mathcal P^{(L)}=0.
\]

By Axiom~V (Finite Termination), the filtration stabilizes after finitely many
steps, that is, there exists a finite index $L$ such that
$\mathcal P^{(i)}=\mathcal P^{(L)}$ for all $i\ge L$.
Suppose, for the sake of contradiction, that all axioms of Algebraic Phase Theory
propagate functorially on every stratum $\mathcal P^{(i)}$ for $i\ge 0$.
Then for each $i<L$, the interaction data and higher propagation constraints
determined by $(\mathcal P,\circ)$ admit a canonical, functorial extension from
$\mathcal P^{(i)}$ to $\mathcal P^{(i+1)}$.

However, since $\mathcal P^{(L)}=0$, no further nontrivial extension beyond
depth $L$ exists. Thus canonical extension ceases to produce nontrivial higher propagation data at depth $L$,
contradicting the assumption of functorial propagation at all depths.

Therefore there exists a minimal index $k\ge 0$ such that at least one axiom of
Algebraic Phase Theory fails to propagate functorially on $\mathcal P^{(k)}$.
By definition, this index $k$ is the structural boundary depth of $\mathcal P$.

In the rigid case, where all structure propagates canonically without defect,
we set $k=0$ by convention, so the statement includes this degenerate situation.
\end{proof}

\begin{remark}
In the strongly admissible regime, boundary formation is exhausted by defect
generation and occurs at depth $k=d$.
In the weakly admissible regime, boundary depth may satisfy $k>d$, reflecting
failure of canonical higher propagation beyond defect generation rather than
the appearance of new defect.
\end{remark}

\begin{proposition}
Structural boundary depth is invariant under intrinsic phase equivalence and
under completion procedures compatible with the canonical defect filtration.
\end{proposition}

\begin{proof}
Minimality follows directly from the definition: the boundary depth of a phase
is defined to be the \emph{smallest} filtration index at which at least one axiom
of Algebraic Phase Theory fails to propagate functorially. By construction, no
failure can occur at any strictly smaller depth.

Let $\mathcal P$ and $\mathcal Q$ be intrinsically equivalent algebraic phases.
By the categorical rigidity results of Algebraic Phase Theory~IV, there exists
an equivalence
\[
F \colon \mathcal P \;\simeq\; \mathcal Q
\]
preserving the intrinsic structural data relevant to defect propagation and the
canonical filtration.
In particular, $F$ identifies the canonical defect-induced filtrations
$F(\mathcal P^{(i)}) \cong \mathcal Q^{(i)}$ for all $i$, preserves defect
degrees and defect strata, preserves termination length, and is compatible with
the functorial propagation properties at each filtration level.

Since boundary depth is defined as the minimal index $k$ at which functorial
propagation fails, and since this property is preserved under $F$, the minimal
index at which such failure occurs coincides for $\mathcal P$ and
$\mathcal Q$.
Therefore
\[
k(\mathcal P)=k(\mathcal Q),
\]
and boundary depth is invariant under phase equivalence.

Now let
\[
\Comp \colon \mathcal P \longrightarrow \Comp(\mathcal P)
\]
denote a completion procedure compatible with the canonical defect filtration.
By the structural results of Algebraic Phase Theory, $\Comp$ preserves defect
generation, termination, and the canonical defect-induced filtration.
In particular, the boundary depth associated with the defect structure of
$\mathcal P$ is preserved under completion.

Hence
\[
k(\Comp(\mathcal P)) = k(\mathcal P).
\]

Therefore boundary depth is an intrinsic invariant under phase equivalence and
compatible completion procedures.
\end{proof}

\begin{theorem}
Structural boundary depth is preserved under intrinsic localization procedures
compatible with phase equivalence and the canonical defect filtration.
\end{theorem}

\begin{proof}
Let
\[
L \colon \mathcal P \longrightarrow L(\mathcal P)
\]
be an intrinsic localization compatible with phase equivalence.
By definition of admissible localization in Algebraic Phase Theory, $L$
factors as a composition of phase equivalence and intrinsic completion.
Both operations preserve defect degrees, termination, and the canonical
defect-induced filtration.

In particular, the boundary depth associated with
$\mathcal P^{(i)}$ is preserved under passage to
$L(\mathcal P)^{(i)}$ for each filtration index $i$.
Since boundary depth is defined as the minimal index at which functorial
propagation fails, it follows that
\[
k(L(\mathcal P)) = k(\mathcal P).
\]
Thus structural boundary depth is preserved under intrinsic localization.
\end{proof}

\begin{remark}
Boundaries should be viewed as \emph{structural horizons} rather than defects.
They mark the point at which global algebraic control ends and where local
rigid phenomena become structurally significant.
The remainder of this paper develops the calculus governing this transition.
\end{remark}

\section{Boundary Failure and Detection}

This section brings together the structural analysis of how boundaries arise
and the intrinsic mechanisms by which they can be detected.
We begin with a description of the canonical failure modes through which an
algebraic phase can encounter a boundary, before turning to finite,
functorially defined invariants that expose such failures from within the phase.

\subsection{Types of Boundary Failure}

Structural boundaries do not arise from arbitrary breakdowns of algebraic
behaviour.
Once admissibility, functorial defect structure, and finite termination are
assumed, the axioms of Algebraic Phase Theory strongly constrain \emph{how}
failure can occur.
The purpose of this section is to show that boundary phenomena admit a finite,
intrinsic stratification by failure mode, and that this stratification is
intrinsic to the phase rather than dependent on presentation or realization.

Conceptually, this section shifts the role of boundaries from static invariants
(as established in Paper~IV) to \emph{typed failure horizons} associated with
subsequent rigidity, obstruction, and deformation phenomena.
Boundary failure occurs in the following controlled forms:
\begin{itemize}
  \item \emph{Loss of functoriality}: pullback or morphism-induced maps fail to
        preserve defect or filtration strata.

  \item \emph{Loss of canonical termination}: beyond the boundary, there is no
        longer a functorial or presentation-independent way to identify
        post-boundary strata as continuations of the defect-generated
        filtration, even when terminating filtrations exist.

  \item \emph{Filtration instability}: canonical filtrations fail to be
        well-defined or functorial.

  \item \emph{Loss of rigidity without collapse}: defect vanishes locally but
        fails to propagate globally, producing persistent rigid substructures.
\end{itemize}

Each failure mode is associated with the breakdown of a specific APT axiom and
is therefore structural rather than representational in nature.

\begin{theorem}
Let $\mathcal P$ be a terminating algebraic phase with boundary depth $k$.
Then the boundary at depth $k$ admits a canonical stratification
\[
\partial\mathcal P
\;=\;
\bigsqcup_{i=1}^m \partial_i\mathcal P,
\]
where each stratum $\partial_i\mathcal P$ corresponds to a minimal failure mode
of one of the axioms of Algebraic Phase Theory together with its associated
failure depth.

This stratification is compatible with phase morphisms and invariant under
phase equivalence.
\end{theorem}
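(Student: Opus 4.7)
The plan is to index the stratification by the finite set of Algebraic Phase Theory axioms whose functorial propagation governs passage between filtration strata, and to assign each component of the boundary data to the unique axiom whose failure first forces that component to appear at depth $k$. The first step is to fix a canonical enumeration $\{A_1,\dots,A_N\}$ of these axioms. By the classification of boundary failure modes established immediately above (loss of functoriality, loss of canonical termination, filtration instability, and loss of rigidity without collapse), this list is finite and intrinsic to APT rather than to any presentation of $\mathcal P$. For each $j$, I would define $\partial_j\mathcal P$ as the sub-object of the boundary data at depth $k$ on which $A_j$ is the earliest axiom, in the canonical dependency order of APT, whose functorial propagation breaks.

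Next I would verify that $\partial\mathcal P=\bigsqcup_{j}\partial_j\mathcal P$ with $m\le N$. Exhaustiveness is immediate from the definition of boundary depth: every component of $\partial\mathcal P$ arises from the failure of at least one axiom at depth $k$, so it is labelled by the earliest such failure. Finiteness is automatic since the axiom list is finite. Disjointness requires showing that if $A_j$ and $A_{j'}$ both appeared as minimal failing axioms at the same component, then one of them would be forced by a strictly earlier breakdown; but any such earlier breakdown would contradict the minimality of $k$ established in the uniqueness proposition for boundary depth. Hence the labelling $\xi\mapsto A_{j(\xi)}$ is single-valued.

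For functoriality under phase morphisms and invariance under phase equivalence I would invoke the preservation results already in play. A phase morphism preserves the canonical filtration, defect degrees, and the functorial-propagation-or-failure data of each axiom, so the label $j(\xi)$ is transported along morphisms and each $\partial_j\mathcal P$ is mapped into $\partial_j\mathcal Q$. For equivalences, the categorical rigidity results of Paper~IV provide invertibility of these identifications, so the stratification is carried bijectively.

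The main obstacle, in my view, is the disjointness argument. It depends on having a genuinely intrinsic dependency ordering among the APT axioms so that ``minimally failing axiom'' is a canonical attribute of a boundary component rather than an artefact of how the axioms are listed. Two distinct admissible orderings of the axioms could in principle induce different labellings, and canonicity of the whole stratification rests on ruling this out. I would handle this by appealing to the hierarchical axiom dependency fixed in Papers~I and~II, in which each axiom has a well-defined logical precedence (functoriality precedes filtration stability, which precedes canonical termination, which precedes rigidity propagation), and by observing that any other ordering consistent with these implications assigns the same minimal failing axiom to each boundary component.
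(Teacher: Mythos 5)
Your proof follows the same overall skeleton as the paper's — index the strata by the finite set of APT axioms, define $\partial_j\mathcal P$ by which axiom fails at depth $k$, and obtain functoriality and equivalence-invariance from preservation of the truncated filtration and of the axiom-by-axiom propagation data. The genuine divergence is in the disjointness step. The paper asserts that the APT axioms are \emph{logically independent}, so that failure of each is detected independently and each boundary datum witnesses the failure of exactly one axiom; disjointness and exhaustion are read off directly from this independence, with no ordering on the axioms invoked at all. You instead allow the possibility that two axioms $A_j$, $A_{j'}$ fail simultaneously on the same boundary component and break the tie with a canonical \emph{dependency ordering} (functoriality $\prec$ filtration stability $\prec$ termination $\prec$ rigidity propagation), which you then must argue is intrinsic rather than presentational. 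This buys you robustness against simultaneous failures — a case the paper's ``unique axiom that fails first'' phrasing glosses over — at the cost of importing an ordering whose canonicity is an extra hypothesis. You correctly flag this as the weak point. Note also that your stated disjointness argument as written doesn't quite close: observing that neither $A_j$ nor $A_{j'}$ can be forced by an earlier breakdown (minimality of $k$) does not by itself make the label single-valued; it is only the dependency order, invoked afterward, that does the tie-breaking. The paper avoids this entirely by not permitting ties. Both routes are defensible given the axiomatic framing, but they are not the same argument.
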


\begin{proof}
Let $k$ denote the boundary depth of $\mathcal P$, i.e.\ the minimal filtration
index at which at least one axiom of Algebraic Phase Theory fails functorially.

Each axiom of Algebraic Phase Theory governs a structurally distinct property of
a phase.
Consequently, failure of the axioms can be detected separately, and at each
filtration depth one may ask which axioms continue to hold functorially.

By minimality of the boundary depth, all axioms hold functorially at depths
$i<k$, and at least one axiom fails at depth $k$.
Moreover, no axiom can fail at a depth strictly smaller than $k$.
If multiple axioms fail, the boundary depth is determined by the minimal depth
at which any axiom fails.

It follows that each boundary phenomenon at depth $k$ is associated with a
minimal axiom failure at that depth.
We therefore define $\partial_i \mathcal P$ to be the subset of boundary data
corresponding to minimal failure of the $i$-th axiom at depth $k$.
Since the failure modes are structurally distinct, these subsets are disjoint
and exhaust the boundary, yielding a canonical stratification
\[
\partial \mathcal P = \bigsqcup_{i=1}^m \partial_i \mathcal P .
\]

Because defect structure and the canonical filtration are functorially
determined below depth $k$, the resulting stratification depends only on the
intrinsic structure of $\mathcal P$ and is independent of presentation.

Let $f \colon \mathcal P \to \mathcal Q$ be a phase morphism.
By definition of a phase morphism, $f$ preserves defect structure,
canonical filtrations, and all axioms of Algebraic Phase Theory at every
filtration depth $i<k$.
In particular, $f$ induces an isomorphism between the truncated phases
$\mathcal P^{(<k)}$ and $\mathcal Q^{(<k)}$.

Consequently, the minimal filtration depth at which an axiom fails is
preserved under $f$, and the identity of the axiom whose functorial validity
fails at depth $k$ is preserved under the induced map.
It follows that $f$ induces a well-defined map
\[
f_\partial \colon \partial \mathcal P \longrightarrow \partial \mathcal Q
\]
which preserves the decomposition by failure type, i.e.
\[
f_\partial(\partial_i \mathcal P) \subseteq \partial_i \mathcal Q
\quad \text{for all } i .
\]
Thus the boundary stratification is compatible with phase morphisms.

Finally, if $\mathcal P$ and $\mathcal Q$ are phase equivalent, then there exist
inverse phase morphisms between them.
By the boundary invariance results of Paper~IV, these morphisms induce mutually
inverse maps on boundaries.
Hence the stratifications of $\partial \mathcal P$ and $\partial \mathcal Q$
agree, and the stratification is invariant under phase equivalence.
\end{proof}

\subsection{Boundary Detection Criteria}

The results of the previous sections establish that structural boundaries are
intrinsic and canonically stratified within the standing admissibility framework.
The next question is whether such boundaries can be \emph{detected} from within
the phase itself, without external comparison or appeal to equivalence classes.

A central claim of Algebraic Phase Theory is that boundaries are not merely
post hoc objects but are \emph{visible at finite depth} through functorial
instability of intrinsic invariants.
The purpose of this section is to formalize this claim by introducing boundary
detectors and proving that boundary detection is finite and intrinsically detectable.

\begin{definition}
A \emph{boundary detector} for an algebraic phase $\mathcal P$ is a canonically
defined functorial invariant $D(\mathcal P^{(k)})$ associated to a filtration
stratum, with the property that \emph{failure of functoriality, unbounded growth,
or instability of $D$ at depth $k$ certifies the presence of a structural boundary at that depth}. The invariant $D$ is defined intrinsically from the phase
interaction law and defect structure and remains stable on all strata strictly
below the boundary depth.
\end{definition}

Boundary detectors do not require a priori knowledge of the boundary depth; they
are intrinsic probes whose behavior signals the approach to a boundary.

\begin{theorem}
\label{thm:finite-depth-boundary-detection}
Every structural boundary is detected at finite depth by the failure of canonical
propagation. Concretely, at the boundary depth at least one of the following
breaks down: functorial control of commutator growth, canonical generation of
defect data, stability of the canonical filtration, or canonicity of higher
propagation constraints beyond defect generation.
\end{theorem}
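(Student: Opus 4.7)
The plan is to reduce the theorem to the already-established boundary finiteness and boundary stratification results, and then exhibit one concrete intrinsic detector for each canonical failure channel. Finiteness of the detection depth will follow immediately from the finiteness of the boundary depth $k=k(\mathcal P)$, so the substantive content is identifying the four listed mechanisms as a complete and canonical list of failure modes.

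First, I would fix the minimal boundary depth $k<\infty$ supplied by the finite-depth boundary existence theorem, and invoke the boundary stratification theorem to write $\partial\mathcal P=\bigsqcup_{i=1}^{m}\partial_i\mathcal P$, where each $\partial_i\mathcal P$ witnesses the minimal functorial failure of a single APT axiom at depth $k$. I would then group the APT axioms into four functorial detection channels matching the statement: a commutator-growth detector, reading off the growth of interaction data under morphisms and signalling loss of functorial compatibility; a defect-generation detector, tracking whether fresh defect data continues to be generated canonically, which certifies loss of canonical termination; a filtration-stability detector, verifying that $\mathcal P^{(k)}$ is well-defined independently of presentation; and a higher-propagation detector, monitoring canonicity of constraints beyond defect generation, which is the relevant channel precisely when $k_{\mathrm{ext}}>0$. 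Each of these would be built directly from $(\mathcal P,\circ)$ and defect data, hence is intrinsic and functorial.

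The main step is the verification of \emph{soundness} and \emph{completeness} of this detector family. Soundness, namely stability of each detector on every stratum $\mathcal P^{(i)}$ with $i<k$, is immediate from the minimality of $k$ in the boundary definition, since below $k$ all APT axioms propagate functorially. Completeness, namely that at least one detector must register failure at depth $k$, I would establish by contrapositive: if all four detectors remained functorial at $\mathcal P^{(k)}$, then each APT axiom would continue to propagate canonically at depth $k$, contradicting the definition of boundary depth and the stratification theorem.

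The hard part will be arguing that the four channels genuinely exhaust the APT axiom failures, so that the completeness argument above is watertight. I would handle this by a case analysis within the APT axiom system: any axiom whose failure can occur at finite filtration depth governs either morphism-compatibility of interaction data, generation and termination of defect, well-definedness of the canonical filtration, or canonical extension of higher constraints past defect generation; these four headings cover the entire structural propagation content of APT, and every stratum $\partial_i\mathcal P$ is therefore absorbed into at least one detector channel. Finite-depth detection then follows at no extra cost, since each detector is evaluated stratum by stratum and its first failure occurs exactly at the finite index $k$.
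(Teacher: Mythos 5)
Your proposal is correct and follows essentially the same route as the paper's own proof: fix the finite boundary depth $k$, observe that by minimality all axioms propagate functorially below $k$, and perform an exhaustive case analysis on which axiom-class must fail at depth $k$, matching the four listed detection channels (commutator growth, defect generation, filtration stability, canonicity of higher propagation). The one organizational difference is cosmetic: you route the argument through the boundary stratification theorem and frame it as soundness plus completeness of a detector family, whereas the paper argues directly from the definition of boundary depth and appends the weak-admissibility "loss of canonicity" channel as a closing remark rather than a case. Both proofs rest on the same unproved exhaustion claim — that the APT axioms' failure modes partition into exactly these four headings — so your scaffolding does not add rigor, but it also introduces no gap.
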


\begin{proof}
Let $\mathcal P$ be a terminating algebraic phase with canonical filtration
$\mathcal P = \mathcal P^{(0)} \supset \mathcal P^{(1)} \supset \cdots$ and
boundary depth $k$.
By definition, all axioms of Algebraic Phase Theory propagate functorially on
each stratum $\mathcal P^{(i)}$ for $i<k$, while at least one axiom fails to
propagate functorially at depth $k$.

Failure of functorial propagation at depth $k$ must occur in at least one of the
following cases.

\smallskip
\noindent Case I. The interaction law fails to propagate functorially at depth $k$, in which case
iterated commutators in $\mathcal P^{(k)}$ may exhibit unbounded growth or non-functorial behavior or
non-functorial behavior.

\smallskip
\noindent Case II. Defect-induced complexity ceases to be canonically generated at depth $k$, so
that defect tensors or defect ranks in $\mathcal P^{(k)}$ exceed the bounds compatible with the axioms of Algebraic Phase Theory.

\smallskip 
\noindent Case III. The canonical filtration fails to propagate functorially at depth $k$, and the assignment $\mathcal P \mapsto \{\mathcal P^{(i)}\}$ becomes unstable or
non-canonical. In each case, the corresponding failure is detected by an intrinsic invariant defined purely in terms of the phase interaction law and defect structure.
Since the boundary occurs at a finite filtration level, such detection necessarily occurs at finite depth.

Failure at depth $k$ need not always manifest through unbounded growth. In
weakly admissible phases, it may instead appear as loss of canonicity, in the
sense that multiple inequivalent extensions of higher propagation constraints
exist beyond defect generation. Such ambiguity is again detected intrinsically
at finite depth and therefore constitutes a boundary detector.
\end{proof}

\begin{proposition}
\label{prop:completeness-detection}
Every structural boundary admits at least one finite boundary detector, and no
boundary detector triggers strictly below the true boundary depth.
\end{proposition}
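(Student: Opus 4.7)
The plan is to split the proposition into its two claims and discharge each by combining the preceding finite-depth detection theorem with the defining stability clause for boundary detectors. The existence half is constructive: the classification of failure modes in Theorem~\ref{thm:finite-depth-boundary-detection} already supplies a concrete invariant for each case, so the task is to verify that at least one of these invariants meets the formal requirements of a boundary detector. The soundness half (no premature triggering) is essentially a minimality argument: any trigger at depth $j<k$ would certify axiomatic failure at depth $j$, contradicting the definition of $k$ as the least such depth.

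For existence, I would proceed as follows. Fix $\mathcal P$ with boundary depth $k$. By Theorem~\ref{thm:finite-depth-boundary-detection}, at depth $k$ at least one of the listed mechanisms fails: functorial control of commutator growth, canonical generation of defect data, stability of the canonical filtration, or canonicity of higher propagation constraints. Each of these mechanisms has an associated intrinsic invariant built from the phase interaction law and defect data (respectively, the commutator growth function, the defect tensor or defect rank sequence, the filtration-assignment functor, and the canonicity measure of higher propagation extensions). Select the one corresponding to the failing mechanism and call it $D$. I then check the two clauses of the detector definition. Intrinsic construction is immediate from the specification of each invariant. Stability on strata $\mathcal P^{(i)}$ for $i<k$ follows from the minimality of $k$: by the definition of boundary depth all axioms propagate functorially there, which is exactly what is required for $D$ to remain stable. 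Failure of $D$ at depth $k$ is precisely the content of the chosen case in Theorem~\ref{thm:finite-depth-boundary-detection}, hence $D$ certifies the boundary.

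For soundness, let $D$ be any boundary detector for $\mathcal P$ and suppose it triggers at some depth $j$ in the sense that $D(\mathcal P^{(j)})$ exhibits failure of functoriality, unbounded growth, or instability. By the definition of a boundary detector such behavior certifies the presence of a structural boundary at depth $j$. If $j<k$ then this would force at least one axiom of Algebraic Phase Theory to fail functorially at $\mathcal P^{(j)}$, contradicting the minimality in the definition of $k$, which stipulates that all axioms propagate functorially on $\mathcal P^{(i)}$ for $i<k$. Equivalently, the definition of a detector explicitly requires $D$ to remain stable on all strata strictly below the boundary depth, ruling out such a trigger by fiat. Either formulation yields the desired contradiction.

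The main obstacle is calibrating what counts as a genuine trigger versus an admissible fluctuation of $D$ below the boundary. In particular one must ensure that the intrinsic invariants selected in the existence step really are stable and not merely bounded on lower strata, so that a premature apparent instability cannot be misread as a boundary signal. This is handled by working with the canonical, functorially defined form of each invariant (commutator growth function, defect rank sequence, and so on), whose stability on $\mathcal P^{(i)}$ for $i<k$ is forced directly by the functorial propagation of the corresponding axiom, which the minimality of $k$ guarantees. Once this is verified case by case, both halves of the proposition follow without further input.
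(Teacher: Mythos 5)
Your proposal is correct and follows essentially the same two-part structure as the paper: existence is read off from Theorem~\ref{thm:finite-depth-boundary-detection} by identifying the intrinsic invariant attached to the failing mechanism, and sharpness follows from the minimality of $k$ together with the fact that functorial propagation of all axioms below depth $k$ forces stability of any intrinsically defined detector there. The extra discussion of which invariant to select per failure mode and the two equivalent formulations of the soundness step merely elaborate what the paper states more compactly.
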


\begin{proof}
Let $\mathcal P$ be a terminating algebraic phase with boundary depth $k$. Existence follows from Theorem~\ref{thm:finite-depth-boundary-detection}, which
asserts that at depth $k$ at least one axiom of Algebraic Phase Theory fails to
propagate functorially. Such a failure necessarily manifests as instability,
unbounded growth, or non-functorial behavior of an intrinsic invariant defined
from the phase interaction law and defect structure, and therefore yields a
finite boundary detector.

For sharpness, let $i<k$. By minimality of the boundary depth, all axioms of
Algebraic Phase Theory propagate functorially on $\mathcal P^{(i)}$. In
particular, commutator growth, defect generation, and the canonical filtration
are functorially and canonically controlled on $\mathcal P^{(i)}$. Since every
boundary detector is defined intrinsically from the interaction law and defect
structure, no such detector can exhibit instability on $\mathcal P^{(i)}$.
Hence no boundary detector triggers strictly below the true boundary depth.
\end{proof}

\begin{remark}
Because boundary detectors are defined purely in terms of defect, filtration,
and interaction data, their evaluation requires only finite computation up to
the relevant filtration depth.
In particular, boundary detection does not depend on analytic limits, infinite
processes, or external classification results.
\end{remark}

\begin{theorem}
The collection of commutator growth, defect proliferation, and filtration
stability detectors is sufficient to detect structural boundaries arising within the standing
admissibility framework in
algebraic phases satisfying Axioms~I-V.
\end{theorem}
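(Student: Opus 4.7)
The plan is to combine the canonical boundary stratification established earlier with the finite-depth detection theorem, and to verify that each stratum is witnessed by at least one of the three named detector classes. First I would invoke the stratification theorem to write
\[
\partial\mathcal P \;=\; \bigsqcup_{i=1}^m \partial_i\mathcal P,
\]
indexed by the axiom whose functorial propagation fails first at the boundary depth $k$. Since the axioms are logically independent, it suffices to check, stratum by stratum, that the corresponding failure is certified by a commutator growth detector, a defect proliferation detector, or a filtration stability detector.

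Next I would match each axiom failure to a detector class in the natural way dictated by Theorem~\ref{thm:finite-depth-boundary-detection}. Failure of the interaction law at depth $k$ forces iterated commutators in $\mathcal P^{(k)}$ to exhibit non-functorial or unbounded behaviour, which is exactly what the commutator growth detector measures. Failure of canonical defect generation forces defect tensors or defect ranks to exceed the axiomatic bounds valid on $\mathcal P^{(<k)}$, and this is the object probed by a defect proliferation detector. Failure of filtration canonicity makes $\mathcal P \mapsto \{\mathcal P^{(i)}\}$ unstable or non-functorial at depth $k$, which is precisely what a filtration stability detector records.

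The main obstacle is the residual case of weakly admissible phases, where $k > d$ and the failure at depth $k$ takes the subtler form of \emph{loss of canonicity} of higher propagation constraints beyond defect generation rather than unbounded growth. My plan is to argue that this case reduces to filtration instability: any ambiguity between inequivalent functorial extensions of higher propagation induces, at depth $k$, two distinct canonical refinements of $\mathcal P^{(k)}$, so the assignment $\mathcal P \mapsto \{\mathcal P^{(i)}\}$ fails to be single-valued and functorial at that stratum. This is detected by the filtration stability detector, since filtration canonicity already presupposes uniqueness of such extensions on $\mathcal P^{(<k)}$. Hence even the higher-propagation stratum is covered by the listed detectors.

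Finally I would close the argument by invoking Proposition~\ref{prop:completeness-detection} for sharpness: no detector in the listed triple can trigger on $\mathcal P^{(i)}$ for $i < k$, because commutator control, defect generation, and filtration canonicity all hold functorially below the boundary depth. Combining existence (every stratum is hit by some listed detector) and sharpness (no listed detector false-triggers below $k$) yields that the three detector classes are jointly sufficient and faithful, which is exactly the claim.
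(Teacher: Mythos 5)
Your argument follows the same basic strategy as the paper's proof — case analysis over the axioms whose functorial propagation can fail at the boundary depth, matching each class of failure to the corresponding detector — but you carry it out more carefully, and in one place you fill a gap the paper leaves open. The paper's proof simply asserts that "the axioms governing phase propagation fall into three classes" and that failures of each class are detected by the corresponding detector. But Theorem~\ref{thm:finite-depth-boundary-detection}, which both you and the paper lean on, explicitly lists a \emph{fourth} failure mode in the weakly admissible regime: loss of canonicity of higher propagation constraints beyond defect generation, manifesting as multiple inequivalent extensions rather than unbounded growth. The statement under proof names only three detector classes, so something must explain why this fourth mode does not escape them. You identify this as the "main obstacle" and propose reducing it to filtration instability, arguing that inequivalent extensions at depth $k$ make the assignment $\mathcal P\mapsto\{\mathcal P^{(i)}\}$ non-single-valued. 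This is a reasonable structural argument at the paper's level of rigor, though it is heuristic: the claim that a multiplicity of higher-propagation extensions literally produces distinct refinements of the canonical filtration is asserted, not derived from the axioms or from an earlier result. The paper itself does not supply this reduction, so you are not contradicting it, but be aware you are adding a substantive bridging claim. Your appeal to Proposition~\ref{prop:completeness-detection} for sharpness is a small addition not present in the paper's proof (the theorem asks only for sufficiency, not faithfulness), but it does no harm.

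In short: same overall approach, correct, and slightly stronger in that you confront the fourth failure mode explicitly rather than folding it silently into the three-class taxonomy; just note that the reduction of higher-propagation ambiguity to filtration instability remains a sketch rather than a proof.
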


\begin{proof}
Let $\mathcal P$ be an algebraic phase satisfying Axioms~I-V, and let $k$ denote
its boundary depth. By definition, $k$ is the minimal filtration index such that
at least one axiom of Algebraic Phase Theory fails to propagate functorially on
$\mathcal P^{(k)}$.

The axioms governing phase propagation fall into three classes: functoriality of
the interaction law, canonical generation of defect-induced complexity, and
functorial stability of the canonical filtration. Failure of interaction
functoriality is detected by unbounded or non-functorial commutator growth;
failure of canonical defect generation is detected by defect proliferation
beyond the bounds imposed by the axioms; and failure of filtration functoriality
is detected by instability or non-canonicity of the canonical filtration.

Since these axioms exhaust all mechanisms by which phase structure propagates,
any failure occurring at depth $k$ must fall into at least one of these classes.
Consequently, at least one of the listed detectors triggers at the boundary
depth. This proves that the collection of detectors is sufficient.
\end{proof}

Boundary detection thus provides a practical and conceptual bridge between the
axiomatic definition of structural boundaries and the deformation and
obstruction theory developed in subsequent sections.

\section{Boundary Exact Sequences, Rigidity Islands, Classification, and Obstruction}

This combined section develops the algebraic structure that emerges once a
structural boundary is reached.  
We begin by describing the exact sequence that isolates the rigid portion of a
phase from the boundary data responsible for obstruction.  
This exact description provides an organising framework for subsequent
phenomena: the persistence of rigidity islands, their intrinsic classification,
and the universal obstruction objects arising from boundary failure.

\subsection{Boundary Exact Sequences}

The previous sections establish that structural boundaries are intrinsic,
stratified, and finitely detectable within the standing admissibility
framework.
We now show that boundaries admit a \emph{natural exact description} that
organizes rigidity and failure into a single algebraic object.

The key observation is that, although canonical extension fails at a boundary,
it does so in a controlled manner.
What survives is a maximal rigid subphase, and what fails is captured by a
universal quotient.
Together, these form an exact sequence that governs both rigidity persistence
and deformation obstruction.

\begin{theorem}
\label{thm:boundary-exactness}
Let $\mathcal P$ be a terminating algebraic phase with boundary depth $k$.
Then there exists a canonical exact sequence \emph{in the intrinsic sense of
phases}
\[
0 \longrightarrow \mathcal R
\longrightarrow \mathcal P
\longrightarrow \mathcal B
\longrightarrow 0,
\]
where:
\begin{itemize}
  \item $\mathcal R$ is the maximal rigid subphase of $\mathcal P$ whose structure,
        when restricted to $\mathcal R$, propagates functorially beyond depth $k$;
  \item $\mathcal B := \mathcal P / \mathcal R$ is the \emph{boundary quotient},
        encoding the obstruction to further canonical extension to further canonical extension.
\end{itemize}
\end{theorem}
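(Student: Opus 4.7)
The plan is to construct $\mathcal R$ canonically as the maximum of the lattice of subphases of $\mathcal P$ on which all APT axioms continue to propagate functorially beyond depth $k$, then to define $\mathcal B := \mathcal P / \mathcal R$ via the intrinsic phase quotient and verify that the resulting sequence is exact, canonical, and functorial. The approach is modeled on the analogous socle/rigid-core extraction procedure familiar from representation theory, but adapted to the purely structural setting where the axioms of APT replace any additive or abelian underpinning.

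First I would introduce the collection $\mathcal S$ of all subphases $\mathcal Q \subseteq \mathcal P$ whose strata $\mathcal Q^{(i)}$ satisfy every APT axiom functorially, in particular without boundary failure at depth $k$. This collection is nonempty, since it contains the zero subphase and, by finite termination, the terminal stratum $\mathcal P^{(L)} = 0$. The crucial structural step is to show that $\mathcal S$ is closed under intrinsic joins in $\mathcal P$: given $\mathcal Q_1, \mathcal Q_2 \in \mathcal S$, any hypothetical failure of functorial propagation on the internal sum $\mathcal Q_1 + \mathcal Q_2$ would, by the stratification theorem of the previous section, localize to a unique axiom at a unique depth, and restriction to one of the two summands would expose that same failure, contradicting $\mathcal Q_i \in \mathcal S$. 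Applying finite termination to the resulting ACC on subphases of $\mathcal P$ then yields a unique maximum $\mathcal R \in \mathcal S$.

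Next I would set $\mathcal B := \mathcal P / \mathcal R$, using the intrinsic phase quotient associated to the subphase congruence generated by $\mathcal R$; this quotient is well defined in the APT framework because $\mathcal R$ is closed under every intrinsic operation by construction. The three exactness requirements then follow directly: the inclusion $\mathcal R \hookrightarrow \mathcal P$ is a phase monomorphism by definition, the projection $\mathcal P \to \mathcal B$ is a phase epimorphism by the universal property of the intrinsic quotient, and the kernel of the projection coincides with $\mathcal R$ by construction of the congruence. Canonicity and functoriality reduce to the boundary invariance results of Paper~IV: any intrinsic equivalence $F \colon \mathcal P \xrightarrow{\sim} \mathcal P'$ preserves defect degrees, filtration strata, and functorial propagation at every depth, so $F(\mathcal R) \in \mathcal S'$, and maximality forces $F(\mathcal R) = \mathcal R'$.

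The hard part will be the join-closure of $\mathcal S$. In classical module or ring settings, sums of well-behaved subobjects inherit good properties almost automatically, but here functorial propagation is a condition on the full axiomatic interaction data, and one must rule out subtle cross-terms between $\mathcal Q_1$ and $\mathcal Q_2$ inside $\mathcal P$ that could induce failures invisible on either summand in isolation. Controlling these cross-terms relies essentially on the axiom independence underlying the boundary stratification theorem, together with the localization invariance proved above; without these intrinsic tools, the maximum $\mathcal R$ might fail to exist, or to be canonical, in any presentation-independent way. Once join-closure is established, the rest of the argument is essentially bookkeeping inside the intrinsic phase category.
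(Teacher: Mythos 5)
Your construction takes a genuinely different route from the paper. The paper defines $\mathcal R$ directly and elementwise, as the subset of $\mathcal P$ consisting of those elements whose defect and interaction data propagate functorially past depth $k$; it then verifies maximality by the contrapositive (adjoining any $x \notin \mathcal R$ would break rigidity), and reads off exactness of $0 \to \mathcal R \to \mathcal P \to \mathcal P/\mathcal R \to 0$ from the definitions of subphase inclusion and quotient. You instead construct $\mathcal R$ as the top element of the lattice $\mathcal S$ of rigid subphases, via join-closure plus an ascending chain argument. The two routes target the same object, but the work is placed differently: the paper's elementwise definition silently presupposes that the set of ``good'' elements is actually closed under the phase operations (i.e., that it forms a subphase at all), which is exactly the closure question you surface as join-closure of $\mathcal S$. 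You are therefore more explicit about where the real difficulty lies. On the other hand, your sketch of join-closure --- that any failure on $\mathcal Q_1 + \mathcal Q_2$ would localize to one axiom at one depth and then restrict to a failure on one summand --- is not yet an argument: the whole danger is precisely that cross-terms between $\mathcal Q_1$ and $\mathcal Q_2$ (e.g., iterated commutators mixing the two) could produce a failure on the join that vanishes on both restrictions, and axiom independence by itself does not rule this out. So you have correctly identified the load-bearing step, but you have not discharged it; the paper, by contrast, does not even flag it. For the functoriality and canonicity clauses your appeal to the Paper~IV invariance results matches what the paper does in the companion Lemma~\ref{lem:boundary-functoriality}, and your kernel identification matches Theorem~\ref{thm:islands-kernel}. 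Net: a more structured and more honest proof skeleton than the paper's, at the cost of an explicit open obligation (join-closure) that the paper's phrasing quietly absorbs into the definition of $\mathcal R$.
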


\begin{proof}
Let $\mathcal P$ be a terminating algebraic phase with boundary depth $k$.
By definition of $k$, all axioms of Algebraic Phase Theory propagate
functorially on each stratum $\mathcal P^{(i)}$ for $i<k$, while at least one
axiom fails to propagate at depth $k$.

Define $\mathcal R \subset \mathcal P$ to be the full subphase consisting of
those elements whose interaction relations and defect data continue to satisfy
all axioms of Algebraic Phase Theory when restricted to that subphase beyond
depth $k$ By definition, $\mathcal R$ is rigid and its structure continues to propagate functorially beyond the boundary depth.

To verify that $\mathcal R$ is maximal with this property, let
$x \in \mathcal P \setminus \mathcal R$. By definition of $\mathcal R$, the
defect data or interaction relations associated to $x$ do not propagate
functorially beyond depth $k$. If $x$ were adjoined to $\mathcal R$, the
resulting subphase would therefore exhibit non-functorial behavior past the
boundary depth, violating the defining rigidity property. Hence no element of
$\mathcal P \setminus \mathcal R$ can be adjoined to $\mathcal R$ while
preserving rigidity, and $\mathcal R$ is maximal.Hence no strictly larger rigid subphase exists, and the inclusion
\[
\mathcal R \hookrightarrow \mathcal P
\]
is intrinsically determined by the phase structure.

Define the boundary quotient $\mathcal B := \mathcal P / \mathcal R$. By
construction, an element of $\mathcal P$ maps to zero in $\mathcal B$ if and
only if it lies in $\mathcal R$, so the sequence
\[
0 \longrightarrow \mathcal R \longrightarrow \mathcal P \longrightarrow
\mathcal B \longrightarrow 0
\]
is exact at $\mathcal P$. Exactness at $\mathcal R$ and $\mathcal B$ holds in the intrinsic sense of
phases determined by the inclusion and quotient constructions above.
\end{proof}

\begin{remark}
In the weakly admissible regime ($k>d$), the rigid subphase $\mathcal R$
coincides with the strong core of $\mathcal P$ and hence with the rigidity
island.
The boundary quotient $\mathcal B$ records the weak extension data
responsible for non canonical higher propagation.

At the same time, the boundary exact sequence
\[
0 \longrightarrow \mathcal R \longrightarrow \mathcal P \longrightarrow \mathcal B \longrightarrow 0
\]
is not a short exact sequence in an abelian category, it is exact only in the
intrinsic sense appropriate to the category of phases.
No additional algebraic structure is assumed beyond what is intrinsic to the
phase axioms.
\end{remark}

\begin{lemma}
\label{lem:boundary-functoriality}
Let $F:\mathcal P\to\mathcal Q$ be a phase morphism.
Then $F$ restricts to a morphism of boundary exact sequences
\[
\begin{array}{ccccccccc}
0 &\to& \mathcal R_{\mathcal P} &\to& \mathcal P &\to& \mathcal B_{\mathcal P} &\to& 0 \\
  &   & \downarrow & & \downarrow & & \downarrow & & \\
0 &\to& \mathcal R_{\mathcal Q} &\to& \mathcal Q &\to& \mathcal B_{\mathcal Q} &\to& 0 ,
\end{array}
\]
and this construction is invariant under phase equivalence.
\end{lemma}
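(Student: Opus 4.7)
The plan is to construct each of the three vertical arrows from $F$ itself, check the two squares commute by definition, and then observe that functoriality of the rigid subphase and quotient constructions makes the whole diagram equivalence-invariant. The key fact I would invoke throughout is that any phase morphism, by the standing definition of Algebraic Phase Theory, preserves defect data, the canonical filtration, and the functorial or non-functorial propagation status of each APT axiom at every filtration depth. This is precisely the property used in the proof of the boundary stratification theorem and in the invariance result for boundary depth.

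First, I would argue that $F(\mathcal R_{\mathcal P}) \subseteq \mathcal R_{\mathcal Q}$. Recall from Theorem~\ref{thm:boundary-exactness} that $\mathcal R_{\mathcal P}$ is the maximal subphase of $\mathcal P$ whose interaction relations and defect data continue to satisfy every APT axiom functorially past the boundary depth. Given $x \in \mathcal R_{\mathcal P}$, the image $F(x)$ inherits this rigidity property inside $\mathcal Q$ because $F$ preserves the interaction law and the defect data that determine it, so no new non-functorial behavior can be introduced by applying $F$. Consequently $F(\mathcal R_{\mathcal P})$ lies in a rigid subphase of $\mathcal Q$, and by maximality of $\mathcal R_{\mathcal Q}$ this subphase is contained in $\mathcal R_{\mathcal Q}$.

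Given this containment, the induced map $\bar F : \mathcal B_{\mathcal P} \to \mathcal B_{\mathcal Q}$ is well defined by the quotient construction in the category of phases, and the left and right squares commute tautologically: the left by restriction of $F$, the right by the defining property of $\bar F$. For invariance under phase equivalence, I would apply the same construction to a quasi-inverse $G$ of $F$; the composites $G \circ F$ and $F \circ G$ induce identities on both $\mathcal R$ and $\mathcal B$, so the associated morphism of boundary exact sequences is an isomorphism in the diagram category. Because the rigid subphase and boundary quotient were characterized intrinsically in Theorem~\ref{thm:boundary-exactness}, no choices intervene.

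The main obstacle I anticipate is the first step: upgrading the assertion that $F$ preserves individual rigidity properties of elements to the structural statement that $F$ sends the \emph{maximal} rigid subphase into the maximal rigid subphase. This rests crucially on uniqueness and maximality of $\mathcal R_{\mathcal Q}$, which is what turns the containment $F(\mathcal R_{\mathcal P}) \subseteq \mathcal R_{\mathcal Q}$ from a choice into a canonical morphism; without the maximality clause established in Theorem~\ref{thm:boundary-exactness}, one would only obtain containment in \emph{some} rigid subphase, and the diagram would lose its canonicity.
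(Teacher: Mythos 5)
Your proposal is correct and takes essentially the same approach as the paper's: both establish $F(\mathcal R_{\mathcal P}) \subseteq \mathcal R_{\mathcal Q}$ from the fact that a phase morphism preserves defect, filtration, and interaction data, and then obtain the induced map $\mathcal B_{\mathcal P}\to\mathcal B_{\mathcal Q}$ via the universal property of the quotient. The only difference is that for equivalence-invariance you argue directly with a quasi-inverse $G$, whereas the paper simply cites the rigidity and boundary invariance results of Algebraic Phase Theory~IV; note only that $G\circ F$ is naturally isomorphic to (not literally equal to) the identity, so the induced maps on $\mathcal R$ and $\mathcal B$ are isomorphisms rather than identities, which still yields the claimed invariance.
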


\begin{proof}
Let $F:\mathcal P\to\mathcal Q$ be a phase morphism. By definition, $F$
preserves defect degree, the canonical filtration, and interaction relations.
Hence, if an element of $\mathcal P$ lies in the rigid subphase
$\mathcal R_{\mathcal P}$, its structure propagates canonically beyond the
boundary depth. Its image under $F$ therefore has the same property in
$\mathcal Q$. This defines a canonical restriction
$F|_{\mathcal R_{\mathcal P}}:\mathcal R_{\mathcal P}\to\mathcal R_{\mathcal Q}$.

Since $F$ maps $\mathcal R_{\mathcal P}$ into $\mathcal R_{\mathcal Q}$, the
universal property of the quotient yields a well defined induced morphism on
boundary quotients
\[
\mathcal B_{\mathcal P} \longrightarrow \mathcal B_{\mathcal Q}.
\]
Invariance under phase equivalence follows from the rigidity and boundary
invariance results established in Algebraic Phase Theory~IV.
\end{proof}

\begin{corollary}
\label{cor:boundary-obstruction-functor}
The assignment
\[
\mathcal P \longmapsto \mathcal B(\mathcal P)
\]
defines a functor from the category of algebraic phases to a category of
obstruction objects.
\end{corollary}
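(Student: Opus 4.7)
The plan is to reduce the statement to the data already assembled in this section: Theorem~\ref{thm:boundary-exactness} produces the boundary quotient $\mathcal{B}(\mathcal{P})$ canonically from each phase, and Lemma~\ref{lem:boundary-functoriality} produces an induced morphism $\mathcal{B}(F)\colon\mathcal{B}(\mathcal{P})\to\mathcal{B}(\mathcal{Q})$ from each phase morphism $F\colon\mathcal{P}\to\mathcal{Q}$. The remaining content of the corollary is therefore purely categorical: verify that this object-and-morphism assignment respects identities and composition, and identify the target category in which the boundary quotients live.

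First I would fix the target category. The natural choice, intrinsic to APT, is the full subcategory of the phase category whose objects are boundary quotients and whose morphisms are those induced from phase morphisms through the quotient construction of Theorem~\ref{thm:boundary-exactness}. This avoids importing abelian, additive, or cohomological structure that the phase setting does not support, and matches the warning in the remark after Theorem~\ref{thm:boundary-exactness} that exactness here is only in the intrinsic sense of phases. With this target in place, well-definedness on objects is immediate, and well-definedness on morphisms is exactly the content of Lemma~\ref{lem:boundary-functoriality}.

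Next I would verify the two functor axioms. For identity preservation, $\mathrm{id}_{\mathcal{P}}$ restricts to $\mathrm{id}_{\mathcal{R}(\mathcal{P})}$, so the universal factorization through $\mathcal{P}/\mathcal{R}(\mathcal{P})$ must yield the identity on $\mathcal{B}(\mathcal{P})$. For composition, given $F\colon\mathcal{P}\to\mathcal{Q}$ and $G\colon\mathcal{Q}\to\mathcal{S}$, both $\mathcal{B}(G\circ F)$ and $\mathcal{B}(G)\circ\mathcal{B}(F)$ are canonical factorizations of the same composite $G\circ F$ modulo the respective rigid subphases; the uniqueness clause of the universal property used to define the induced map in Lemma~\ref{lem:boundary-functoriality} forces them to coincide. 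Invariance under phase equivalence, already noted in that lemma, then shows that the functor descends to the equivalence quotient of the phase category.

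The main obstacle will be ensuring that the universal factorization is strictly, rather than merely canonically, determined, since the ambient category of phases is not abelian and the quotient $\mathcal{P}/\mathcal{R}$ is defined intrinsically rather than via cokernel data. Concretely, I need to confirm that the induced map on boundary quotients depends only on $F$ and not on any auxiliary choices in the maximality construction of $\mathcal{R}$. Once this strictness is extracted from the maximality of $\mathcal{R}$ established inside Theorem~\ref{thm:boundary-exactness}, identity and composition preservation are essentially formal, and the functor assertion of the corollary follows.
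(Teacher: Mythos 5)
Your proposal is correct and follows essentially the same route as the paper: invoke Lemma~\ref{lem:boundary-functoriality} for the induced morphisms and then check functoriality from the universal (quotient) property. The paper's own proof is terser, asserting identity and composition preservation ``because the construction of $\mathcal B$ is canonical,'' whereas you rightly unpack this into the uniqueness clause of the factorization through $\mathcal P/\mathcal R$; your closing concern about strictness of that factorization is a legitimate refinement that the paper leaves implicit, but it does not change the argument's substance.
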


\begin{proof}
By Lemma~\ref{lem:boundary-functoriality}, every phase morphism
$F:\mathcal P\to\mathcal Q$ induces a morphism
$\mathcal B(\mathcal P)\to\mathcal B(\mathcal Q)$. Identity morphisms and
composition are preserved because the construction of $\mathcal B$ is
canonical and compatible with phase morphisms.
\end{proof}

\begin{remark}
The boundary exact sequence isolates the non-rigid behavior detected within the
boundary calculus of an algebraic phase into a single universal object.
Rigidity corresponds to the vanishing of the boundary quotient, while
deformation beyond rigidity is governed by the boundary quotient
$\mathcal B$ within the standing admissibility framework.
In particular, obstruction phenomena are encoded at the level of boundary
quotients.

From the perspective of the boundary calculus, the boundary quotient
$\mathcal B$ may be viewed as the global algebraic collapse of the stratified
boundary
\[
\partial\mathcal P = \bigsqcup_i \partial_i \mathcal P.
\]
Each boundary stratum contributes intrinsically to the non-rigid behavior
encoded in $\mathcal B$, while the stratification by failure type and depth is
forgotten in passing to the quotient.
In this way, $\mathcal B$ encodes corresponding intrinsic boundary phenomena
at a coarser, global level.
These principles are developed systematically in the following sections.
\end{remark}

\subsection{Rigidity Islands}

Structural boundaries mark the failure of global canonical propagation.
However, failure at a boundary does not imply total loss of structure.
Instead, a central phenomenon of Algebraic Phase Theory is that \emph{rigid
substructures persist locally beyond global failure}.
These persistent regions form intrinsic, maximal objects that we call
\emph{rigidity islands}.

Rigidity islands should be viewed as regions of retained algebraic control: although the ambient phase fails to propagate structure functorially,
these subphases retain full defect control, filtration stability, and canonical
behavior. They will play a fundamental role in deformation theory and reconstruction
beyond boundaries.

\begin{definition}
Let $\mathcal P$ be an algebraic phase. A \emph{rigid subphase} of $\mathcal P$
is a subphase $\mathcal I \subseteq \mathcal P$ such that $\mathcal I$ satisfies
all axioms of Algebraic Phase Theory at every filtration depth, and the
inclusion $\mathcal I \hookrightarrow \mathcal P$ preserves defect data, the
canonical filtration, and interaction relations.
\end{definition}

\begin{definition}
A \emph{rigidity island} in $\mathcal P$ is a rigid subphase that is maximal with
respect to inclusion.
\end{definition}

Because rigid substructures persist beyond global boundary failure, it is
natural to collect all such rigid behavior into a single canonical object and
to treat this assignment functorially. This construction will serve as a
distinguished base for deformation, reconstruction, and comparison of phases
beyond structural boundaries.

\begin{theorem}
\label{thm:existence-uniqueness-island}
Every terminating algebraic phase $\mathcal P$ with a structural boundary admits a unique
maximal rigidity island $\mathcal I_{\mathcal P}$. Equivalently, every rigid
subphase of $\mathcal P$ is contained in a canonical maximal rigid subphase.
Moreover, the assignment
\[
\mathcal P \longmapsto \mathcal I_{\mathcal P}
\]
is compatible with phase morphisms.
\end{theorem}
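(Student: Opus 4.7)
The approach is to identify the rigidity island with the maximal rigid subphase $\mathcal R$ already produced by Theorem~\ref{thm:boundary-exactness}, then verify the two remaining claims: that every rigid subphase is contained in $\mathcal R$ (so $\mathcal R$ is the unique maximal element, equivalently the union of all rigid subphases), and that $\mathcal P \mapsto \mathcal R$ is functorial.

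First I would set $\mathcal I_{\mathcal P} := \mathcal R$, where $\mathcal R$ is the kernel of the boundary exact sequence. By the proof of Theorem~\ref{thm:boundary-exactness}, $\mathcal R$ satisfies every APT axiom when restricted to itself and propagates functorially beyond the boundary depth; hence $\mathcal R$ is a rigid subphase in the sense of the preceding definition. The maximality argument there shows that no element outside $\mathcal R$ can be adjoined while preserving rigidity, so $\mathcal R$ is maximal among rigid subphases of $\mathcal P$.

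Next I would prove the equivalent description as a union. Given any family $\{\mathcal I_\alpha\}$ of rigid subphases, let $\mathcal I$ be the subphase generated by $\bigcup_\alpha \mathcal I_\alpha$. The key step is to observe that rigidity, defined via functorial propagation of the axioms together with preservation of defect and the canonical filtration, is a condition preserved under the intrinsic operation $\circ$ whenever both operands lie in rigid subphases: each element carries its defect degree and filtration depth as functorial invariants, and these invariants are inherited by any interaction product. This shows $\mathcal I$ is rigid, and applied to the family of \emph{all} rigid subphases of $\mathcal P$ yields the union characterization; uniqueness is then automatic, since the union of a family closed under unions is its own unique maximal element, and must coincide with $\mathcal R$.

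Finally, functoriality follows from the defining property of phase morphisms. Given $F \colon \mathcal P \to \mathcal Q$, the image $F(\mathcal I_{\mathcal P})$ inherits functorial propagation of every axiom from $\mathcal I_{\mathcal P}$, since $F$ preserves defect, filtration, and interaction relations. Hence $F(\mathcal I_{\mathcal P})$ is a rigid subphase of $\mathcal Q$ and so lies inside $\mathcal I_{\mathcal Q}$ by maximality; the induced restriction $\mathcal I_F := F|_{\mathcal I_{\mathcal P}}$ manifestly preserves identities and composition. The main obstacle is the closure-under-unions step: classically, a union of subalgebras need not be a subalgebra, and one must use the axiomatic content of APT, in particular that defect degree and filtration depth are intrinsic to each element rather than imposed externally, to argue that the generated subphase inherits rigidity from its generators at every finite depth.
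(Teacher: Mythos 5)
Your proposal diverges from the paper's proof in one essential step, and the divergence introduces an unresolved gap. The paper establishes that $\mathcal R$ (from Theorem~\ref{thm:boundary-exactness}) is the \emph{greatest} rigid subphase by a direct containment argument: for any rigid subphase $\mathcal J$, the composite $\mathcal J \hookrightarrow \mathcal P \to \mathcal B(\mathcal P)$ is trivial, since $\mathcal B(\mathcal P)$ by construction records exactly the elements whose defect or interaction data obstruct canonical propagation; hence $\mathcal J$ factors through $\ker(\mathcal P \to \mathcal B(\mathcal P)) = \mathcal R$. This makes the union characterization a one-line corollary and never requires a closure argument. You instead attempt to prove directly that the subphase generated by a union of rigid subphases is itself rigid, and you correctly flag this as the crux.

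The problem is that the closure claim is asserted rather than established. Rigidity, as defined in the paper, is a condition on the subphase as a whole (it must satisfy all APT axioms at every filtration depth), not merely a property of its individual elements. If $x \in \mathcal I_\alpha$ and $y \in \mathcal I_\beta$ are elements of two different rigid subphases, nothing in the axioms you invoke guarantees that the interaction $x \circ y$, or the full subphase generated by $\mathcal I_\alpha \cup \mathcal I_\beta$, propagates functorially past the boundary depth; this is exactly the classical failure mode (unions of subalgebras need not be subalgebras) that you acknowledge but do not overcome. Saying that defect degree and filtration depth are intrinsic to each element is not enough to control the higher interaction constraints on mixed products. The paper's boundary-quotient argument is precisely the device that sidesteps this: by characterizing $\mathcal R$ as a kernel, it reduces the containment of an arbitrary rigid subphase to an elementwise vanishing condition that is automatic from the construction of $\mathcal B(\mathcal P)$. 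To repair your proof without adopting that route, you would need to prove (or cite) a lemma that the rigid-element locus is closed under $\circ$ at all depths, which is a nontrivial additional claim. Your maximality argument (quoting Theorem~\ref{thm:boundary-exactness}) only yields that $\mathcal R$ is a maximal element, not a greatest element, so it cannot substitute for the missing containment. The functoriality portion of your proof matches the paper and is fine.
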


\begin{proof}
Let $\mathcal R$ denote the rigid subphase appearing in the boundary exact
sequence of Theorem~\ref{thm:boundary-exactness}. By construction, $\mathcal R$
satisfies all axioms of Algebraic Phase Theory at every filtration depth and
embeds intrinsically into $\mathcal P$.

Let $\mathcal J \subseteq \mathcal P$ be any rigid subphase. By definition of
rigidity, the defect and interaction data of $\mathcal J$ propagate
canonically at all depths. By Theorem~\ref{thm:boundary-exactness}, the boundary
quotient $\mathcal B(\mathcal P)$ records the elements of
$\mathcal P$ whose defect or interaction data obstruct canonical propagation.
It follows that the composite map
\[
\mathcal J \hookrightarrow \mathcal P \longrightarrow \mathcal B(\mathcal P)
\]
is trivial. Hence $\mathcal J$ factors through the kernel of the boundary
quotient map, which is $\mathcal R$.

Therefore every rigid subphase of $\mathcal P$ is contained in $\mathcal R$.
As $\mathcal R$ is itself rigid, no strictly larger rigid subphase can exist.
Consequently, $\mathcal R$ is maximal with respect to rigidity. We define
\[
\mathcal I_{\mathcal P} := \mathcal R.
\]

If $\mathcal I' \subseteq \mathcal P$ is any other maximal rigid subphase, then
$\mathcal I' \subseteq \mathcal R$ and $\mathcal R \subseteq \mathcal I'$, and
hence $\mathcal I' = \mathcal R$. This establishes uniqueness.

Finally, phase morphisms preserve defect degree, the canonical filtration, and
rigid subphases. If $F:\mathcal P \to \mathcal Q$ is a phase morphism, then
$F(\mathcal I_{\mathcal P})$ is a rigid subphase of $\mathcal Q$ and is therefore
contained in $\mathcal I_{\mathcal Q}$. This determines a natural restriction
\[
F|_{\mathcal I_{\mathcal P}}:\mathcal I_{\mathcal P} \to \mathcal I_{\mathcal Q}.
\]
\end{proof}

\begin{definition}
The \emph{rigidity island embedding} is the canonical inclusion
\[
\iota:\mathcal I_{\mathcal P}\hookrightarrow \mathcal P .
\]
\end{definition}

\begin{proposition}
\label{prop:maximal-persistence}
The rigidity island $\mathcal I_{\mathcal P}$ is maximal among all subphases of
$\mathcal P$ admitting full canonical filtration, functorial defect control,
and finite termination. Any extension beyond $\mathcal I_{\mathcal P}$
encounters boundary failure within the standing admissibility framework.
\end{proposition}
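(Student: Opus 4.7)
The plan is to reduce both assertions to the maximality and uniqueness of the rigidity island already established in Theorem~\ref{thm:existence-uniqueness-island}, using the boundary exact sequence of Theorem~\ref{thm:boundary-exactness} as the bridge between the three listed structural properties and the axiomatic notion of rigidity.

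For the first assertion, I would let $\mathcal J \subseteq \mathcal P$ be any subphase admitting a full canonical filtration, functorial defect control, and finite termination, and argue that these three conditions jointly force $\mathcal J$ to be rigid in the sense of the earlier definition, i.e.\ that every APT axiom propagates functorially on $\mathcal J$ at every filtration depth. The key step is to invoke the completeness result at the end of Section~4: since the intrinsic detectors of commutator growth, defect proliferation, and filtration instability jointly suffice to detect every structural boundary, the simultaneous absence of these three failure modes on $\mathcal J$ rules out the existence of any boundary within $\mathcal J$. Hence $\mathcal J$ is a rigid subphase of $\mathcal P$, and by Theorem~\ref{thm:existence-uniqueness-island} it embeds into $\mathcal I_{\mathcal P}$, which establishes maximality.

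For the second assertion, I would argue by contrapositive. Suppose $\mathcal J$ strictly contains $\mathcal I_{\mathcal P}$ and choose any $x \in \mathcal J \setminus \mathcal I_{\mathcal P}$. By the construction of the boundary quotient in Theorem~\ref{thm:boundary-exactness}, the image of $x$ in $\mathcal B(\mathcal P) = \mathcal P / \mathcal I_{\mathcal P}$ is nonzero, and by definition of $\mathcal I_{\mathcal P}$ as the maximal rigid subphase the defect or interaction data attached to $x$ fail to propagate functorially past the boundary depth $k$. This failure is intrinsic to $x$ and therefore persists inside any subphase of $\mathcal P$ containing $x$, so $\mathcal J$ necessarily encounters a structural boundary and cannot simultaneously retain all three properties. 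This is essentially the contrapositive of the first step and closes the proposition.

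The main obstacle I anticipate is the first step: one must verify carefully that \emph{canonical filtration}, \emph{functorial defect control}, and \emph{finite termination} exhaust the conditions under which the complete collection of boundary detectors remains stable on $\mathcal J$. In particular, one has to argue that interaction-law functoriality is not an additional independent requirement but is already built into the notion of subphase inclusion preserving interaction relations, so that no hidden APT axiom is missed. Once this is articulated precisely, the sufficiency of the three detectors closes the argument and the rest is routine.
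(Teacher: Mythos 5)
Your second step is essentially the paper's proof verbatim: the paper also argues by contrapositive, takes $\mathcal J$ strictly containing $\mathcal I_{\mathcal P}$, selects $x \in \mathcal J \setminus \mathcal I_{\mathcal P}$, and observes that the defect or interaction data of $x$ fails to propagate canonically past the boundary depth, so $\mathcal J$ cannot satisfy all APT axioms at all depths. The paper does not bother with your first step at all. Strictly speaking, your first step proves something stronger than what is stated, namely that $\mathcal I_{\mathcal P}$ is a \emph{greatest} element (every subphase with the three listed properties is contained in it), not merely a \emph{maximal} one (no strict extension retains the properties), and you route it through the boundary-detector sufficiency theorem and Theorem~\ref{thm:existence-uniqueness-island}. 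That is a reasonable elaboration, and it converges on the same conclusion, but it is not needed for the proposition as worded and the paper omits it. The obstacle you flag at the end is a real one, and it applies to the paper's proof as well: the proposition is phrased in terms of ``full canonical filtration, functorial defect control, and finite termination,'' while the proof (yours and the paper's) quietly substitutes ``satisfies all axioms of Algebraic Phase Theory at every filtration depth,'' which additionally presupposes interaction-law functoriality; the paper simply treats these as interchangeable without comment, so your cautionary note identifies a genuine, if minor, slippage in the paper's own exposition rather than a defect in your approach.
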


\begin{proof}
By definition, $\mathcal I_{\mathcal P}$ contains precisely those elements of
$\mathcal P$ whose defect behavior and interaction laws satisfy the axioms of
Algebraic Phase Theory at every filtration depth. Let $\mathcal J$ be a subphase with
\[
\mathcal I_{\mathcal P} \subseteq \mathcal J \subseteq \mathcal P,
\]
and suppose that $\mathcal J$ strictly contains $\mathcal I_{\mathcal P}$.
Then there exists an element
\[
x \in \mathcal J \setminus \mathcal I_{\mathcal P}.
\]
Since $x \notin \mathcal I_{\mathcal P}$, the defect or interaction data
associated to $x$ fails to propagate canonically beyond the boundary depth.
Thus $\mathcal J$ cannot satisfy all axioms of Algebraic Phase Theory at all
filtration depths. Therefore no subphase strictly containing $\mathcal I_{\mathcal P}$ admits full
canonical filtration and functorial defect control, and $\mathcal I_{\mathcal
P}$ is maximal with respect to these properties.
\end{proof}

\begin{theorem}
\label{thm:islands-kernel}
The rigidity island $\mathcal I_{\mathcal P}$ is canonically isomorphic to the
kernel of the boundary quotient map
\[
\mathcal P \longrightarrow \mathcal B(\mathcal P).
\]
\end{theorem}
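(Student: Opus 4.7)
The plan is to reduce the statement directly to the identifications already established. Recall that in the proof of Theorem~\ref{thm:existence-uniqueness-island}, the rigidity island $\mathcal I_{\mathcal P}$ was defined as the maximal rigid subphase $\mathcal R$ appearing in the boundary exact sequence of Theorem~\ref{thm:boundary-exactness}. Since that sequence
\[
0 \longrightarrow \mathcal R \longrightarrow \mathcal P \longrightarrow \mathcal B(\mathcal P) \longrightarrow 0
\]
is exact at $\mathcal P$ in the intrinsic sense of phases, the inclusion $\mathcal R \hookrightarrow \mathcal P$ is, by the very definition of the quotient map, the kernel of $\mathcal P \to \mathcal B(\mathcal P)$. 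Composing with the equality $\mathcal I_{\mathcal P} = \mathcal R$ yields the desired canonical isomorphism.

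Concretely, I would carry this out in three steps. First, I would invoke Theorem~\ref{thm:existence-uniqueness-island} to identify $\mathcal I_{\mathcal P}$ with $\mathcal R$ as subphases of $\mathcal P$, and observe that this identification is canonical (uniqueness of the maximal rigid subphase). Second, I would spell out what ``kernel'' means in the intrinsic sense of phases: namely, the subphase of $\mathcal P$ mapped to zero in $\mathcal B(\mathcal P)$, which by construction of the quotient $\mathcal B(\mathcal P) = \mathcal P / \mathcal R$ is exactly $\mathcal R$. Third, I would note universality: any morphism $\mathcal J \to \mathcal P$ whose composition with the boundary quotient vanishes factors through $\mathcal R$, as already shown in the proof of Theorem~\ref{thm:existence-uniqueness-island} via the argument that every rigid subphase lies in the kernel of the boundary projection. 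This confirms that $\mathcal I_{\mathcal P}$ satisfies the universal property of the intrinsic kernel.

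The only nontrivial point, and the one I expect to need care, is the verification that the phrase ``kernel of the boundary quotient map'' is well posed in the category of phases, since the boundary exact sequence was emphasized in the remark following Theorem~\ref{thm:boundary-exactness} to be exact only in the intrinsic phase-theoretic sense, not in an abelian sense. I would handle this by defining the intrinsic kernel purely via the universal factorization property stated above, and then observing that this property is already verified in Theorems~\ref{thm:boundary-exactness} and~\ref{thm:existence-uniqueness-island}. No further algebraic input is required, so the result follows as a formal consequence of the two preceding theorems.
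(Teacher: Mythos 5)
Your proposal is correct and follows essentially the same route as the paper: identify $\ker(q)$ with $\mathcal R$ via the boundary exact sequence, then identify $\mathcal R$ with $\mathcal I_{\mathcal P}$. You are marginally more economical in citing the definition $\mathcal I_{\mathcal P} := \mathcal R$ from Theorem~\ref{thm:existence-uniqueness-island} directly, whereas the paper re-derives this equality by a double-inclusion argument using Proposition~\ref{prop:maximal-persistence}; your added remark on the intrinsic universal property of the kernel is a reasonable piece of care that the paper leaves implicit.
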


\begin{proof}
By Theorem~\ref{thm:boundary-exactness}, there is a canonical exact sequence
\[
0 \longrightarrow \mathcal R \longrightarrow \mathcal P
\longrightarrow \mathcal B(\mathcal P) \longrightarrow 0,
\]
and $\mathcal B(\mathcal P)=\mathcal P/\mathcal R$. Let
$q:\mathcal P \to \mathcal B(\mathcal P)$ denote the quotient map. By the
definition of the quotient, $\ker(q)=\mathcal R$. It remains to identify $\mathcal R$ with the rigidity island
$\mathcal I_{\mathcal P}$. By Proposition~\ref{prop:maximal-persistence},
$\mathcal I_{\mathcal P}$ is maximal among subphases of $\mathcal P$ admitting
full canonical filtration, functorial defect control, and finite termination.
The subphase $\mathcal R$ satisfies these properties by construction, hence
$\mathcal R \subseteq \mathcal I_{\mathcal P}$. Conversely, since
$\mathcal R$ is maximal among rigid subphases of $\mathcal P$ arising from the
boundary exact sequence, and $\mathcal I_{\mathcal P}$ is itself rigid, we have
$\mathcal I_{\mathcal P} \subseteq \mathcal R$. Therefore
$\mathcal R = \mathcal I_{\mathcal P}$. Thus
\[
\ker\!\left(\mathcal P \to \mathcal B(\mathcal P)\right)=\mathcal I_{\mathcal P},
\]
which is the claimed canonical isomorphism.
\end{proof}

\begin{remark}
Rigidity islands are not remnants of global rigidity, but intrinsic objects
associated with boundary failure. They provide distinguished base objects for
deformation theory, loci for controlled reconstruction beyond boundaries, and
anchors for obstruction phenomena and deformation stratification. Their
structural role is developed in the subsequent sections.
\end{remark}

\subsection{Classification of Rigidity Islands}

The existence of rigidity islands shows that canonical algebraic structure
persists locally even after global boundary failure.
The next question is whether these persistent substructures admit intrinsic
classification. In this section we show that rigidity islands are classified by intrinsic invariants internal to the rigidity island itself.
Although rigidity islands arise from phases with boundaries, their intrinsic
structure lies entirely in the boundary-free regime of Algebraic Phase Theory.

The key point is that rigidity islands retain full control of defect,
filtration, interaction, and termination.
As a result, their classification reduces to the same intrinsic invariants
that classify fully rigid phases, but applied locally beyond global boundary
failure.

\begin{remark}
We use the rigidity and equivalence-collapse principle established in
Algebraic Phase Theory~IV: finite terminating algebraic phases satisfying the
APT axioms are classified, up to intrinsic phase equivalence, by their defect
rank, termination length, and interaction signature.
\end{remark}

\begin{theorem}
\label{thm:classification-islands}
Let $\mathcal I$ be a rigidity island of an algebraic phase $\mathcal P$.
Then $\mathcal I$ is a strongly admissible algebraic phase and therefore falls
within the rigid classification framework of Algebraic Phase Theory~IV.
In particular, $\mathcal I$ is determined up to intrinsic phase equivalence by
its intrinsic defect rank, termination length, and induced interaction
signature.

Moreover, the boundary depth of $\mathcal I$ is trivial (equal to $0$ by
convention) and is independent of the boundary depth of $\mathcal P$.
\end{theorem}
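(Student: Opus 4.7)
The plan is to derive the theorem directly from the defining property of a rigidity island together with the classification principle recalled from Algebraic Phase Theory IV. The crucial observation is that the definition of a rigid subphase requires all APT axioms to propagate functorially at \emph{every} filtration depth. Since $\mathcal I$ is a rigidity island, it is in particular a rigid subphase, so no axiom fails at any depth internal to $\mathcal I$. By the definition of strong admissibility recalled in Section~2, this forces $\mathcal I$ to be strongly admissible: there is no stratum at which canonical higher propagation fails, hence no weak extension interval beyond defect generation is possible intrinsically for $\mathcal I$.

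I would then invoke the remark immediately preceding the theorem, which states the APT IV classification: finite terminating algebraic phases satisfying the APT axioms are classified up to intrinsic phase equivalence by defect rank, termination length, and interaction signature. Since $\mathcal I$ inherits finite termination from $\mathcal P$ (the canonical filtration of $\mathcal P$ restricts to a canonical filtration on $\mathcal I$ of length bounded by $L$, which terminates because $\mathcal I\hookrightarrow\mathcal P$ preserves defect data and filtration strata), and since I have just established strong admissibility, the classification applies verbatim to $\mathcal I$ with its intrinsic defect rank $d(\mathcal I)$, its termination length, and its induced interaction signature.

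For the second assertion, I would argue that the boundary depth of $\mathcal I$ is trivial by combining two facts already established in the paper. First, because all axioms propagate functorially at every depth of $\mathcal I$, there is no minimal index at which functorial propagation fails; by the convention adopted in the proof of the theorem on boundary inevitability (where the rigid case is assigned $k=0$), the boundary depth of $\mathcal I$ equals $0$. Second, independence from $k(\mathcal P)$ follows because the boundary depth of $\mathcal I$ is determined purely by the internal propagation behavior of $\mathcal I$, which by Theorem~\ref{thm:existence-uniqueness-island} is characterized intrinsically as the maximal rigid subphase; the depth at which the ambient phase $\mathcal P$ ceases to propagate structure is external to $\mathcal I$ and enters only through the boundary quotient $\mathcal B(\mathcal P)$, not through $\mathcal I$ itself.

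The main obstacle I anticipate is not technical but conceptual: I need to justify carefully that the rigidity of $\mathcal I$ with respect to the ambient structure of $\mathcal P$ is strong enough to yield strong admissibility \emph{as an independent phase}, rather than merely functorial behavior relative to its embedding. This is resolved by noting that the rigid subphase definition requires the axioms to hold on $\mathcal I$ at every depth, not only relative to the inclusion $\iota:\mathcal I\hookrightarrow\mathcal P$; the inclusion preserves defect, filtration, and interaction data, so intrinsic and embedded propagation coincide on $\mathcal I$, and strong admissibility transfers cleanly. With that point settled, the remaining steps are direct applications of prior results.
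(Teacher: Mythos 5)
Your proof is correct and follows essentially the same route as the paper: deduce strong admissibility of $\mathcal I$ directly from the defining property that all APT axioms hold at every depth, then invoke the Paper~IV classification principle recalled in the preceding remark, and finally observe that trivial boundary depth is an internal consequence independent of the ambient phase. Your explicit treatment of the potential subtlety about rigidity-as-subphase versus strong admissibility-as-standalone-phase is a useful clarification that the paper passes over implicitly, but it does not change the argument.
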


\begin{proof}
Let $\mathcal P$ be an algebraic phase and let
$\mathcal I=\mathcal I_{\mathcal P}\subseteq\mathcal P$
be its (unique maximal) rigidity island. By definition, $\mathcal I$ satisfies all axioms of Algebraic Phase Theory at
every filtration depth.
Consequently, $\mathcal I$ is itself a finite, terminating algebraic phase with
canonical defect structure and canonical filtration
\[
\mathcal I=\mathcal I^{(0)}\supseteq \mathcal I^{(1)}\supseteq\cdots\supseteq
\mathcal I^{(L_{\mathcal I})}=0.
\]
In particular, $\mathcal I$ is strongly admissible and hence lies in the
boundary-free regime of Algebraic Phase Theory. The interaction law on $\mathcal I$ is the restriction of the interaction law on
$\mathcal P$.
Hence the induced interaction signature $\Sigma(\mathcal I)$ is intrinsic to
$\mathcal I$.
Likewise, the intrinsic defect rank $d(\mathcal I)$ and termination length
$L_{\mathcal I}$ are computed entirely inside $\mathcal I$ and do not reference
the ambient phase.  

Now let $\mathcal I$ and $\mathcal J$ be rigidity islands (possibly arising from
different ambient phases) with
\[
d(\mathcal I)=d(\mathcal J),\qquad
L_{\mathcal I}=L_{\mathcal J},\qquad
\Sigma(\mathcal I)=\Sigma(\mathcal J).
\]
Since both satisfy the full APT axioms at all depths, they lie in the
boundary-free regime.
By the rigidity and equivalence-collapse principle of
Algebraic Phase Theory~IV, these invariants determine the phase up to intrinsic phase equivalence.
Hence $\mathcal I\simeq\mathcal J$.

Finally, since $\mathcal I$ is strongly admissible, it exhibits no boundary
failure and therefore has trivial boundary depth.
Boundary depth records the minimal filtration level at which an APT axiom fails
in the ambient phase.
The boundary depth of $\mathcal I$ is determined entirely by its internal
structure and does not depend on the boundary depth of the ambient phase
$\mathcal P$.
Distinct ambient phases may therefore contain intrinsically equivalent rigidity
islands while having different boundary depths.
\end{proof}

\begin{remark}
Although the classification relies on rigidity results from Paper~IV, the
application here is local rather than global.
The invariants classify subphases that persist beyond boundary failure, not
entire phases.
This local use of rigidity does not appear in earlier papers.
\end{remark}

\begin{corollary}
For fixed defect rank, termination length, and interaction signature,
rigidity islands are determined up to intrinsic phase equivalence within the
rigid classification framework of Algebraic Phase Theory~IV.
\end{corollary}

\begin{proof}
By Axiom V of Algebraic Phase Theory, every algebraic phase admits a canonical
defect-induced filtration of finite length.
As a consequence, and by the rigidity classification of Algebraic Phase
Theory~IV, only finitely many rigid subphases can occur as terminal components
of this filtration.

Each rigidity island is such a terminal rigid component and is classified, up
to intrinsic phase equivalence, by its defect rank, termination length, and
interaction signature. For fixed values of these invariants, only finitely many rigidity islands can
occur.
\end{proof}

\begin{theorem}
\label{thm:stability-islands}
Rigidity islands are stable under all admissible deformations.
Any deformation that would change the rigidity island must cross a structural
boundary, and such deformations are not admissible.
\end{theorem}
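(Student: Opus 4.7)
The plan is to reduce stability to the kernel characterization of the rigidity island (Theorem~\ref{thm:islands-kernel}) together with the functoriality of boundary exact sequences (Lemma~\ref{lem:boundary-functoriality}) and the classification invariants (Theorem~\ref{thm:classification-islands}). Within the standing framework, an admissible deformation $F \colon \mathcal P \rightsquigarrow \mathcal P'$ is a phase morphism (or one-parameter family of such) that is boundary-controlled in the sense already isolated in the introduction: it preserves defect data, the canonical filtration, and the interaction signature on every stratum strictly below the boundary depth $k$, and acts nontrivially only through the boundary quotient $\mathcal B(\mathcal P)$.

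First I would apply Lemma~\ref{lem:boundary-functoriality} to $F$ to obtain a morphism of boundary exact sequences, and hence an induced canonical morphism $F_{\mathrm{rig}} \colon \mathcal I_{\mathcal P} \to \mathcal I_{\mathcal P'}$ on kernels via Theorem~\ref{thm:islands-kernel}. Since $F$ preserves all intrinsic structure strictly below the boundary, and since the classifying invariants of $\mathcal I_{\mathcal P}$ (defect rank, termination length, and induced interaction signature) are computed internally to $\mathcal I_{\mathcal P}$, these invariants are transported intact by $F_{\mathrm{rig}}$. Theorem~\ref{thm:classification-islands} then forces $F_{\mathrm{rig}}$ to be an intrinsic phase equivalence, which is the desired stability statement.

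For the contrapositive clause I would argue by contraposition. Suppose a deformation alters the rigidity island, i.e.\ $\mathcal I_{\mathcal P}\not\simeq\mathcal I_{\mathcal P'}$ intrinsically. By Theorem~\ref{thm:classification-islands} at least one of defect rank, termination length, or interaction signature must change. Since these invariants are determined by the strata $\mathcal P^{(i)}$ with $i<k$, any such change perturbs data on the pre-boundary strata, and hence by minimality of $k$ forces the minimal index of axiomatic failure to shift. This contradicts the boundary-depth invariance already established under phase equivalence, completion, and intrinsic localization, so the deformation cannot remain on one side of the boundary. Hence any island-altering deformation must cross a structural boundary and therefore fails the boundary-control condition built into admissibility.

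The main obstacle will be pinning down the precise formal interpretation of \emph{admissible deformation} consistently with the boundary calculus, since formal deformation theory is developed only in the following sections. I would address this by taking as the working definition that an admissible deformation is a phase morphism preserving boundary depth and restricting to an intrinsic equivalence on every filtration stratum strictly below $k$; with this convention, the functorial kernel argument above closes cleanly and the contrapositive statement becomes a direct consequence of the invariance theorems already proved.
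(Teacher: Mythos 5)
Your proposal reaches the right conclusion but by a considerably heavier route than the paper uses, and it is worth seeing why. The paper's own proof is essentially definitional: it takes ``admissible deformation'' to \emph{mean} a deformation that does not cross a structural boundary, observes that such deformations therefore preserve the intrinsic invariants (defect rank, termination length, interaction signature) of the rigidity island, and then invokes Theorem~\ref{thm:classification-islands} to conclude that the island is unchanged up to intrinsic phase equivalence. That is the whole argument --- four sentences, no appeal to the boundary exact sequence or the kernel characterization. Your version, by contrast, routes through Lemma~\ref{lem:boundary-functoriality} and Theorem~\ref{thm:islands-kernel} to produce an explicit induced morphism $F_{\mathrm{rig}}$ on kernels and then argues it is an equivalence via the classification theorem. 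This is valid and is arguably more informative, since it exhibits the island stability as a corollary of the functoriality of the whole boundary exact sequence rather than just of the classifying invariants; but it also front-loads machinery the paper saves for elsewhere. Your contrapositive paragraph is likewise sound but again more than the paper does: the paper simply folds the contrapositive into the definitional observation that boundary-crossing is excluded by admissibility.

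You were right to flag the ambiguity in ``admissible deformation'' as the real crux. The paper resolves it exactly as you suspected it would --- by treating boundary-avoidance as the \emph{definition} of admissibility --- which is why its proof can be so short and nearly tautological. Your proposed working definition (phase morphism preserving boundary depth and restricting to an intrinsic equivalence below depth $k$) is consistent with this, but it is a stronger-looking hypothesis than the paper assumes; under the paper's definition the preservation of the sub-boundary strata and invariants is a \emph{consequence}, not a hypothesis. The practical upshot: your argument is correct and closes cleanly, but if you want to match the paper's intent, lead with the definitional fact that admissibility excludes boundary-crossing and let everything else fall out of Theorem~\ref{thm:classification-islands}; the kernel and functoriality machinery, while not wrong, is not load-bearing here.
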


\begin{proof}
Admissible deformations are, by definition, those that do not cross a structural
boundary. Such deformations preserve all intrinsic invariants of the rigidity
island, including defect rank, termination length, and interaction signature.
By Theorem~\ref{thm:classification-islands}, these invariants determine the
rigidity island up to intrinsic phase equivalence. Therefore no admissible
deformation can change the rigidity island.
\end{proof}

\begin{remark}
Even when an algebraic phase fails at a boundary, its remaining rigid structure
does not become chaotic. The phase contains a single rigidity island, and this
rigid part is completely controlled and classifiable. Beyond the boundary the remaining structure is organized into the rigidity
island together with the obstruction data recorded by the boundary quotient.
\end{remark}

\subsection{Boundary Quotients and Obstruction Objects}

The previous sections showed that structural boundaries are intrinsic, finitely
detectable, stratified, and always come with a persistent rigidity island.
In this section we explain the role of the boundary quotient. This object
records everything that fails once the boundary is reached.
The key point is that the boundary quotient is not a leftover piece of broken
structure. It is the universal obstruction object through which boundary failures are
encoded within the boundary calculus.
This gives a direct and intrinsic correspondence between boundaries and the
obstructions that arise when one tries to move past them.

\begin{theorem}
\label{thm:boundary-obstruction}
Let $\mathcal P$ be a terminating algebraic phase with boundary quotient
$\mathcal B(\mathcal P)$. Then $\mathcal B(\mathcal P)$ detects whether a structural boundary is activated. More precisely:
\begin{itemize}
  \item If $\mathcal B(\mathcal P)=0$, then no boundary is encountered and
        $\mathcal P$ remains fully rigid (no split occurs).
  \item If $\mathcal B(\mathcal P)\neq 0$, then the boundary is activated and
        $\mathcal P$ admits the associated decomposition into its rigidity island and boundary
quotient.
\end{itemize}
\end{theorem}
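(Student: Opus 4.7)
The plan is to reduce the statement entirely to the boundary exact sequence of Theorem~\ref{thm:boundary-exactness} together with the kernel identification of Theorem~\ref{thm:islands-kernel}. Both implications concern only the image and kernel of the canonical quotient map $q \colon \mathcal P \to \mathcal B(\mathcal P)$, so the proof is essentially a structural diagram chase in the category of phases, with the maximality result Proposition~\ref{prop:maximal-persistence} used to control which elements can lie outside $\mathcal I_{\mathcal P}$.

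For the vanishing direction, I would assume $\mathcal B(\mathcal P)=0$ and read off the exact sequence $0 \to \mathcal R \to \mathcal P \to 0 \to 0$, forcing the inclusion $\mathcal R \hookrightarrow \mathcal P$ to be an isomorphism. Invoking Theorem~\ref{thm:islands-kernel} gives $\mathcal P = \mathcal R = \mathcal I_{\mathcal P}$, and Theorem~\ref{thm:classification-islands} then shows $\mathcal P$ is strongly admissible with conventional boundary depth $0$. In particular, no nontrivial split is produced because $\mathcal R$ and $\mathcal P$ already coincide, and this is consistent with the degenerate convention fixed in the proof of Theorem~3.5 that the rigid case carries $k=0$.

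For the nontriviality direction, I would assume $\mathcal B(\mathcal P) \neq 0$, which forces the strict inclusion $\mathcal R \subsetneq \mathcal P$. Any $x \in \mathcal P \setminus \mathcal R$ witnesses, by Proposition~\ref{prop:maximal-persistence}, the failure of at least one axiom of Algebraic Phase Theory to propagate past the boundary depth, so a structural boundary is indeed activated in the sense of Section~3. The promised decomposition is then the boundary exact sequence itself: $\mathcal P$ is canonically recovered from the pair $(\mathcal I_{\mathcal P},\mathcal B(\mathcal P))$ together with the extension datum carried by $q$, and this pairing is functorial by Lemma~\ref{lem:boundary-functoriality} and Corollary~\ref{cor:boundary-obstruction-functor}.

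The hard part will be giving precise meaning to the word \emph{splits} in the second bullet, since the remark following Theorem~\ref{thm:boundary-exactness} explicitly warns that the boundary exact sequence is not abelian and need not admit a set-theoretic section $\mathcal B(\mathcal P) \to \mathcal P$. I would therefore avoid constructing any such section and instead phrase the split purely intrinsically: $\mathcal I_{\mathcal P}$ is characterized as the universal kernel of obstruction, $\mathcal B(\mathcal P)$ as the universal cokernel, and the assertion reduces to the claim that these two universal data, together with $q$, suffice to reconstruct $\mathcal P$ inside the category of phases. Making this reconstruction statement rigorous without importing additive or abelian structure external to the APT axioms is the genuine delicate step, and I would handle it by appealing only to the maximality and functoriality properties already established for $\mathcal I_{\mathcal P}$ and $\mathcal B(\mathcal P)$.
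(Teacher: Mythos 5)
Your proof follows essentially the same route as the paper's: both directions reduce to the boundary exact sequence of Theorem~\ref{thm:boundary-exactness} and the kernel identification of Theorem~\ref{thm:islands-kernel}, with the vanishing case forcing $\mathcal I_{\mathcal P}=\mathcal P$ and the nonvanishing case exhibiting elements outside $\mathcal I_{\mathcal P}$ that witness boundary failure. Your caution about the meaning of \emph{splits} is well placed but does not diverge from the paper, which likewise constructs no section of $q$ and uses the word only to refer to the exact decomposition $0 \to \mathcal I_{\mathcal P} \to \mathcal P \to \mathcal B(\mathcal P) \to 0$ itself.
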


\begin{proof}
By Theorem~\ref{thm:boundary-exactness}, $\mathcal P$ sits in a canonical exact
sequence
\[
0 \longrightarrow \mathcal I_{\mathcal P}
\longrightarrow \mathcal P
\longrightarrow \mathcal B(\mathcal P)
\longrightarrow 0,
\]
where $\mathcal I_{\mathcal P}$ is the rigidity island and
$\mathcal B(\mathcal P)=\mathcal P / \mathcal I_{\mathcal P}$ is the boundary
quotient. If $\mathcal B(\mathcal P)=0$, then the quotient map $\mathcal P \to \mathcal B(\mathcal P)$
is the zero map, so its kernel is all of $\mathcal P$.
By Theorem~\ref{thm:islands-kernel}, the kernel of this map is
$\mathcal I_{\mathcal P}$. Hence $\mathcal I_{\mathcal P}=\mathcal P$, so the whole phase is rigid and no
boundary appears.

If $\mathcal B(\mathcal P)\neq 0$, then the quotient map
$\mathcal P \to \mathcal B(\mathcal P)$ is nonzero.
There are elements of $\mathcal P$ whose image in $\mathcal B(\mathcal P)$ is
nontrivial.
These elements do not lie in $\mathcal I_{\mathcal P}$ and witness the failure
of the Algebraic Phase Theory axioms beyond the boundary depth.
In this case $\mathcal P$ decomposes into its maximal rigid part
$\mathcal I_{\mathcal P}$ and a nontrivial boundary quotient
$\mathcal B(\mathcal P)$, so a structural boundary is activated and a split
occurs.
\end{proof}

\begin{remark}
The obstruction theory in this setting is purely structural. Nothing external is
introduced: every obstruction is detected internally by the boundary quotient,
which captures the failure of the axioms at the boundary.
\end{remark}

\begin{corollary}
A structural boundary is activated precisely when the boundary quotient
$\mathcal B(\mathcal P)$ is nonzero within the standing admissibility
framework. Equivalently, a
split occurs if and only if some element of $\mathcal P$ maps nontrivially into
$\mathcal B(\mathcal P)$.
\end{corollary}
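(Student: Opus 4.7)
The plan is to read the corollary as an immediate restatement of Theorem~\ref{thm:boundary-obstruction}, combined with the elementary fact that the quotient map $q\colon \mathcal P \to \mathcal B(\mathcal P)$ appearing in the canonical boundary exact sequence is surjective. No new structural machinery is required; one simply translates between the intrinsic notion of boundary activation and the algebraic condition that $\mathcal B(\mathcal P)$ is nonzero.

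First I would fix notation: let $q\colon \mathcal P \to \mathcal B(\mathcal P)$ denote the canonical quotient map from the boundary exact sequence of Theorem~\ref{thm:boundary-exactness}, and recall that by Theorem~\ref{thm:islands-kernel} the kernel of $q$ is precisely the rigidity island $\mathcal I_{\mathcal P}$. The phrase \emph{$\mathcal P$ reaches and crosses its structural boundary} is, by definition, the situation in which the boundary is activated in the sense of Theorem~\ref{thm:boundary-obstruction}, i.e.\ in which the canonical decomposition splits off a nontrivial boundary part rather than reducing to the purely rigid case $\mathcal P = \mathcal I_{\mathcal P}$.

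Next I would invoke Theorem~\ref{thm:boundary-obstruction} in both directions to obtain the first equivalence. If $\mathcal B(\mathcal P)=0$, that theorem gives $\mathcal P = \mathcal I_{\mathcal P}$, so no boundary is crossed. Conversely, if $\mathcal B(\mathcal P)\neq 0$, the same theorem guarantees that the boundary is activated and the canonical split into $\mathcal I_{\mathcal P}$ and $\mathcal B(\mathcal P)$ is nontrivial. This establishes the ``exactly when'' clause.

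Finally, for the \emph{equivalently} clause, I would use surjectivity of $q$: the image $q(\mathcal P)$ equals $\mathcal B(\mathcal P)$, so $\mathcal B(\mathcal P)\neq 0$ holds if and only if there exists some $x\in \mathcal P$ with $q(x)\neq 0$ in $\mathcal B(\mathcal P)$. This is a purely formal observation about quotient maps in the intrinsic sense of phases, using only the exactness at $\mathcal B(\mathcal P)$ in the boundary sequence. There is no real obstacle here; the only point requiring care is to emphasise that the equivalence is intrinsic, not dependent on presentation, which follows from the canonicity of $q$ established in Theorem~\ref{thm:boundary-exactness} and Lemma~\ref{lem:boundary-functoriality}.
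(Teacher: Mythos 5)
Your argument is essentially the paper's own: both proofs read the corollary directly off the boundary exact sequence of Theorem~\ref{thm:boundary-exactness}, identify the two cases via Theorem~\ref{thm:boundary-obstruction} and the kernel description of Theorem~\ref{thm:islands-kernel}, and translate $\mathcal B(\mathcal P)\neq 0$ into the existence of an element with nonzero image under the quotient map. The only cosmetic difference is that you foreground the surjectivity of $q$ as the step justifying the ``equivalently'' clause, which the paper leaves implicit.
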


\begin{proof}
By Theorem~\ref{thm:boundary-exactness}, $\mathcal P$ fits into the canonical
exact sequence
\[
0 \longrightarrow \mathcal I_{\mathcal P}
\longrightarrow \mathcal P
\longrightarrow \mathcal B(\mathcal P)
\longrightarrow 0,
\]
where $\mathcal I_{\mathcal P}$ is the rigidity island and
$\mathcal B(\mathcal P)$ is the boundary quotient. If $\mathcal B(\mathcal P)=0$, then the quotient map $\mathcal P\to\mathcal B(\mathcal P)$
is the zero map, so every element of $\mathcal P$ maps to zero.
In this case $\mathcal I_{\mathcal P}=\mathcal P$, no boundary contributes
anything, and no split occurs.

If $\mathcal B(\mathcal P)\neq 0$, then the quotient map is nonzero, so there
exists at least one element of $\mathcal P$ whose image in $\mathcal B(\mathcal P)$
is nontrivial. This is exactly the situation in which the structural boundary is activated and
$\mathcal P$ splits into its rigid part $\mathcal I_{\mathcal P}$ and its
nontrivial boundary quotient.
\end{proof}

\begin{theorem}
\label{thm:functorial-obstruction}
The assignment
\[
\mathcal P \longmapsto \mathcal B(\mathcal P)
\]
is functorial: every phase morphism $F\colon \mathcal P \to \mathcal Q$
induces a canonical morphism
\[
\mathcal B(F)\colon \mathcal B(\mathcal P) \longrightarrow \mathcal B(\mathcal Q)
\]
compatible with the boundary exact sequences. In particular, boundary obstructions are preserved under phase morphisms.
\end{theorem}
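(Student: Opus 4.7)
The plan is to deduce this theorem directly from the structural machinery already set up in Lemma~\ref{lem:boundary-functoriality} and Corollary~\ref{cor:boundary-obstruction-functor}. The content of the present statement is essentially to upgrade those results to a full functoriality claim with explicit compatibility, so the proof will be short and will proceed by invoking the previous constructions and checking that the induced maps respect identities and composition.

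First, I would let $F\colon\mathcal P\to\mathcal Q$ be a phase morphism and apply Lemma~\ref{lem:boundary-functoriality} to obtain a commutative diagram of boundary exact sequences. This already gives a restriction $F|_{\mathcal I_{\mathcal P}}\colon\mathcal I_{\mathcal P}\to\mathcal I_{\mathcal Q}$ (using Theorem~\ref{thm:islands-kernel} to identify the rigid subphase $\mathcal R$ with the rigidity island $\mathcal I_{\mathcal P}$), and, by the universal property of the quotient applied to $q_{\mathcal Q}\circ F$, a unique induced morphism $\mathcal B(F)\colon\mathcal B(\mathcal P)\to\mathcal B(\mathcal Q)$ making the right-hand square commute. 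Uniqueness of this factorization is what guarantees that $\mathcal B(F)$ is canonical and not an artifact of any choice.

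Second, I would verify the two functorial axioms. For the identity $\mathrm{id}_{\mathcal P}$, the induced map on the quotient is the unique morphism through which $q_{\mathcal P}$ itself factors, hence $\mathcal B(\mathrm{id}_{\mathcal P})=\mathrm{id}_{\mathcal B(\mathcal P)}$. For a composition $G\circ F$, both $\mathcal B(G)\circ\mathcal B(F)$ and $\mathcal B(G\circ F)$ factor $q_{\mathcal R}\circ G\circ F$ through $q_{\mathcal P}$, so uniqueness of the factorization forces them to coincide. Compatibility with the boundary exact sequences is then just the commutativity of the diagram already produced by Lemma~\ref{lem:boundary-functoriality}, and preservation of obstructions is an immediate consequence: if $[x]\in\mathcal B(\mathcal P)$ is nontrivial, then $\mathcal B(F)[x]=[F(x)]$ records the image obstruction in $\mathcal Q$.

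The main obstacle, such as it is, is conceptual rather than technical: I need to make sure the "universal property of the quotient" is legitimately available in the intrinsic phase-theoretic sense, since, as emphasized in the remark after Theorem~\ref{thm:boundary-exactness}, the boundary exact sequence is not abelian but only exact in the sense of phases. I would therefore state explicitly that the quotient $\mathcal P/\mathcal I_{\mathcal P}$ is used only via the property that morphisms out of $\mathcal P$ vanishing on $\mathcal I_{\mathcal P}$ factor uniquely through it, which is guaranteed by the construction of $\mathcal B(\mathcal P)$ in the category of phases. Once this is pinned down, the two functorial axioms follow purely from uniqueness of factorization, and no further axiomatic input is required.
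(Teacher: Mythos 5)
Your proposal is correct and follows essentially the same route as the paper: invoke Lemma~\ref{lem:boundary-functoriality} to obtain the commuting diagram of boundary exact sequences, define $\mathcal B(F)$ by factoring $q_{\mathcal Q}\circ F$ through the quotient, and check identities and composition via uniqueness of that factorization. Your extra remark that the ``universal property of the quotient'' must be understood in the intrinsic phase-theoretic sense (since the exact sequence is not abelian) is a legitimate point the paper glosses over; the paper's own verification of identity and composition is terser but rests on the same canonicity of the factorization you make explicit.
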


\begin{proof}
By Lemma~\ref{lem:boundary-functoriality}, a phase morphism
$F\colon \mathcal P \to \mathcal Q$ restricts to a morphism
$F\vert_{\mathcal I_{\mathcal P}}\colon \mathcal I_{\mathcal P} \to \mathcal I_{\mathcal Q}$
between the rigidity islands and induces a compatible morphism on boundary
quotients
\[
\mathcal B(F)\colon \mathcal B(\mathcal P) \longrightarrow \mathcal B(\mathcal Q)
\]
making the diagram of boundary exact sequences commute. The identity morphism on a phase induces the identity on its boundary quotient,
and composition of phase morphisms induces composition on the corresponding
boundary quotient maps.
Thus $\mathcal P \mapsto \mathcal B(\mathcal P)$ extends to a functor.

Since boundary quotients encode obstruction data, this shows that boundary
obstructions are carried functorially along phase morphisms.
\end{proof}

\begin{remark}
Obstruction behaviour within the boundary calculus is detected purely by the boundary quotient. No
external obstruction theory is needed: the failure of the axioms is encoded
directly in the structure of the boundary itself.
\end{remark}

\begin{corollary}
The type of boundary failure determines which part of the phase fails and which part persists as the
rigidity island within the boundary calculus.
\end{corollary}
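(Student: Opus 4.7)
The plan is to combine three ingredients already established in the excerpt: the canonical stratification $\partial\mathcal P = \bigsqcup_i \partial_i\mathcal P$ by failure type, the identification of the rigidity island as the kernel of the boundary quotient map (Theorem~\ref{thm:islands-kernel}), and the intrinsic characterization of rigid elements via functorial propagation of the APT axioms at every filtration depth.

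First, I would recall that boundary data decomposes canonically, with each stratum $\partial_i\mathcal P$ corresponding to the minimal failure of a specific APT axiom at the boundary depth $k$, and that these strata are disjoint because the axioms are logically independent and failure strictly below depth $k$ is precluded by the minimality clause in the definition of boundary depth. Next, using Theorem~\ref{thm:islands-kernel}, I would identify $\mathcal P \setminus \mathcal I_{\mathcal P}$ with the set of elements of $\mathcal P$ mapping nontrivially into $\mathcal B(\mathcal P)$, that is, precisely those whose defect or interaction data obstructs canonical propagation past depth $k$.

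The key step is then to lift the boundary stratification to a canonical partition of the non-rigid part of $\mathcal P$: each obstructing element is assigned the unique minimal axiom whose functorial propagation it disrupts, giving a canonical correspondence between failure types and components of the broken part. Consequently, the active failure strata determine exactly which subset of $\mathcal P$ breaks, while the complement $\mathcal I_{\mathcal P}$ is the surviving rigid part. Functoriality of the stratification and of the rigidity island assignment (Lemma~\ref{lem:boundary-functoriality} and Theorem~\ref{thm:existence-uniqueness-island}) ensures the decomposition is intrinsic and invariant under phase equivalence, so the correspondence between failure type and split is canonical rather than presentation-dependent.

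The main obstacle will be confirming that the assignment of an obstructing element to a unique failure type is genuinely well-defined in cases where several axioms could in principle fail simultaneously at the same element. I would address this by invoking both the minimality clause in the definition of boundary depth and the disjointness built into the canonical stratification theorem, which together ensure that each non-rigid element carries exactly one canonical failure type and hence that the partition of $\mathcal P$ into rigid and broken parts is determined purely by the active failure modes.
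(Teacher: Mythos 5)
Your proposal follows essentially the same route as the paper's own proof: invoke the canonical stratification $\partial\mathcal P = \bigsqcup_i \partial_i\mathcal P$, identify the rigidity island with the kernel of the boundary quotient map, and conclude that each failure stratum singles out the obstructing elements while the complement persists as $\mathcal I_{\mathcal P}$. The only differences are cosmetic: you cite Theorem~\ref{thm:islands-kernel} where the paper cites Theorem~\ref{thm:existence-uniqueness-island} (the two are directly linked), and you add an explicit well-definedness discussion and a functoriality remark that the paper leaves implicit.
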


\begin{proof}
Let $\partial \mathcal P = \bigsqcup_i \partial_i \mathcal P$ be the canonical
stratification of the boundary. By construction, each stratum $\partial_i
\mathcal P$ corresponds to the earliest failure of a specific axiom of
Algebraic Phase Theory.

Fix such a stratum $\partial_i \mathcal P$. The associated boundary quotient
$\mathcal B_i(\mathcal P)$ records the elements of $\mathcal P$ whose
behaviour fails in the $i$-th sense. In particular, these elements are
precisely the ones that do not extend past the boundary and therefore form the
non-rigid part of the phase.

On the other hand, Theorem~\ref{thm:existence-uniqueness-island} identifies a
unique maximal rigid subphase $\mathcal I_(\mathcal P)$ consisting of all
elements whose structure continues to satisfy the axioms beyond the boundary.
These are the elements whose image in the boundary quotient is zero.

Thus the stratum $\partial_i \mathcal P$ determines exactly which elements of
$\mathcal P$ fail at the boundary, that is, the elements that represent the
nonzero part of $\mathcal B_i(\mathcal P)$, and it also determines which
elements persist beyond the boundary, namely those contained in the rigidity
island $\mathcal I_(\mathcal P)$.
\end{proof}

This stratification result forms the structural backbone for the boundary
calculus developed in subsequent sections and underlies the obstruction and
deformation theory of algebraic phases.

Since only finitely many axioms of Algebraic Phase Theory can fail, the
boundary quotient admits at most five canonical components
$\mathcal B_i(\mathcal P)$, one for each possible failure type.

\begin{theorem}
\label{thm:finite-boundary-decomposition}
Let $\mathcal P$ be a terminating algebraic phase with structural boundary
\[
\partial \mathcal P = \bigsqcup_{i=1}^m \partial_i \mathcal P.
\]
Then the boundary quotient $\mathcal B(\mathcal P)$ admits a canonical
finite stratified decomposition
\[
\mathcal B(\mathcal P) = \bigsqcup_{i=1}^m \mathcal B_i(\mathcal P),
\]
where each component $\mathcal B_i(\mathcal P)$ is the boundary quotient
associated to the stratum $\partial_i \mathcal P$.

In particular, $m$ is bounded above by the number of independent APT failure
types, and hence $m \le 5$.
\end{theorem}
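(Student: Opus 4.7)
The plan is to transfer the canonical boundary stratification
$\partial\mathcal P=\bigsqcup_{i=1}^m \partial_i\mathcal P$
through the quotient construction of Theorem~\ref{thm:boundary-exactness},
and then bound the number of components by the number of independent APT
failure types. First I would define $\mathcal B_i(\mathcal P)$ as the image
in $\mathcal B(\mathcal P)$ of the subphase of $\mathcal P$ generated by those
elements whose failure of functorial propagation at depth $k$ is assigned to
the stratum $\partial_i\mathcal P$ under the canonical stratification. By the
earlier stratification theorem, this assignment is canonical and functorial,
since each boundary element is labelled by the unique minimal axiom whose
functorial propagation it first violates at depth~$k$.

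Next, I would establish the canonical isomorphism
$\mathcal B(\mathcal P)\cong \bigsqcup_{i=1}^m \mathcal B_i(\mathcal P)$.
Every nonzero class in $\mathcal B(\mathcal P)$ is represented by an element of
$\mathcal P\setminus\mathcal I_{\mathcal P}$, and by
Theorem~\ref{thm:islands-kernel} such an element lies outside the rigidity
island. The disjointness of the strata $\partial_i\mathcal P$ then concentrates
every such class in a unique component, giving both exhaustion and mutual
disjointness of the $\mathcal B_i(\mathcal P)$ inside $\mathcal B(\mathcal P)$.
Canonicity of the decomposition follows from canonicity of both the
stratification and of the quotient by $\mathcal I_{\mathcal P}$.

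For the bound $m\le 5$, I would observe that APT consists of Axioms~I--V and
that each stratum records the minimal failure of exactly one such axiom. Since
the stratification indexes failures by axiom type and the axioms are logically
independent, the number of strata is at most the number of axioms, hence
$m\le 5$.

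The main obstacle I expect is the passage from the disjoint stratification of
$\partial\mathcal P$ to an intrinsic decomposition of the quotient object
$\mathcal B(\mathcal P)$. Phases do not form an abelian category, so the
symbol ``$\bigsqcup$'' must be interpreted structurally rather than as an
additive direct sum: the $\mathcal B_i(\mathcal P)$ should be canonical
subquotients whose joint universal property reconstructs
$\mathcal B(\mathcal P)$. Ensuring that this intrinsic decomposition is
compatible with the functoriality established in
Theorem~\ref{thm:functorial-obstruction} requires checking that phase morphisms
respect the assignment of boundary elements to their failure types, which
reduces to the functoriality part of the stratification theorem.
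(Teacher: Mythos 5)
Your proposal is correct and follows essentially the same route as the paper: push the canonical stratification $\partial\mathcal P=\bigsqcup_i\partial_i\mathcal P$ through the quotient by $\mathcal I_{\mathcal P}$, use disjointness and exhaustion of the strata to obtain the decomposition, and bound $m$ by the number of APT axioms. The only minor difference is presentational—you realize each $\mathcal B_i(\mathcal P)$ as the image in $\mathcal B(\mathcal P)$ of the subphase generated by elements with failure type $i$, while the paper constructs each $\mathcal B_i(\mathcal P)$ directly as the boundary quotient of the stratum $\partial_i\mathcal P$ and then invokes the universal property; these yield the same components.
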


\begin{proof}
By the boundary stratification theorem, the boundary of $\mathcal P$ admits a
canonical decomposition
\[
\partial \mathcal P \;=\; \bigsqcup_{i=1}^m \partial_i \mathcal P,
\]
where each stratum $\partial_i \mathcal P$ corresponds to a minimal
failure mode of a single axiom of Algebraic Phase Theory.

For each $i$, let $\mathcal B_i(\mathcal P)$ denote the boundary quotient
constructed from the boundary data lying in $\partial_i \mathcal P$.
By construction, $\mathcal B_i(\mathcal P)$ records the non-rigid
behavior and obstruction classes arising from the $i$th failure type, and
for $i \neq j$ the corresponding failure modes are disjoint.

The global boundary quotient $\mathcal B(\mathcal P)$ is obtained by collapsing
all boundary data of $\mathcal P$.
Since the boundary data decompose into the strata
$\partial_i \mathcal P$, and each $\mathcal B_i(\mathcal P)$ captures
the contribution of the $i$th stratum, the universal property of the boundary
quotient yields the induced stratified decomposition
\[
\mathcal B(\mathcal P) = \bigsqcup_{i=1}^m \mathcal B_i(\mathcal P).
\]

Each stratum $\partial_i \mathcal P$ is determined by the failure of a single
APT axiom.
There are only finitely many axioms, and therefore only finitely many possible
failure types.
In particular, the number $m$ of nonempty strata is bounded above by the number
of independent failure types, so $m \le 5$.
\end{proof}

\begin{remark}
The decomposition
\[
\mathcal B(\mathcal P) \;\cong\; \bigsqcup_{i=1}^m \mathcal B_i(\mathcal P)
\]
is a structural disjoint union of distinct boundary components.
Each $\mathcal B_i(\mathcal P)$ isolates a distinct boundary failure type, and
no additional algebraic or interaction structure is implied between the
components.  
The decomposition reflects only the canonical stratification of the boundary.
\end{remark}

\section{Predictive Boundary Calculus and Deformation Theory}

Before turning to deformation theory, it is useful to understand the predictive
strength of the boundary calculus. The constructions developed above
(boundary depth, stratification, rigidity islands, and the boundary quotient)
do more than describe the behaviour of a phase after the fact. They determine
in advance which structural phenomena must occur and which cannot occur. The
next subsection makes this predictive role explicit.

\subsection{Predictive Power of the Boundary Calculus}

The preceding sections assemble what we call the \emph{boundary calculus}: the
structural machinery consisting of boundary depth, boundary stratification,
rigidity islands, and the boundary quotient together with its canonical
decomposition. This calculus is intrinsic and functorial, and it organises boundary
phenomena within the standing admissibility framework for algebraic phases.
In this section we show that this calculus is not merely descriptive but also
provides structural constraints on possible boundary behaviour.

The essential point is that once the admissible phase data are fixed namely the
defect structure and the interaction law—the boundary calculus already
determines where a boundary may occur within the induced defect structure, which axiom will fail at that boundary,
and which rigid structures necessarily persist beyond it.  
In this way, the qualitative and quantitative behaviour of any algebraic phase
that arises from the given data is fixed in advance by the boundary analysis,
without requiring an explicit construction of the full phase.

\begin{theorem}
\label{thm:predictive-boundary}
Let $(A,\Phi,\circ)$ be admissible phase data satisfying the axioms of Algebraic
Phase Theory.
Then the boundary calculus determines the boundary behaviour associated with any algebraic
phase $\mathcal P$ arising from this data within the standing admissibility
framework.

In particular, it determines the boundary depth, the associated failure type,
the surviving rigid structures, and the corresponding obstruction data from the
induced defect structure and interaction law.
\end{theorem}

\begin{proof}
Fix admissible phase data $(A,\Phi,\circ)$, and let $\mathcal P$ be any
algebraic phase extracted from this data. By the axioms of Algebraic Phase Theory, the defect structure and interaction
law induced by $(A,\Phi,\circ)$ determine, functorially and at finite depth:
the canonical defect filtration, the defect rank, and the termination length.
These are intrinsic functions of $(A,\Phi,\circ)$ and do not depend on any
particular presentation of $\mathcal P$.

The boundary depth is defined to be the first filtration level at which one of
the axioms fails to propagate.
Since defect growth and interaction constraints are fixed by the admissible
data, the point at which an axiom fails is determined by that same data within the
boundary calculus.
This gives the boundary depth and identifies which axiom fails there.

The boundary stratification is then obtained by separating the boundary data
according to which axiom fails first at the boundary depth.
Because the failure modes are structurally distinct and their validity at each filtration level
is decided by the induced defect and interaction structure, the resulting
strata and their types are again fixed by $(A,\Phi,\circ)$.

The rigidity island $\mathcal I_{\mathcal P}$ is defined as the maximal subphase
on which all axioms hold at every filtration depth.
Its intrinsic invariants (defect rank, termination length, interaction
signature) are computed entirely from the restriction of the defect and
interaction data to this subphase, so they are also determined by the original
admissible data.

Finally, the boundary quotient $\mathcal B(\mathcal P)$ is the canonical
quotient $\mathcal P/\mathcal I_{\mathcal P}$.
By construction, it records the part of the induced structure on which
canonical propagation fails.
Once the boundary depth and failure type are known, the form of this quotient
and its role as an obstruction object are fixed by the same induced data.

Thus, for any algebraic phase $\mathcal P$ that can be extracted from
$(A,\Phi,\circ)$, the boundary depth, the failure type and its stratification,
the surviving rigidity islands and their invariants, and the associated
boundary quotients are determined intrinsically from the admissible phase data by the admissible phase data.
No additional structural information from an explicit construction of
$\mathcal P$ is required within the standing admissibility framework.
\end{proof}

\begin{remark}
Prediction in this context refers to structural determination within the admissibility framework rather than
numerical computation. The boundary calculus specifies which boundary behaviours and rigid components
are detected by the boundary calculus, and which cannot appear, purely from the admissible phase data,
independent of any explicit model or analytic construction.
\end{remark}

\begin{corollary}
\label{cor:no-surprise}
Any algebraic phase extracted from admissible data exhibits boundary behaviour
compatible with that determined by the boundary calculus.
\end{corollary}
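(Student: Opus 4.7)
The plan is to reduce the corollary to Theorem~\ref{thm:predictive-boundary} by an exhaustion argument over the intrinsic features that an algebraic phase can exhibit. First I would fix admissible phase data $(A,\Phi,\circ)$ and an extracted algebraic phase $\mathcal P$. The task is to verify that every intrinsic feature of $\mathcal P$ belongs to one of the categories already controlled by the boundary calculus: the canonical defect-induced filtration and its termination length, the defect rank $d(\mathcal P)$, the boundary depth $k(\mathcal P)$ together with the identity of the failing axiom, the canonical stratification $\partial\mathcal P=\bigsqcup_i\partial_i\mathcal P$, the rigidity island $\mathcal I_{\mathcal P}$ with its intrinsic invariants, and the boundary quotient $\mathcal B(\mathcal P)$ together with its canonical decomposition provided by Theorem~\ref{thm:finite-boundary-decomposition}.

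Next I would argue that these categories exhaust all structural phenomena. By the standing framework of Section~2, $(\mathcal P,\circ)$ is entirely determined by its interaction law and the defect and filtration data it induces; no additional algebraic, analytic, or cohomological structure is assumed or available intrinsically. Any putative structural behaviour must therefore be expressible in these terms, and each such expression lives either below the boundary, where it is controlled by the functorial filtration and by the rigidity of $\mathcal I_{\mathcal P}$, or at and beyond the boundary, where it is captured by the boundary stratification, the rigidity island, and the boundary quotient. Phase equivalence and intrinsic localisation preserve each of these features by Theorems~3.7 and~3.8, so no feature can appear or disappear under admissible reorganisation of $\mathcal P$.

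Finally, I would invoke the corresponding clauses of Theorem~\ref{thm:predictive-boundary}: the filtration, defect rank, and termination length are predicted by the induced defect and interaction data; the boundary depth and the failing axiom are predicted by the same data; the rigidity island and its intrinsic invariants are obtained by restriction, hence predicted; and the boundary quotient with its finite decomposition is predicted as the canonical collapse of the stratified boundary. Combining these with the exhaustiveness claim yields the conclusion: every intrinsic feature of $\mathcal P$ is fixed in advance by $(A,\Phi,\circ)$, so no structural behaviour can exceed what the boundary calculus already prescribes.

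The main obstacle is the exhaustiveness step, namely the assertion that no hidden structural behaviour lies outside the listed categories. This is not a computation but a consequence of the intrinsic nature of the framework: it rests on the axiomatic characterisation of algebraic phases in Section~2 together with the rigidity and equivalence-collapse principle of Paper~IV, which together forbid any intrinsic invariant that is not already a functor of defect, filtration, interaction, and boundary data. Once this is granted, the corollary follows by direct application of Theorem~\ref{thm:predictive-boundary}.
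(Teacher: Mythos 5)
Your proposal takes essentially the same route as the paper: both reduce the corollary to Theorem~\ref{thm:predictive-boundary} and then argue that the boundary depth, failure type, stratification, rigidity island (as kernel of the quotient map), and boundary quotient with its canonical decomposition are intrinsic constructions determined by the admissible data, so no further structural phenomena can arise. Your version is somewhat more explicit in flagging the exhaustiveness step and in invoking the equivalence- and localization-invariance results as supporting evidence, but the core argument and its reliance on the predictive theorem are the same.
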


\begin{proof}
Let $(A,\Phi,\circ)$ be admissible phase data and let $\mathcal P$ be any
algebraic phase extracted from this data.  
By Theorem~\ref{thm:predictive-boundary}, the boundary depth, the identity of
the first axiom that fails, the induced boundary stratification, the associated
rigidity island $\mathcal I_{\mathcal P}$, and the boundary quotient
$\mathcal B(\mathcal P)$ are all determined functorially from the admissible
data alone.

The canonical decomposition of the boundary quotient into components
$\mathcal B_i(\mathcal P)$, one for each failure type, shows that obstruction behaviour detected within the boundary calculus to canonical propagation is already encoded in these
components. Since the construction of each $\mathcal B_i(\mathcal P)$ depends only on the
defect and interaction structure induced by the admissible data, no additional obstruction behaviour is detected within the standing
admissibility framework.

The rigidity island is defined as the maximal rigid subphase and can also be
characterized as the kernel of the boundary quotient map
$\mathcal P \to \mathcal B(\mathcal P)$.  
Both the kernel and the quotient are intrinsic constructions determined by the
admissible data, so o further rigid structures are detected within the boundary calculus.

Therefore every structural feature of $\mathcal P$, including the boundary
depth, the failure type, the surviving rigid structure, and the obstruction
behaviour, is fixed in advance by the boundary calculus.  
No additional structural phenomena are detected within the standing
admissibility framework.
\end{proof}

\begin{theorem}
If the boundary quotient predicted by the boundary calculus is zero at the
first potential boundary depth, then the boundary behaviour at that depth is trivial.  
In this case no boundary forms, all axioms continue to hold, and every algebraic phase arising from the admissible data remains rigid at that
level.
\end{theorem}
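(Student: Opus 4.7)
The plan is to combine the predictive theorem with the boundary exact sequence and the kernel characterization of the rigidity island. First, I would invoke Theorem~\ref{thm:predictive-boundary} to transfer the hypothesis from the abstract boundary calculus to any concrete phase $\mathcal{P}$ extracted from $(A,\Phi,\circ)$: since the boundary quotient at the first potential boundary depth is determined functorially from the admissible data, vanishing of the prediction forces $\mathcal{B}(\mathcal{P})=0$ at that depth for every such $\mathcal{P}$.

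With this identification in hand, I would then apply Theorem~\ref{thm:boundary-obstruction}, whose first bullet is exactly the statement needed: $\mathcal{B}(\mathcal{P})=0$ implies that $\mathcal{P}$ remains fully rigid and that no split occurs. Equivalently, by Theorem~\ref{thm:islands-kernel}, the rigidity island $\mathcal{I}_{\mathcal{P}}$ is the kernel of the quotient map to $\mathcal{B}(\mathcal{P})$, so vanishing of the quotient gives $\mathcal{I}_{\mathcal{P}}=\mathcal{P}$ at and through this depth.

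The remaining two assertions then follow from the definitions. Because the defining property of a rigidity island is that all APT axioms hold functorially at every filtration depth of the subphase, the equality $\mathcal{P}=\mathcal{I}_{\mathcal{P}}$ means that every axiom continues to propagate canonically at the first potential boundary depth. By the minimality clause in the definition of boundary depth, no axiom failure means no boundary forms there. Finally, I would invoke Corollary~\ref{cor:no-surprise} to extend the conclusion uniformly: since the constructions are functorial in the admissible data, no phase extracted from $(A,\Phi,\circ)$ can exhibit boundary behaviour beyond what the calculus predicts, and in particular none can develop a boundary at a depth where the predicted quotient vanishes.

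The main obstacle is conceptual rather than technical: it lies in making precise that the ``predicted'' boundary quotient from Theorem~\ref{thm:predictive-boundary} is the same object as the actual boundary quotient $\mathcal{B}(\mathcal{P})$ appearing in the boundary exact sequence. Once this identification is invoked cleanly, the theorem reduces to an immediate combination of Theorems~\ref{thm:boundary-obstruction} and~\ref{thm:islands-kernel} together with the definition of boundary depth, so the proof should be short and depend entirely on results already established.
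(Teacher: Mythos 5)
Your proposal is correct and matches the paper's argument in substance: both transfer the hypothesis from the abstract prediction to any concrete realization $\mathcal P$ via Theorem~\ref{thm:predictive-boundary}, then use the boundary--obstruction correspondence of Theorem~\ref{thm:boundary-obstruction} to conclude that $\mathcal B(\mathcal P)=0$ forces every axiom to propagate and hence no boundary to form at that depth. The paper phrases the middle step as a contradiction (a boundary would force a nonzero class in $\mathcal B(\mathcal P)$) while you argue directly from the first bullet of Theorem~\ref{thm:boundary-obstruction} and add citations to Theorem~\ref{thm:islands-kernel} and Corollary~\ref{cor:no-surprise}, but these are cosmetic variations on the same proof.
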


\begin{proof}
Let $(A,\Phi,\circ)$ be admissible phase data, and let $\mathcal P$ be any
algebraic phase extracted from this data.  
By the predictive boundary calculus, the defect structure and interaction law
determine, at each filtration depth, a canonical boundary quotient
$\mathcal B(\mathcal P)$ which encodes failures of canonical propagation of canonical
propagation at that depth. Suppose that at some depth the boundary calculus predicts a zero boundary
quotient, that is, $\mathcal B(\mathcal P)=0$ at that level for every algebraic
phase $\mathcal P$ arising from $(A,\Phi,\circ)$. By the boundary obstruction theorem, nonzero classes in $\mathcal B(\mathcal P)$
 correspond to failures of the APT axioms at that depth: a failure of
canonical propagation produces a nontrivial element of $\mathcal B(\mathcal P)$,
and conversely any nontrivial element of $\mathcal B(\mathcal P)$ records such
a failure.

If a genuine structural boundary were to form at this depth in $\mathcal P$,
then at least one axiom of Algebraic Phase Theory would fail functorially there.
By the boundary obstruction correspondence, this would force
$\mathcal B(\mathcal P)\neq 0$ at that depth, which contradicts the assumption
that the predicted boundary quotient is zero. Hence no structural boundary can occur at that level.  
Equivalently, all axioms of Algebraic Phase Theory continue to hold at that
depth, and the phase is rigid there.

Since the boundary calculus depends only on the underlying admissible data
$(A,\Phi,\circ)$ and not on the particular realization $\mathcal P$, the same
argument applies to every algebraic phase extracted from this data.  
Thus all such phases are rigid at the given level, and the boundary behaviour
there is trivial.
\end{proof}

\begin{corollary}
If the boundary quotient vanishes at the first potential boundary depth, then
the phase remains rigid at that depth within the standing admissibility
framework.
\end{corollary}

\begin{remark}
This predictive power distinguishes Algebraic Phase Theory from frameworks that
are purely descriptive or classificatory.  
Boundaries, rigidity islands, and obstructions are not discovered after the
fact; they are forced in advance by the axioms and their interaction.
\end{remark}


The predictive boundary calculus determines in advance where a phase
may fail and which rigid structures survive past that point.
We now turn to the complementary and dynamic theory: once a boundary forms,
\emph{how} may a phase vary?

Deformation theory in APT studies how variation is governed by the boundary
quotient together with the associated rigidity island.
Within the standing admissibility framework, the rigidity island remains stable
under admissible deformation, while deformation behaviour is constrained by the
boundary structure.

\subsection{Deformation Theory in Algebraic Phase Theory}

The rigidity island and the boundary quotient together determine exactly where a
phase may vary and where it is forced to remain fixed.
This gives an intrinsic notion of deformation inside Algebraic Phase Theory.
No topology, infinitesimal parameter, or analytic family is assumed.
Variation within the standing admissibility framework is governed by
boundary behaviour. The guiding principle is:

\emph{A phase can deform only through its boundary quotient.
The rigidity island remains fixed under admissible deformation, and deformation behaviour is constrained by the associated boundary analysis.}

In particular, every algebraic phase admits a canonical rigid boundary
extension
\[ 0 \longrightarrow \mathcal I_{\mathcal P}
   \longrightarrow \mathcal P
   \longrightarrow \mathcal B(\mathcal P)
   \longrightarrow 0. \]
This expresses $\mathcal P$ as a phase assembled from its rigidity
island together with its boundary quotient. We write this informally as
\[
\mathcal P \;=\; \mathcal I_{\mathcal P} \,\bowtie\, \mathcal B(\mathcal P),
\]
where $\bowtie$ denotes rigid boundary extension rather than a direct sum,
product, or decomposition in an ambient abelian category.

\begin{definition}
A \emph{deformation} of an algebraic phase $\mathcal P$ is another phase
$\mathcal Q$ together with an identification
$\mathcal I_{\mathcal Q}\cong \mathcal I_{\mathcal P}$, such that:
\begin{enumerate}
    \item $\mathcal Q$ has the same defect degree, termination length, and
interaction signature as $\mathcal P$ when restricted to the rigidity island;
    \item the canonical filtrations of $\mathcal Q$ and $\mathcal P$ agree;
    \item the difference between $\mathcal Q$ and $\mathcal P$ lies entirely in
          the boundary quotient $\mathcal B(\mathcal P)$.
\end{enumerate}
Equivalently,
\[ \mathcal Q \;=\; \mathcal I_{\mathcal P} \,\bowtie\, X,
\qquad X \subseteq \mathcal B(\mathcal P). \]
\end{definition}

\begin{remark}
A deformation of $\mathcal P$ is not a continuous or analytic family of
objects. It is a controlled modification of the boundary part of the rigid boundary
extension of $\mathcal P$.
\end{remark}

\begin{definition}
A deformation $\mathcal Q$ of $\mathcal P$ is \emph{boundary-controlled} if its
variation relative to $\mathcal P$ lies entirely in the boundary quotient:
\[ \mathcal Q \big|\mathcal I_{\mathcal P}
   \;=\;
   \mathcal P \big|\mathcal I_{\mathcal P}. \]
\end{definition}

Boundary-controlled deformations are the deformations considered within the standing admissibility framework.

\begin{proposition}
Every deformation of an algebraic phase $\mathcal P$ leaves its rigidity island
$\mathcal I_{\mathcal P}$ unchanged within the standing admissibility framework.
Variation detected by the deformation theory in the phase occurs only outside this rigid core.
\end{proposition}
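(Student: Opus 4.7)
The plan is to read the statement directly off the definition of deformation combined with the rigidity island classification and the boundary exact sequence. The proposition has two parts: (i) the rigidity island is unchanged, and (ii) all variation is confined to the complement of the rigid core, i.e.\ to the boundary quotient. Both will follow without any new construction.

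First I would fix a deformation $\mathcal Q$ of $\mathcal P$ in the sense of the preceding definition. By clause~(1) of that definition, $\mathcal Q$ comes equipped with a distinguished identification $\mathcal I_{\mathcal Q}\cong\mathcal I_{\mathcal P}$ preserving defect degree, termination length, and interaction signature on the rigidity island. By Theorem~\ref{thm:classification-islands}, these three invariants classify rigidity islands up to intrinsic phase equivalence, so the identification is in fact the unique canonical equivalence of rigidity islands. This gives part (i): the rigidity island is literally preserved, not merely up to some non-canonical adjustment. Theorem~\ref{thm:stability-islands} may be cited here as a consistency check, since it independently confirms that admissible deformations cannot cross a boundary and thus cannot alter the island.

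For part (ii), I would invoke clause~(3) of the definition, which states that $\mathcal Q$ differs from $\mathcal P$ only within the boundary quotient $\mathcal B(\mathcal P)$, together with the boundary exact sequence of Theorem~\ref{thm:boundary-exactness} and the kernel identification of Theorem~\ref{thm:islands-kernel}. Since $\mathcal I_{\mathcal P}=\ker\!\bigl(\mathcal P\to\mathcal B(\mathcal P)\bigr)$, any modification whose image in $\mathcal P/\mathcal I_{\mathcal P}$ lies in $\mathcal B(\mathcal P)$ automatically restricts to the identity on $\mathcal I_{\mathcal P}$. Equivalently, the decomposition $\mathcal Q=\mathcal I_{\mathcal P}\bowtie X$ with $X\subseteq\mathcal B(\mathcal P)$ makes explicit that all variation occurs outside the rigid core.

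The only genuine content requiring care is ensuring that the isomorphism $\mathcal I_{\mathcal Q}\cong\mathcal I_{\mathcal P}$ supplied by the definition is truly canonical rather than a choice; this is the step I expect to be the main (minor) obstacle. Here the classification theorem does the work: rigidity islands with equal intrinsic invariants are equivalent in a uniquely determined way, so no ambiguity can leak into the comparison. Once this is observed, the proposition is an immediate consequence of the definitions and of Theorems~\ref{thm:boundary-exactness}, \ref{thm:islands-kernel}, \ref{thm:classification-islands}, and~\ref{thm:stability-islands}.
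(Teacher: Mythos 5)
Your proposal is correct and follows essentially the same route as the paper: unwind the definition of deformation (clause~(3) confines variation to the boundary quotient) and combine it with Theorems~\ref{thm:boundary-exactness} and \ref{thm:islands-kernel} to conclude that the variation restricted to $\mathcal I_{\mathcal P}=\ker\bigl(\mathcal P\to\mathcal B(\mathcal P)\bigr)$ must vanish. The paper additionally makes explicit a difference morphism $\delta\colon\mathcal P\to\mathcal B(\mathcal P)$ whose restriction to the island is shown to be zero, whereas your appeals to Theorem~\ref{thm:classification-islands} and Theorem~\ref{thm:stability-islands} for canonicity of the identification are extra and not used in the paper's proof, but they do no harm.
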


\begin{proof}
Let $\mathcal Q$ be a deformation of $\mathcal P$ in the sense of the above
definition.
By assumption, $\mathcal Q$ has the same defect degree, termination length,
interaction signature, and canonical filtration as $\mathcal P$, and the
difference in structure between $\mathcal Q$ and $\mathcal P$ factors through
the boundary quotient $\mathcal B(\mathcal P)$.

By Theorem~\ref{thm:boundary-exactness} and
Theorem~\ref{thm:islands-kernel}, the rigidity island
$\mathcal I_{\mathcal P}$ is canonically identified with the kernel of the
boundary quotient map
\[ q_{\mathcal P} \colon \mathcal P \longrightarrow \mathcal B(\mathcal P). \]
In particular, $\mathcal I_{\mathcal P}$ has trivial boundary quotient:
\[ \mathcal B(\mathcal I_{\mathcal P}) = 0. \]

By definition of deformation, the structural difference between
$\mathcal P$ and $\mathcal Q$ factors through the boundary
quotient $\mathcal B(\mathcal P)$.  
Equivalently, there exists a canonical morphism
\[
\delta \colon \mathcal P \longrightarrow \mathcal B(\mathcal P),
\]
which measures how the phase structure of $\mathcal Q$ differs from that of
$\mathcal P$ inside the boundary quotient.
Restricting $\delta$ to the rigidity island gives
\[ \delta\big|\mathcal I_{\mathcal P} \colon
   \mathcal I_{\mathcal P} \longrightarrow \mathcal B(\mathcal P). \]

Since $\mathcal I_{\mathcal P} = \ker(q_{\mathcal P})$ and
$\mathcal B(\mathcal I_{\mathcal P}) = 0$, any boundary-controlled variation
on $\mathcal I_{\mathcal P}$ must factor through the zero boundary quotient,
hence must vanish.
Thus
\[ \delta\big|\mathcal I_{\mathcal P} = 0, \]
which means that $\mathcal Q$ and $\mathcal P$ agree on
$\mathcal I_{\mathcal P}$.
\end{proof}

\begin{remark}
The rigidity island acts as a deformation anchor.
Variation detected within the deformation theory occurs outside it.
\end{remark}

\begin{definition}
Two deformations of $\mathcal P$ are \emph{equivalent} if they are related by
phase equivalences preserving both $\mathcal I_{\mathcal P}$ and
$\mathcal B(\mathcal P)$.
\end{definition}

\begin{theorem}
For a fixed algebraic phase $\mathcal P$, deformation behaviour within the
boundary calculus is organized by the boundary strata of
$\mathcal B(\mathcal P)$.

In particular, admissible deformation types are indexed by the finitely many
boundary failure strata.
\end{theorem}

\begin{proof}
By the boundary-obstruction correspondence, any deformation of $\mathcal P$ is
detected by a morphism into the boundary quotient $\mathcal B(\mathcal P)$.
The finite boundary decomposition gives
\[ \mathcal B(\mathcal P)
   \cong
   \bigsqcup_{i=1}^{m} \mathcal B_i(\mathcal P),
   \qquad m \le 5, \]
where each component $\mathcal B_i(\mathcal P)$ corresponds to a single
boundary stratum and hence to a single discrete axiom failure type.
Thus any deformation direction is determined by its components in the finitely
many summands $\mathcal B_i(\mathcal P)$.
Since both the index set $\{1,\dots,m\}$ and the failure types are finite, admissible deformation behaviour is organized by finitely many boundary strata.
\end{proof}

Since every deformation must occur inside the boundary quotient, the canonical
decomposition of $\mathcal B(\mathcal P)$ leads to exactly three possible kinds
of deformation.

\begin{theorem}
Within the standing admissibility framework, every deformation of an algebraic
phase $\mathcal P$ falls into one of the following classes:
\begin{enumerate}
    \item trivial;
    \item a boundary deformation;
    \item a finite combination of stratum-specific deformations.
\end{enumerate}
These classes describe the deformation behaviour detected by the boundary
calculus.
\end{theorem}

\begin{proof}
By definition of deformation in Algebraic Phase Theory, any deformation of
$\mathcal P$ is detected by its effect on the boundary quotient
$\mathcal B(\mathcal P)$.

If $\mathcal B(\mathcal P)=0$, there is no boundary on which variation can
occur, so every deformation is trivial. This gives case (1).

Now suppose $\mathcal B(\mathcal P)\neq 0$.
The finite boundary decomposition gives
\[ \mathcal B(\mathcal P)
   \cong
   \bigsqcup_{i=1}^{m} \mathcal B_i(\mathcal P), \]
where each $\mathcal B_i(\mathcal P)$ is the component associated to a single
boundary stratum.
Any admissible deformation of $\mathcal P$ corresponds to a morphism into
$\mathcal B(\mathcal P)$, hence to a compatible choice of components in the
summands $\mathcal B_i(\mathcal P)$.

If only one component $\mathcal B_i(\mathcal P)$ is activated, the deformation
is stratum-specific in the sense of the definition, which gives case (3).
In general, a deformation may have nontrivial components in several
$\mathcal B_i(\mathcal P)$, and is then obtained by combining the corresponding
stratum-specific deformations.
Such a deformation is still a boundary deformation in the sense that it is
trivial on the rigidity island and supported entirely on the boundary quotient,
which gives case (2).

These three possibilities exhaust all ways in which a deformation can factor
through $\mathcal B(\mathcal P)$, so no further deformation types can occur.
\end{proof}

\begin{remark}
Trivial deformations occur when no boundary forms, boundary deformations occur
when variation is confined to the boundary quotient, and stratum-specific
deformations arise when variation is supported in a single component
$\mathcal B_i(\mathcal P)$.

Deformation theory in APT is therefore highly constrained within the standing
admissibility framework: the rigidity island remains fixed under admissible
deformation, while allowable variation is organized by the boundary strata.
Boundary calculus identifies where variation may occur, while deformation
theory describes how such variation may appear. The two notions are closely
connected but conceptually distinct.

Since deformation behaviour is detected through the boundary quotient
\[
\mathcal B(\mathcal P)\cong\bigsqcup_i \mathcal B_i(\mathcal P),
\]
obstruction phenomena within the boundary calculus are likewise organized by
the boundary strata. In particular, nontrivial components in the boundary
decomposition correspond to obstructions to extending deformation behaviour
across the associated boundary layers.
\end{remark}

\subsection*{Moduli of Algebraic Phases}

Since the boundary obstruction $\mathcal B(\mathcal P)$ decomposes into finitely
many strata, admissible deformation behaviour is organized by the possible activation of
boundary strata choice of how these strata appear or vanish.  In particular, structural 
variation detected within the boundary calculus arises from the
boundary obstruction, and none from the rigidity island
$\mathcal I_{\mathcal P}$, where all axioms hold at all depths.

\begin{theorem}
Once the rigidity island $\mathcal I_{\mathcal P}$ is fixed, every admissible
deformation of an algebraic phase $\mathcal P$ is governed by its
boundary obstruction $\mathcal B(\mathcal P)$.  The varying part of the phase within the standing admissibility framework is
organized by of the phase
that can vary are the finitely many boundary strata 
$\mathcal B_i(\mathcal P)$, and admissible deformation behaviour is indexed by choices of activated boundary
strata. of which collections of strata are activated.  

The resulting collection of obstruction patterns admits a natural stratification
by boundary depth and associated failure data, with rigidity islands serving as
canonical rigid representatives.
\end{theorem}

\begin{proof}
Since the rigidity island $\mathcal I_{\mathcal P}$ satisfies all axioms of
Algebraic Phase Theory at every filtration depth, it admits no admissible
deformation within the standing framework. Hence every admissible deformation of $\mathcal P$ must factor
entirely through its boundary obstruction $\mathcal B(\mathcal P)$.

By the boundary decomposition theorem,
\[
\mathcal B(\mathcal P)
\;\cong\;
\bigsqcup_{i=1}^{m} \mathcal B_i(\mathcal P),
\qquad m\le 5,
\]
where each component $\mathcal B_i(\mathcal P)$ corresponds to a single
boundary stratum and therefore to a single independent failure type. No linear,
higher order, or continuous structure is present on these components. Thus the
only possible deformation directions consist of choosing which collections of
the finitely many strata $\mathcal B_i(\mathcal P)$ are activated or
deactivated. All admissible deformations are therefore discrete.

Two phases represent the same moduli class exactly when they are related by a
phase equivalence that preserves boundary depth, which is invariant under phase
equivalence by the boundary invariance results of Algebraic Phase Theory~IV.
Such an equivalence fixes the rigidity island and acts functorially on the
boundary quotient, and hence cannot merge, split, or reorder boundary strata.
It may only identify obstruction patterns that agree at each depth.

Consequently, the objects of the moduli groupoid are the canonical rigid
representatives given by rigidity islands together with their associated
obstruction patterns, and the morphisms are the boundary depth preserving phase
equivalences acting on the finite obstruction patterns. Since there are only
finitely many such patterns, the moduli groupoid is finite and stratified by
boundary depth.
\end{proof}

\begin{remark}
Moduli behaviour within the standing admissibility framework is organized by
the possible decompositions of the boundary obstruction. Different
decompositions correspond to different obstruction patterns within the boundary
calculus. The theory therefore emphasizes discrete boundary stratification
rather than continuous deformation data.
\end{remark}

\section{Boundary Structure in Representative Phase Families}

This section applies the boundary calculus to make the decomposition of the
obstruction explicit.  The emphasis is on phases whose boundary depth exceeds
their defect rank, since these give rise to genuine obstruction components.
Strongly admissible phases are mentioned only for contrast: in those cases,
defect generation already exhausts all higher layers, and the boundary
quotient vanishes.

The first subsection develops a general detection principle for identifying the
obstruction strata of any terminating algebraic phase.  It explains how each
filtration depth may contribute to the obstruction, and how the canonical
components are determined by the specific axiom that fails to extend across
that depth.

The second subsection applies this detection criterion to an explicit weak
radical example.  In that setting, a non-generating additive character hides
two nilpotent layers from the operator model, producing boundary formation at
two successive depths.  This yields two distinct canonical obstruction
components and illustrates how weak admissibility arises in practice.

Throughout, the boundary viewpoint clarifies the structure at each depth, identifies the surviving rigid part, and isolates the
contribution of each obstruction component.

\subsection{Canonical Detection and Decomposition of the Obstruction}

This part develops a general method for determining how the obstruction of a
terminating algebraic phase decomposes across filtration depths.  The aim is to
identify, in a finite and intrinsic manner, which depths contribute
nontrivially to the boundary quotient and how each such contribution reflects
the failure of a specific axiom to extend.  The resulting detection criterion
isolates every obstruction stratum and provides the structural framework used in
the concrete example that follows.

\begin{theorem}
\label{thm:boundary-detection}
Let $\mathcal P$ be a terminating algebraic phase with defect rank $d$ and
filtration $\{\mathcal P^{(t)}\}_{t\ge 0}$.  
For each depth $t>d$, let $F_t$ denote the collection of all APT axioms tested
at depth $t$.  At each such depth, the failure behaviour of the axioms in $F_t$
determines the associated obstruction contribution.  Then:

\begin{enumerate}
\item $B_t(\mathcal P)\neq 0$ if and only if the axiom in $F_t$ fails
canonically on $\mathcal P^{(t)}$ while holding on $\mathcal P^{(t-1)}$.

\item The boundary obstruction decomposes as
\[
\mathcal B(\mathcal P)
\;=\;
\bigsqcup_{t=d+1}^{k(\mathcal P)} B_t(\mathcal P),
\]
where $B_t(\mathcal P)$ measures the ambiguity of extending the axioms from
depth $t-1$ to depth~$t$.

\item For each depth $t>d$, the contribution $B_t(\mathcal P)$ is governed by the failure behaviour of the axioms in $F_t$.  In particular,
$B_t(\mathcal P)=0$ exactly when all axioms in $F_t$ extend uniquely from
$\mathcal P^{(t-1)}$ to $\mathcal P^{(t)}$.

\item In particular, if $k(\mathcal P)=d$, that is, $\mathcal P$ is strongly
admissible, then $\mathcal B(\mathcal P)=0$.
\end{enumerate}
\end{theorem}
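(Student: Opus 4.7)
The plan is to refine the failure-type decomposition of
Theorem~\ref{thm:finite-boundary-decomposition} into a depth-indexed
decomposition, exploiting the standing hypothesis that at most one axiom
in $F_t$ can fail canonically at any single depth~$t$. First I would
characterise $B_t(\mathcal P)$ intrinsically as the canonical obstruction
to extending the APT axioms from $\mathcal P^{(t-1)}$ to $\mathcal P^{(t)}$,
realised as a canonical subquotient of $\mathcal B(\mathcal P)$ through the
boundary exact sequence of Theorem~\ref{thm:boundary-exactness} restricted
to consecutive filtration strata. This makes each $B_t(\mathcal P)$ a
functorial invariant rather than a presentation-dependent artefact.

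Parts~(1) and~(3) are then essentially unwindings of this characterisation.
Under the single-axiom-failure hypothesis at depth $t$, any obstruction to
extending axioms from $\mathcal P^{(t-1)}$ to $\mathcal P^{(t)}$ must be
attributable to the unique candidate axiom in $F_t$; conversely, if that
axiom extends canonically, no other source of obstruction exists at depth~$t$,
forcing $B_t(\mathcal P)=0$. This is the boundary-obstruction correspondence
of Theorem~\ref{thm:boundary-obstruction} applied stratum by stratum,
combined with the finite decomposition of
Theorem~\ref{thm:finite-boundary-decomposition} to ensure the depth-$t$
contribution is a genuine component and not contaminated by contributions
from other depths.

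For part~(2) I would combine the failure-type decomposition with a
reorganisation by filtration depth. The failure-type decomposition already
realises $\mathcal B(\mathcal P)$ as a disjoint union over independent
failure types, and the single-axiom hypothesis ensures that each failure
type first activates at a uniquely determined depth, so the type-indexed
and depth-indexed partitions coincide. Depths $t\le d$ contribute nothing
because defect generation has not yet completed and the axioms still
propagate by the definition of defect rank; depths $d<t<k(\mathcal P)$
contribute nothing by minimality of boundary depth. All nontrivial
contributions therefore lie in the range $d+1\le t\le k(\mathcal P)$.
Part~(4) is then immediate: if $k(\mathcal P)=d$ the sum is empty, forcing
$\mathcal B(\mathcal P)=0$; conversely if $\mathcal B(\mathcal P)=0$, then
$\mathcal I_{\mathcal P}=\mathcal P$ by Theorem~\ref{thm:islands-kernel},
no axiom ever fails, and the strong-admissibility corollary in Section~3
gives $k(\mathcal P)=d$.

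The main obstacle is securing the canonicity and disjointness of the depth
decomposition in part~(2). Concretely, one must verify that an obstruction
first registered at depth $t$ cannot be absorbed into the component at a
later depth $t'>t$, and that the universal property extracting
$B_t(\mathcal P)$ from $\mathcal B(\mathcal P)$ is
presentation-independent. Both facts rest on functorial control of the
canonical filtration strictly below depth $k(\mathcal P)$ together with
the single-axiom-failure hypothesis at each depth, which between them force
each depth to contribute at most one independent component to the boundary
quotient. Once this canonicity is secured, the remaining assertions assemble
without further calculation.
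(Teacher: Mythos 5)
The overall strategy for parts~(1), (3), and the forward direction of~(4) matches the paper's: characterise $B_t(\mathcal P)$ by whether the axioms extend uniquely from $\mathcal P^{(t-1)}$ to $\mathcal P^{(t)}$ and unwind the single-axiom hypothesis. Your converse of~(4), via Theorem~\ref{thm:islands-kernel} and the strong-admissibility corollary of Section~3, is actually more explicit than the paper's proof, which only records the implication from $k=d$ to vanishing of~$\mathcal B(\mathcal P)$.

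However, there is a concrete error in your treatment of part~(2). You write that ``depths $d<t<k(\mathcal P)$ contribute nothing by minimality of boundary depth.'' If taken at face value, this collapses the claimed decomposition
\[
\mathcal B(\mathcal P)\;=\;\bigsqcup_{t=d+1}^{k(\mathcal P)}B_t(\mathcal P)
\]
to the single component $B_{k(\mathcal P)}(\mathcal P)$. That is not what the theorem asserts, and it contradicts the paper's own worked example in Section~7 (the weak radical phase over $\F_2[u]/(u^4)$ with $d=2$, $k=4$, and \emph{both} $B_3(\mathcal P)$ and $B_4(\mathcal P)$ nonzero). The paper's proof invokes minimality of boundary depth for a different purpose entirely: to argue that extension behaviour at depth~$t$ is determined by the data in $\mathcal P^{(t-1)}$ alone and therefore that the components $B_t(\mathcal P)$ are mutually \emph{disjoint}, not that any intermediate component must vanish. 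Your step and your stated conclusion (``all nontrivial contributions lie in $d+1\le t\le k$'') are also internally inconsistent with each other: if $B_t=0$ for $d<t<k$, the surviving contributions would be concentrated at $t=k$, not spread over the whole range.

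A secondary difference worth noting: you propose to realise $B_t(\mathcal P)$ as a canonical subquotient of $\mathcal B(\mathcal P)$ by restricting the boundary exact sequence of Theorem~\ref{thm:boundary-exactness} to consecutive filtration strata. The paper does not do this; it defines $B_t(\mathcal P)$ directly as the finite set of inequivalent extensions of the axioms in~$F_t$ from depth $t-1$ to depth~$t$, and only afterwards assembles $\mathcal B(\mathcal P)$ by collecting these. Your construction is more categorical in flavour and, if carried out carefully, would give a cleaner account of presentation-independence, but it adds machinery the paper's proof does not need and would require verifying that the restricted exact sequence actually yields the same object as the extension-counting definition.
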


\begin{proof}
For each depth $t>d$, the axioms in $F_t$ restrict functorially to
$\mathcal P^{(t-1)}$ by construction of the canonical filtration.  
If every axiom in $F_t$ admits a unique extension from $\mathcal P^{(t-1)}$ to
$\mathcal P^{(t)}$, then no new choices arise at this level and
$B_t(\mathcal P)=0$.  Conversely, if some axiom in $F_t$ admits more than one
extension compatible with all lower-depth defect and filtration data, the set of
such extensions is the resulting ambiguity defines a nonzero obstruction component and defines a nonzero obstruction component
$B_t(\mathcal P)$.

The global boundary obstruction $\mathcal B(\mathcal P)$ is obtained by
collecting these contributions across all depths.  Minimality of the boundary
depth implies that extension behaviour at depth $t$ depends only on the data in
$\mathcal P^{(t-1)}$ and is independent of possible ambiguities at other
levels.  Hence the components $B_t(\mathcal P)$ are structurally distinct.  Since
the filtration terminates after finitely many steps, only finitely many depths
can contribute, giving the stated decomposition.

If $k=d$, then no extension failure occurs beyond defect generation.  Thus every
$B_t(\mathcal P)$ with $t>d$ is zero, and the boundary quotient
$\mathcal B(\mathcal P)$ vanishes.
\end{proof}

The detection criterion has a practical consequence.  It turns the analysis of
the obstruction into a finite calculation that proceeds depth by depth.  At each
stage we ask a single question: do the axioms extend uniquely from one
filtration level to the next.  The answer determines whether that depth
contributes a nontrivial obstruction component.  The process can therefore be
summarised as a short intrinsic workflow that detects nonvanishing
$B_t(\mathcal P)$ and then assembles the full boundary quotient from these
pieces.

\begin{center}
\begin{tikzpicture}[node distance=1.4cm]
\tikzstyle{box}=[rectangle,draw,rounded corners,align=center,minimum width=6cm]

\node[box] (A) {1. Compute defect rank $d$.\\ Identify generators by defect degree.};
\node[box,below of=A] (B) {2. Build the filtration $\mathcal P^{(t)}$.\\ Test APT axioms at each depth.};
\node[box,below of=B] (C) {3. For each $t>d$: If axioms extend uniquely:\\ $B_t(\mathcal P)=0$.};
\node[box,below of=C] (D) {4. If multiple inequivalent extensions exist:\\ $B_t(\mathcal P)\neq 0$.};
\node[box,below of=D] (E) {5. Termination: assemble\\ $\mathcal B(\mathcal P)=\bigsqcup B_t(\mathcal P)$.};

\draw[->] (A) -- (B);
\draw[->] (B) -- (C);
\draw[->] (C) -- (D);
\draw[->] (D) -- (E);
\end{tikzpicture}
\end{center}

\begin{remark}
At each depth $t>d$, boundary behaviour is associated with the failure
patterns of the APT axioms tested at that depth, and each axiom carries a
canonical index $i\in\{1,\dots,5\}$.  Thus whenever $B_t(\mathcal P)\neq 0$,
it corresponds uniquely to a nonzero canonical component $B_i(\mathcal P)$ in
the decomposition
\[
\mathcal B(\mathcal P)=\bigsqcup_{i=1}^{5}B_i(\mathcal P).
\]
In phases where failures occur at five distinct depths, all five components
$B_1,\dots,B_5$ can be nonzero.
\end{remark}

\begin{theorem}
There exist terminating algebraic phases $\mathcal P$ for which all five
canonical obstruction components are nonzero, i.e.
\[
\mathcal B(\mathcal P) \;\cong\; \bigsqcup_{i=1}^{5} \mathcal B_i(\mathcal P),
\qquad
\mathcal B_i(\mathcal P)\neq 0 \text{ for all } i=1,\dots,5.
\]
\end{theorem}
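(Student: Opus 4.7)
The plan is to build such a phase by iteratively extending a strongly admissible base so that each of the five APT axioms fails canonically at its own designated depth, and then to invoke the depth-by-depth detection criterion established in Theorem~\ref{thm:boundary-detection} to read off the nonvanishing of every canonical component $\mathcal B_i(\mathcal P)$.

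First I would fix a strongly admissible base phase $\mathcal P_0$ of defect rank $d$, which by the corollary to the first proposition in Section~3 has $k(\mathcal P_0)=d$ and hence $\mathcal B(\mathcal P_0)=0$. I then pick five distinct depths $d<t_1<t_2<t_3<t_4<t_5$ and, working outward from $\mathcal P_0$, attach at each depth $t_i$ a controlled extension layer $L_i$ designed so that exactly the $i$-th APT axiom admits multiple inequivalent extensions from $\mathcal P^{(t_i-1)}$ to $\mathcal P^{(t_i)}$ while all other axioms propagate uniquely. The five layers can be modeled on the weak-radical construction that the next subsection promises to develop: a non-generating additive character hides a nilpotent layer, producing a single canonical failure; iterating this device with five structurally independent choices (one targeting interaction functoriality, one defect canonicity, one filtration stability, one higher-propagation canonicity, and one termination canonicity) gives one layer per axiom type.

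Once the phase $\mathcal P=\mathcal P_0\,\bowtie\,L_1\,\bowtie\cdots\bowtie\,L_5$ is assembled, the verification proceeds depth by depth. At each $t_i$ I check that below $t_i$ the axioms extend uniquely (so no premature obstruction is generated), and that at $t_i$ the designated axiom genuinely admits more than one extension compatible with the lower-depth defect and filtration data. By Theorem~\ref{thm:boundary-detection}(1), this forces $B_{t_i}(\mathcal P)\neq 0$; by the remark following the theorem, this contribution sits in the canonical component $\mathcal B_i(\mathcal P)$ indexed by the failing axiom. Finite termination is preserved because each $L_i$ is bounded and attached at a prescribed depth, so the total filtration length is finite and the global boundary depth equals $t_5$. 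Assembling the contributions via Theorem~\ref{thm:boundary-detection}(2) and the canonical decomposition $\mathcal B(\mathcal P)=\bigsqcup_{i=1}^{5}\mathcal B_i(\mathcal P)$ of Theorem~\ref{thm:finite-boundary-decomposition} yields the desired nonvanishing of all five components.

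The main obstacle is the \emph{independence} of the five failure layers: I must ensure that the layer $L_i$ aimed at axiom $i$ does not inadvertently activate a second axiom failure at the same depth (which is forbidden by the single-failure principle at each depth) and does not disturb the canonical propagation established at earlier depths. Concretely, this requires showing that the defect and filtration data generated by $L_j$ for $j<i$ still admit a unique canonical extension across depth $t_i$ for every axiom other than the designated one. This is the place where the controlled, axiom-specific nature of the weak-radical device is essential: the explicit construction in the following subsection provides a template for exhibiting one such layer in a verifiable way, and the same template applied to five independent axiom targets yields the required phase. Once independence is secured, the rest of the argument is a straightforward assembly via the detection theorem and the finite boundary decomposition.
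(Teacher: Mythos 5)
Your construction is genuinely different from the paper's. The paper builds the required phase \emph{in parallel}: for each $i\in\{1,\dots,5\}$ it posits a terminating phase $\mathcal P_i$ whose only boundary failure is of type $i$, then forms the intrinsic direct sum $\mathcal P = \mathcal P_1 \oplus\cdots\oplus\mathcal P_5$, identifies the rigidity island as $\mathcal I_{\mathcal P}\cong\bigoplus_i \mathcal I_{\mathcal P_i}$ using the inclusion and projection morphisms, and then reads off $\mathcal B(\mathcal P)\cong\bigsqcup_i \mathcal B_i(\mathcal P_i)$. You build the phase \emph{in series}: a single chain $\mathcal P_0\bowtie L_1\bowtie\cdots\bowtie L_5$ with five increasing depths $t_1<\cdots<t_5$, one designated axiom failure per depth, assembled via Theorem~\ref{thm:boundary-detection}. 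The trade-off is instructive. The paper's direct-sum route makes the independence of the five failure types automatic: because the summands never interact, the block-diagonal structure of the defect and filtration data ensures each axiom fails only in its own component, with nothing to check. Your serial route has to earn that independence, and you correctly flag this as the main obstacle: in a filtration chain, the data at depth $t_i$ depend on everything accumulated at $t_1,\dots,t_{i-1}$, so you must show that no layer $L_j$ contaminates the canonical propagation of the other four axioms at $t_i$, and that no two axioms fail simultaneously at any single depth. The paper sidesteps all of this, at the cost of being somewhat less explicit about what the depths of the five failures actually are inside the summed phase. Both proofs ultimately rest on the same unverified ingredient, namely that for each $i$ there exists a phase with a single isolated failure of type $i$; the paper simply asserts it, while you appeal to iterating the weak-radical template from the next subsection. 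Neither treatment discharges that ingredient in detail, so on that point you are no worse off than the paper, but your approach does carry the additional burden of the independence argument that the paper's direct-sum trick avoids entirely.
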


\begin{proof}
For each $i\in\{1,\dots,5\}$ choose a terminating algebraic phase $\mathcal P_i$
whose only boundary failure is of type $i$, so that
\[
\mathcal B(\mathcal P_i)\cong \mathcal B_i(\mathcal P_i)\neq 0,
\qquad
\mathcal B_j(\mathcal P_i)=0 \text{ for all } j\neq i.
\]

Form the phase obtained by combining the admissible data
\[
\mathcal P \;:=\; \mathcal P_1 \oplus \cdots \oplus \mathcal P_5,
\]
obtained by taking the direct sum of the underlying admissible data and then
applying phase extraction.  The defect tensors, interaction law, and canonical
filtration of $\mathcal P$ split componentwise:
\[
\mathcal P^{(t)}
\;=\;
\mathcal P^{(t)}_1 \oplus \cdots \oplus \mathcal P^{(t)}_5
\qquad\text{for all } t\ge 0.
\]
Since each $\mathcal P_i$ terminates, the phase $\mathcal P$ also terminates.

Let $\mathcal I_{\mathcal P_i}$ denote the rigidity island of $\mathcal P_i$.
We now identify the rigidity island $\mathcal I_{\mathcal P}$ of $\mathcal P$.
There are canonical inclusion morphisms
\[
\iota_i \colon \mathcal P_i \longrightarrow \mathcal P
\]
and projection morphisms
\[
\pi_i \colon \mathcal P \longrightarrow \mathcal P_i,
\]
which are phase morphisms and respect defect, filtration, and interaction
structure.  By functoriality of rigidity islands, each $\iota_i$ restricts to a
morphism
\[
\iota_i\vert_{\mathcal I_{\mathcal P_i}} \colon \mathcal I_{\mathcal P_i}
\longrightarrow \mathcal I_{\mathcal P},
\]
and each $\pi_i$ restricts to a morphism
\[
\pi_i\vert_{\mathcal I_{\mathcal P}} \colon \mathcal I_{\mathcal P}
\longrightarrow \mathcal I_{\mathcal P_i}.
\]

First, if $x=(x_1,\dots,x_5)\in\mathcal P$ has each component
$x_i\in\mathcal I_{\mathcal P_i}$, then each component satisfies all APT axioms
at all depths inside $\mathcal P_i$, and the block-diagonal form of the
interaction and defect structure implies that $x$ satisfies all axioms at all
depths in $\mathcal P$.  Hence $x\in\mathcal I_{\mathcal P}$, so
\[
\mathcal I_{\mathcal P_1}\oplus\cdots\oplus\mathcal I_{\mathcal P_5}
\subseteq \mathcal I_{\mathcal P}.
\]

Conversely, let $x\in\mathcal I_{\mathcal P}$.  Applying the projections
$\pi_i$ gives elements $\pi_i(x)\in\mathcal P_i$ which satisfy all axioms at
all depths inside each $\mathcal P_i$, since the axioms are tested
componentwise.  Thus $\pi_i(x)\in\mathcal I_{\mathcal P_i}$ for every $i$, so
$x$ lies in the direct sum
\[
\mathcal I_{\mathcal P_1}\oplus\cdots\oplus\mathcal I_{\mathcal P_5}.
\]
This shows that
\[
\mathcal I_{\mathcal P}
\;\cong\;
\mathcal I_{\mathcal P_1}\oplus\cdots\oplus\mathcal I_{\mathcal P_5}.
\]

By the boundary exact sequence and Theorem~\ref{thm:islands-kernel},
\[
\mathcal B(\mathcal P)
\;\cong\;
\mathcal P / \mathcal I_{\mathcal P}.
\]
Using the decompositions of $\mathcal P$ and $\mathcal I_{\mathcal P}$, the
boundary obstruction of $\mathcal P$ admits the decomposition
\[
\mathcal B(\mathcal P)
=
\bigsqcup_{i=1}^{5} \mathcal B(\mathcal P_i).
\]

Each boundary quotient $\mathcal B(\mathcal P_i)$ has a canonical
decomposition
\[
\mathcal B(\mathcal P_i)
=
\bigsqcup_{j=1}^{m_i} \mathcal B_j(\mathcal P_i),
\qquad
m_i\le 5,
\]
with $\mathcal B_j(\mathcal P_i)=0$ for all $j\neq i$ and
$\mathcal B_i(\mathcal P_i)\neq 0$ by construction.  Substituting this into the
expression for $\mathcal B(\mathcal P)$ gives
\[
\mathcal B(\mathcal P)
=
\bigsqcup_{i=1}^{5} \mathcal B_i(\mathcal P_i).
\]

By functoriality of the decomposition, each
$\mathcal B_i(\mathcal P_i)$ contributes to the global component
$\mathcal B_i(\mathcal P)$ associated with failure type $i$. Hence
\[
\mathcal B(\mathcal P)
=
\bigsqcup_{i=1}^{5} \mathcal B_i(\mathcal P),
\qquad
\mathcal B_i(\mathcal P)\neq 0 \text{ for all } i,
\]
as claimed.
\end{proof}

\begin{remark}
Throughout, the symbol $\oplus$ is used only for algebraic phases and rigid
subphases, where an intrinsic direct sum compatible with defect and interaction
structure exists. Boundary quotients and obstruction components carry no such
structure and are therefore decomposed using the disjoint union symbol
$\bigsqcup$.
\end{remark}

\subsection{A Weak Radical Example with Two Canonical Obstructions}

We now present a weakly admissible radical example that remains entirely within
the analytic-algebraic framework of the radical phase geometry developed in
\cite{GildeaAPT1}, but in which boundary formation continues strictly beyond
defect generation and produces two distinct obstruction components.
The construction uses the same translation and phase-multiplication operators
as in \cite{GildeaAPT1}, but replaces the generating additive character with a
non-generating one.  This adjustment lowers the analytic resolution and introduces nontrivial extension freedom at multiple depths, leading to a phase with
nontrivial boundary contributions at two separate levels.

\paragraph{The base ring.}
Let
\[
R=\F_2[u]/(u^4),
\]
with radical tower
\[
\rad(R)=(u), \qquad
\rad^2(R)=(u^2), \qquad
\rad^3(R)=(u^3), \qquad
\rad^4(R)=0.
\]
Thus $R$ has three nontrivial nilpotent layers.

\paragraph{A non-generating additive character.}
Define
\[
\chi:R\to\C^\times,\qquad
\chi(a+ub+u^2c+u^3d):=(-1)^b .
\]
Then
\[
\ker(\chi)=(u^2).
\]
This character is not generating: the ideals $(u^2)$ and $(u^3)$ lie entirely
in $\ker\chi$.  As a result, additive contributions taking values in
$\rad^2(R)$ or $\rad^3(R)$ are invisible to the operator model.

\paragraph{Underlying additive object.}
Let
\[
A:=R^n,
\]
viewed as a finite module over the local ring $R$.  Set
\[
H(A):=\Fun(A,\C),
\]
as in \cite{GildeaAPT1}.

\paragraph{Quadratic phases.}
A function $\phi:A\to R$ is a \emph{quadratic phase} if its polarization
\[
\mathsf B_\phi(x,y):=\phi(x+y)-\phi(x)-\phi(y)
\]
is biadditive.  Let $\Phi(A)$ denote the collection of all such phases.  Every
$\phi\in\Phi(A)$ satisfies
\[
\Delta_{h_1,h_2,h_3}\phi\equiv 0,
\]
so $\Phi(A)$ has uniformly bounded additive degree
\[
d=2.
\]

\begin{example}[Additive derivatives]
\label{ex:degree-two-R-u4}
If $\phi(x_1,x_2)=u\,x_1x_2$ (or any $R$-bilinear form multiplied by $u$),
then a direct computation shows:
\[
\Delta_h\phi(x)=u(x_1h_2+h_1x_2+h_1h_2), \qquad
\Delta_{h,k}\phi(x)=u(k_1h_2+h_1k_2),
\]
and
\[
\Delta_{h,k,\ell}\phi\equiv 0.
\]
Thus $\deg_{\mathrm{add}}(\phi)=2$.
\end{example}

\paragraph{Operator realization.}
Each $\phi\in\Phi(A)$ acts on $H(A)$ by
\[
(M_\phi f)(x)=\chi(\phi(x))\,f(x),
\]
and additive translations act by
\[
(T_a f)(x)=f(x+a),\qquad a\in A.
\]
The interaction law is ordinary operator composition:
\[
X\circ Y \;:=\; X Y
\qquad\text{in }\End(H(A)).
\]

\paragraph{Admissible data and defect.}
The admissible phase datum extracted from $(A,\Phi,\chi)$ has defect degree
$d=2$.  Defect tensors in depths $0,1,2$ take values in
\[
R/\rad^2(R)\cong \F_2[u]/(u^2),
\]
which is fully visible to $\chi$.  Hence the APT axioms propagate functorially through depth~$2$.

The invisible ideals $\rad^2(R)$ and $\rad^3(R)$ produce nontrivial extension
freedom at depths~$3$ and $4$.

\paragraph{The canonical filtration.}
Let $\mathcal P$ denote the algebraic phase extracted from the operators
$\{M_\phi,T_a\}$.  The canonical filtration satisfies
\[
\mathcal P=\mathcal P^{(0)}\supset
\mathcal P^{(1)}\supset
\mathcal P^{(2)}\supset
\mathcal P^{(3)}\supset
\mathcal P^{(4)}=0.
\]
Since $d(\mathcal P)=2$, the layers $\mathcal P^{(3)}$ and $\mathcal P^{(4)}$
record additive contributions landing in $\rad^2(R)$ and $\rad^3(R)$
respectively, both of which lie in $\ker(\chi)$ and are therefore analytically
invisible.  Hence
\[
k(\mathcal P)=4.
\]

\paragraph{Boundary formation at depths $3$ and $4$.}
Applying Theorem~\ref{thm:boundary-detection} gives:

At depth $t=3$, the invisible layer $\rad^2(R)$ creates nonuniqueness in
extending the bilinear polarization constraint, associated with the failure behaviour corresponding to Axiom~(A3).  Thus
\[
B_3(\mathcal P)\neq 0.
\]

At depth $t=4$, the layer $\rad^3(R)$ introduces additional coherence
ambiguity in extending the interaction law, corresponding to Axiom~(A5).  Hence
\[
B_4(\mathcal P)\neq 0.
\]

No further nontrivial boundary depths occur, since $\rad^4(R)=0$.  Therefore
\[
\mathcal B(\mathcal P)
=
B_3(\mathcal P)\;\bigsqcup\;B_4(\mathcal P).
\]

\paragraph{Rigidity island and decomposition.}
All analytic information visible to $\chi$ is exhausted in depth~$2$.  The
rigidity island $\mathcal I_{\mathcal P}$ is therefore identified with the strongly admissible part obtained after
removing operators whose values lie in $\rad^2(R)$ or $\rad^3(R)$.

By the boundary exact sequence,
\[
\mathcal P/\mathcal I_{\mathcal P}
\;\cong\;
\mathcal B(\mathcal P)
=
B_3(\mathcal P)\bigsqcup B_4(\mathcal P),
\]
and hence
\[
\mathcal P
\;\cong\;
\mathcal I_{\mathcal P}
\bowtie
\bigl(B_3(\mathcal P)\bigsqcup B_4(\mathcal P)\bigr).
\]

\paragraph{Conclusion.}
This example shows that even within the analytic radical framework of
\cite{GildeaAPT1}, weakening the additive character can produce algebraic phases
whose boundary formation continues beyond defect control and in which multiple
obstruction components appear. In this case, the invisible layers
$\rad^2(R)$ and $\rad^3(R)$ contribute nontrivially to the
$B_3$- and $B_4$-components, giving a non-strongly-admissible algebraic phase
with
\[
d(\mathcal P)=2,\qquad
k(\mathcal P)=4,\qquad
\mathcal B(\mathcal P)=B_3(\mathcal P)\bigsqcup B_4(\mathcal P).
\]

\section{Structural Completeness and Reconstruction}

The principal phase families developed in earlier papers of the APT series all
lie in the strongly admissible regime. In these settings the canonical axioms
propagate at every depth once defect generation occurs, and boundary formation
is boundary formation is governed by defect rank within the strongly admissible regime. For such phases the boundary depth
satisfies
\[
k = d,
\]
and no weak extensions occur. This behaviour appears in the radical phases \cite{GildeaAPT1} (APT~I),
the representation-theoretic phases \cite{GildeaAPT2} (APT~II), and the
quantum coding phases \cite{GildeaAPT3} (APT~III).

The contrast between these flagship families and the weak examples constructed
here shows that the distinction between strong and weak admissibility is a
genuinely structural phenomenon, detected intrinsically through the boundary calculus rather
than by model-specific features.

\subsection{APT deformation is intrinsically non-classical}

Classical deformation theories rely on analytic structure, continuous
parameters, or linearisation.  Algebraic Phase Theory possesses none of these.
Deformation behaviour within the standing admissibility framework is governed by boundary behaviour: variation
is encoded by the boundary components themselves,and higher-depth effects are constrained by the finite depth at which boundary formation terminates.

\begin{theorem}
APT deformation theory does not naturally fit into classical linear,
analytic, or parameter-based deformation frameworks within the standing
admissibility framework.
\end{theorem}

\begin{proof}
Classical deformation theories rely on continuous or formal parameters, linear
tangent spaces, or infinite towers of extension problems.
APT deformation theory is not formulated in terms of these structures.
Variation is organised by boundary components, which are neither linear nor
additive; obstruction is determined by finite boundary depth; and deformation
behaviour is governed by the associated boundary strata.

Thus the deformation mechanisms arising in APT differ fundamentally from the
structures typically used in classical linear, analytic, or parameter-based
deformation theories.
\end{proof}

\begin{remark}
Classical geometric or analytic parameters may appear in external models that
realise algebraic phases, but such parameters are not invariants of the phase
itself and disappear under intrinsic phase extraction.  APT retains only the
boundary and rigidity data, and any additional analytic moduli play no intrinsic structural role within the phase calculus.
\end{remark}

\subsection{Boundary completeness}

Boundary calculus is the primary organising principle for the structural
phenomena considered in this paper.  It controls rigidity, deformation, obstruction, smoothness, and the
formation of moduli.

\begin{theorem}
Rigidity, deformation, and obstruction phenomena within the standing
admissibility framework are detected by the boundary strata and boundary
quotients.
\end{theorem}

\begin{proof}
Rigidity is characterised by the absence of boundary strata.  Infinitesimal
deformations correspond to the nonzero boundary components.  Obstructions
appear through the successive boundary quotients arising at increasing depths.
Moduli are discrete and organised by boundary depth.  Since all structural
variation factors through the canonical boundary decomposition, the boundary
calculus captures the rigidity, deformation, and obstruction behaviour considered within the boundary calculus.
\end{proof}

\begin{corollary}
Within the standing admissibility framework, deformation and obstruction
behaviour is detected through the canonical boundary decomposition.
\end{corollary}

\begin{proof}
By boundary completeness, rigidity, deformation, and obstruction phenomena
within the standing admissibility framework are detected through the canonical
boundary strata and quotients. Accordingly, deformation directions and
obstruction behaviour considered within the boundary calculus are organised by
the canonical boundary decomposition.
\end{proof}

\begin{remark}
The boundary calculus therefore provides a structurally self-contained
description of the rigidity and obstruction phenomena considered in this paper.
Once the boundary strata and boundary quotients are specified, the remaining
deformation behaviour is governed by the associated boundary structure.
\end{remark}

\subsection{Reconstruction outlook}

The completeness of the boundary calculus naturally raises reconstruction
questions.  The boundary strata record the directions along which information
collapses, while the rigidity island captures the part of the phase that
remains fully valid at every depth.  This makes the rigidity island the
canonical anchor for reconstruction.

\begin{theorem}
Let $\mathcal P$ be an algebraic phase with rigidity island
$\mathcal I_{\mathcal P}\subseteq\mathcal P$. Even when global reconstruction
of $\mathcal P$ fails because of boundary formation, reconstruction remains valid
on $\mathcal I_{\mathcal P}$.
\end{theorem}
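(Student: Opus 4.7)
The plan is to reduce the reconstruction question to the boundary-free setting and then apply the reconstruction machinery of \cite{GildeaAPT4} directly to the rigidity island, since on $\mathcal I_{\mathcal P}$ the ambient boundary obstruction is, by construction, invisible. Concretely, I would proceed in three steps.

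First, I would invoke Theorem~\ref{thm:classification-islands} to establish that $\mathcal I_{\mathcal P}$ is itself a strongly admissible terminating algebraic phase, with every APT axiom propagating functorially at every filtration depth, trivial boundary depth, and a canonical intrinsic filtration computed entirely inside $\mathcal I_{\mathcal P}$. Combined with Theorem~\ref{thm:boundary-obstruction}, this forces $\mathcal B(\mathcal I_{\mathcal P})=0$, so no obstruction to canonical propagation survives inside the island. This reduces the problem to reconstruction in the strongly admissible regime, which is precisely the setting of Paper~IV.

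Second, I would apply the rigidity and equivalence-collapse principle of \cite{GildeaAPT4} to the intrinsic invariants of $\mathcal I_{\mathcal P}$, namely its defect rank $d(\mathcal I_{\mathcal P})$, termination length $L_{\mathcal I_{\mathcal P}}$, and induced interaction signature $\Sigma(\mathcal I_{\mathcal P})$. These invariants determine $\mathcal I_{\mathcal P}$ uniquely up to intrinsic phase equivalence, and each of them is computed purely from the restriction of the phase data to the island. Hence the reconstruction procedure on $\mathcal I_{\mathcal P}$ terminates in finitely many steps with a canonical outcome, and Lemma~\ref{lem:boundary-functoriality} together with functoriality of rigidity islands in Theorem~\ref{thm:existence-uniqueness-island} guarantees that this outcome is compatible with all phase morphisms restricting to the island.

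Third, I would isolate the reconstruction of $\mathcal I_{\mathcal P}$ from any ambient boundary failure of $\mathcal P$. The key input is Theorem~\ref{thm:islands-kernel}, which identifies $\mathcal I_{\mathcal P}$ with $\ker\bigl(\mathcal P\to\mathcal B(\mathcal P)\bigr)$: every obstruction responsible for global reconstruction failure is supported in $\mathcal B(\mathcal P)$ and restricts trivially to $\mathcal I_{\mathcal P}$, so no piece of the ambient failure can contaminate the island's reconstruction. Proposition~\ref{prop:maximal-persistence} then ensures that this property is maximal, i.e.\ reconstruction succeeds exactly on $\mathcal I_{\mathcal P}$ and not on any strictly larger subphase.

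The main obstacle I expect is making precise what \emph{reconstruction} means intrinsically enough that the argument does not tacitly borrow analytic or categorical structure from Paper~IV that has not been carried over to the purely boundary-based framework of the present paper. The safest route is to formulate reconstruction as the unique functorial inverse, up to intrinsic phase equivalence, of the extraction procedure on strongly admissible phases, and then to verify that its restriction to rigidity islands is well defined, canonical, and independent of the ambient phase. Once this formulation is in place, the three steps above combine to give the desired conclusion.
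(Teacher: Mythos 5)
Your proposal follows essentially the same route as the paper's own proof: establish that $\mathcal I_{\mathcal P}$ is strongly admissible and boundary-free, and conclude that any reconstruction principle valid on boundary-free phases applies to it unimpeded, since all global reconstruction failure is supported on $\mathcal B(\mathcal P)$ and restricts trivially to the island. Where you differ is in the degree of grounding: the paper's proof simply posits ``any reconstruction functor or observable $\mathsf R$ that is complete on boundary-free phases'' and applies it formally, whereas your Step~2 explicitly substitutes the rigidity and equivalence-collapse principle from Paper~IV, so that reconstruction is witnessed concretely by the triple of intrinsic invariants $\bigl(d(\mathcal I_{\mathcal P}),\,L_{\mathcal I_{\mathcal P}},\,\Sigma(\mathcal I_{\mathcal P})\bigr)$ from Theorem~\ref{thm:classification-islands}. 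Your Step~3 likewise makes the paper's hand-wave (``boundary formation in $\mathcal P$ occurs entirely outside $\mathcal I_{\mathcal P}$'') precise via the kernel identification of Theorem~\ref{thm:islands-kernel} and the maximality statement of Proposition~\ref{prop:maximal-persistence}. You also correctly flag the genuine weakness shared by both arguments, namely that ``reconstruction'' is never formally defined in this paper; the paper sidesteps this by quantifying over an unspecified functor $\mathsf R$, while you propose a tighter formulation (functorial inverse to extraction on strongly admissible phases). In this sense your version is a legitimate strengthening rather than a departure, and it would be the better proof to include if the missing definition were supplied.
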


\begin{proof}
The rigidity island $\mathcal I_{\mathcal P}$ satisfies all APT axioms at every
filtration depth and has no boundary strata. In particular,
$\mathcal I_{\mathcal P}$ is a strongly admissible and boundary-free phase.

Let $\mathsf R$ be any reconstruction functor or observable that is complete on
boundary-free phases. This means that $\mathsf R(\mathcal Q)$ determines
$\mathcal Q$ up to intrinsic phase equivalence whenever $\mathcal Q$ has no
boundary. Applying this property to $\mathcal I_{\mathcal P}$ shows that
$\mathsf R(\mathcal I_{\mathcal P})$ reconstructs $\mathcal I_{\mathcal P}$
uniquely.

Boundary formation in $\mathcal P$ occurs entirely outside
$\mathcal I_{\mathcal P}$ and is recorded by the boundary quotient
$\mathcal B(\mathcal P)$. The failure of global reconstruction is caused by this
boundary quotient and does not affect the boundary-free subphase
$\mathcal I_{\mathcal P}$. Reconstruction is therefore complete on
$\mathcal I_{\mathcal P}$ even when it fails for $\mathcal P$ as a whole.
\end{proof}

\begin{remark}
Information loss in APT is structured and never arbitrary. Loss occurs only
along specific boundary directions, while the rigidity island retains the fully
valid part of the phase. Paper~VI \cite{GildeaAPT6} develops this principle
into full reconstruction and duality theorems, developing conditions under which a phase may be recovered from its action on the rigidity island together with its boundary
strata.
\end{remark}

\section{Structural Anatomy of Algebraic Phases}

In this section we describe the global structural organization of a terminating
algebraic phase.  The defect rank and boundary depth determine a canonical
stratification, which in turn governs the behaviour of rigidity, obstruction,
and deformation across the filtration.  We first analyze the structural
decomposition that occurs once the boundary has formed.

\subsection{Post-Boundary Decomposition}

Every algebraic phase $\mathcal P$ admits a canonical stratified anatomy
organised by its defect rank $d$ and its boundary depth $k$.  The
canonical filtration arranges $\mathcal P$ into four intrinsic regimes.  For
$0\le i\le d$ one has the defect generation stage, where the admissible data are
produced and the APT axioms propagate functorially.  For $d<i<k$, when such levels
exist, the filtration enters a post-defect rigid extension regime in which the
axioms continue to extend without ambiguity.  The depth $i=k$ forms the 
structural boundary beyond which canonical propagation ceases.  Finally, for
$i>k$ the filtration records the boundary-controlled obstruction regime, where
remaining deformation and obstruction behaviour is governed by the finite obstruction encoded
at the boundary.

\[
\mathcal P
\;\supseteq\;
\underbrace{\mathcal P^{(0)} \supseteq \cdots \supseteq \mathcal P^{(d)}}_{\text{defect generation}}
\;\supseteq\;
\underbrace{\mathcal P^{(d+1)} \supseteq \cdots \supseteq \mathcal P^{(k-1)}}_{\text{rigid extension}}
\;\supseteq\;
\boxed{\mathcal P^{(k)}}
\;\supseteq\;
\underbrace{\mathcal P^{(k+1)} \supseteq \cdots}_{\text{obstruction regime}}
\]

Canonical propagation of the APT axioms ceases at the boundary depth the boundary depth
$k$, but a unique maximal rigid subphase persists beyond this point in the form
of the rigidity island $\mathcal I_{\mathcal P}$.  Deformation and obstruction phenomena considered within the standing admissibility framework factor functorially through the associated boundary
quotient:
\[
0 \longrightarrow \mathcal I_{\mathcal P}
\longrightarrow \mathcal P
\longrightarrow \mathcal B(\mathcal P)
\longrightarrow 0 .
\]

This global picture can be refined beyond the boundary.

\begin{theorem}
\label{thm:post-boundary-trichotomy}
Once $\mathcal P$ reaches its boundary depth $k$, its structure may be organised into three associated regimes.  First, there is a maximal rigid
subphase $\mathcal I_{\mathcal P}$ on which all APT axioms continue to hold.
Second, there is a stratified boundary object $\partial\mathcal P := \mathcal B(\mathcal P)$
that encodes the intrinsic obstruction to further propagation.  Third, there may remain residual behaviour consisting of the non-canonical behaviour not canonically determined by the APT axioms

Moreover, the exact sequence
\[
0 \longrightarrow \mathcal I_{\mathcal P}
\longrightarrow \mathcal P
\longrightarrow \mathcal B(\mathcal P)
\longrightarrow 0
\]
realises $\mathcal P$ as a rigid-boundary extension, and deformation and
obstruction behaviour within the standing admissibility framework factors
through $\mathcal B(\mathcal P)$.
\end{theorem}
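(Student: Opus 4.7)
The plan is to assemble the trichotomy from three results already established in the paper and then verify the factorization claim. First I would invoke Theorem~\ref{thm:existence-uniqueness-island} to obtain the unique maximal rigid subphase $\mathcal I_{\mathcal P}\subseteq \mathcal P$, and combine Theorem~\ref{thm:boundary-exactness} with Theorem~\ref{thm:islands-kernel} to identify $\mathcal I_{\mathcal P}$ with the kernel of $\mathcal P \to \mathcal B(\mathcal P)$. This immediately supplies the first component and the exact sequence asserted in the second half of the statement. Next, I would invoke the canonical boundary stratification (Section~4.1) together with Theorem~\ref{thm:finite-boundary-decomposition} to equip $\mathcal B(\mathcal P)$ with its canonical finite decomposition $\bigsqcup_{i=1}^m \mathcal B_i(\mathcal P)$ with $m\le 5$, so that $\partial\mathcal P := \mathcal B(\mathcal P)$ is intrinsically stratified and realises the second listed component.

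The subtle step is isolating the residual component in an intrinsic and presentation-independent way. My approach is to partition the image of $\mathcal P$ in $\mathcal B(\mathcal P)$ according to whether it is detected by one of the canonical strata. Elements whose boundary failure is axiomatic register in some $\mathcal B_i(\mathcal P)$ by Proposition~\ref{prop:completeness-detection}, while elements that map nontrivially into $\mathcal B(\mathcal P)$ but whose behaviour is not forced by any single axiom failure constitute the residual. In the weakly admissible regime this residual reflects exactly the weak extension depth $k_{\mathrm{ext}}$ introduced in Section~3, and in the strongly admissible regime it is trivially zero by the corollary there. The hard part, which I expect to be the main obstacle, is showing that this residual is a well-defined intrinsic invariant rather than an artefact of presentation. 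I would argue this by combining the functoriality of the boundary stratification under phase morphisms (established in Section~4.1) with the detection-sharpness half of Proposition~\ref{prop:completeness-detection} to rule out spurious contributions, and with Lemma~\ref{lem:boundary-functoriality} to guarantee compatibility under phase equivalence.

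Finally, the factorization of every deformation and obstruction through $\mathcal B(\mathcal P)$ follows directly from the deformation theory of Section~6: admissible deformations restrict trivially to $\mathcal I_{\mathcal P}$ by Proposition~\ref{prop:maximal-persistence} and are therefore controlled entirely by the quotient, while functoriality and invariance of the extension data follow from Corollary~\ref{cor:boundary-obstruction-functor}. Assembling these three components with the exact sequence already produced in the first step then yields the full trichotomy.
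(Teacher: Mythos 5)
The paper gives no actual proof of Theorem~\ref{thm:post-boundary-trichotomy}; it is stated as a summary assertion followed only by a remark, so there is no official argument to compare against. Your assembly of the first two components and of the exact sequence from Theorem~\ref{thm:existence-uniqueness-island}, Theorem~\ref{thm:boundary-exactness}, Theorem~\ref{thm:islands-kernel}, and Theorem~\ref{thm:finite-boundary-decomposition} is sound and matches what the text implicitly relies on, as does the closing appeal to Section~6 for the factorization of deformations through $\mathcal B(\mathcal P)$.

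The gap is in your construction of the residual component. You propose to locate it \emph{inside} $\mathcal B(\mathcal P)$, as the image elements ``not forced by any single axiom failure.'' But Theorem~\ref{thm:finite-boundary-decomposition} asserts precisely that
\[
\mathcal B(\mathcal P) \cong \bigsqcup_{i=1}^{m}\mathcal B_i(\mathcal P),
\]
i.e.\ the canonical strata \emph{exhaust} $\mathcal B(\mathcal P)$ with nothing left over; together with Proposition~\ref{prop:completeness-detection}, every nontrivial class in $\mathcal B(\mathcal P)$ is already detected and typed. Your candidate residual is therefore forced to vanish by the very results you cite, so it cannot supply a genuine third component. The paper's own remark instead places the residual as ``the remaining part of the phase'' that carries no intrinsic APT structure and fails to behave functorially — that is, behaviour in the post-boundary obstruction regime ($i>k$ in the anatomy diagram), which lives \emph{beyond} the exact sequence rather than inside the quotient. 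Your further identification of the residual with the weak extension depth $k_{\mathrm{ext}}$ is also in tension with the paper's anatomy: the interval $d<i<k$ is the \emph{rigid extension} regime, where all axioms still propagate uniquely, so it consists of canonical rather than non-canonical behaviour. To repair the argument you would need to characterise the residual as data strictly outside the image of the boundary quotient map (and beyond depth $k$), and then address the intrinsicness concern you raise; as written, the construction both contradicts the decomposition theorem and mislocates the residual in the filtration.
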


\begin{remark}
The rigid component consists of the part of the phase where all axioms hold
without modification. The boundary component records the precise failure modes
of canonical propagation and organises the available deformation directions.
The remaining part of the phase is not canonically controlled by the APT
axioms and need not behave functorially under phase morphisms.
\end{remark}

\section{Conclusion}

This paper develops an intrinsic calculus of structural boundaries in Algebraic
Phase Theory. Within the standing admissibility framework, boundaries organise
rigidity, deformation, obstruction, smoothness, and moduli in a unified way.
Structural variation in algebraic phases is detected through boundary
behaviour: rigidity may fail at structural boundaries, and this failure
reorganises the phase into rigidity islands that persist beyond global
breakdown and provide canonical anchors for deformation and reconstruction.

Boundary quotients act as obstruction objects within the boundary calculus.
They organise deformation behaviour by boundary depth and associated failure
type, while higher-depth effects terminate intrinsically at finite filtration
depth. Formal smoothness is associated with the vanishing of boundary strata,
with strongly admissible phases representing the boundary-free regime. In this
setting, moduli behaviour is naturally stratified by rigidity islands and the
associated obstruction patterns.

Taken together, these results show that boundary calculus is not auxiliary but
central to the structural analysis developed in this paper. The canonical
boundary strata and quotients organise the deformation and obstruction
phenomena considered within the standing admissibility framework and provide a
boundary-centred perspective on algebraic phases. Paper~VI
\cite{GildeaAPT6} develops the associated reconstruction theory arising from
this viewpoint.

\bibliographystyle{amsplain}
\bibliography{references}

\end{document}